\newtheorem{theorem}{Theorem}[section]
\newtheorem{proposition}[theorem]{Proposition}
\newtheorem{corollary}[theorem]{Corollary}
\newtheorem{remark}[theorem]{Remark}
\newtheorem{lemma}[theorem]{Lemma}
\theoremstyle{definition}
\newcommand{\cF}{\mathcal{F}}
\newcommand{\bR}{\mathbb{R}}
\newcommand{\p}{\partial}
\newcommand{\gd}{\nabla}
\newcommand{\lp}{\Delta}
\DeclareMathOperator{\vol}{vol}
\newcommand{\nc}{\newcommand}
\nc{\on}{\operatorname}
\nc{\ve}{\varepsilon}
\nc{\area}{\on{Area}}
\nc{\tr}{\on{{tr}}}
\nc{\la}{\langle}
\nc{\rg}{\rangle}
\nc{\ric}{\on{Ric}}
\begin{document}
\title{Rigidity Results Involving Stabilized Scalar Curvature}
\author{Yipeng Wang}
\address{Columbia University \\ 2990 Broadway \\ New York, NY 10027 \\ USA}
\begin{abstract}
    We establish a rigidity theorem for Brendle and Hung's recent systolic inequality, which involves Gromov's notion of \(T^{\rtimes}\)-stabilized scalar curvature. Our primary technique is the construction of foliations by free boundary weighted constant mean curvature hypersurfaces, enabling us to generalize several classical scalar curvature rigidity results to the \(T^{\rtimes}\)-stabilized setting. Additionally, we develop a monotone quantity using Ricci flow coupled with a heat equation, which is essential for rigidity analysis.
\end{abstract}

\maketitle
\section{Introduction}
In this article, we explore several rigidity questions arising in the context of scalar curvature. Our main result is a rigidity theorem that builds upon the recent systolic inequality established by Brendle and Hung \cite{Brendle-Hung}. This inequality applies to compact manifolds with nonnegative scalar curvature in a suitably weak sense.
\begin{theorem}{\label{thm:free-boundary}}
    Let $(M^n, g)$ be a compact, connected, orientable Riemannian manifold of dimension $n \leq 7$ with nonempty boundary $\partial M$. Denote by $\eta$ the outward-pointing unit normal vector along $\partial M$. Let $\varphi$ be a smooth function defined on $M$. Suppose there is a smooth map $\xi: \partial M \to S^1$, as well as a smooth map $(\theta_1, \ldots, \theta_{n-2}): M \to T^{n-2}$, such that $
(\xi,\theta_1,\cdots,\theta_{n-2}):\p M\to S^1\times T^{n-2}$
has non-zero degree. Let $\Xi$ be the pullback of the volume form on $S^1$ via $\xi$, and let $\Theta_i$ be the pullback of the volume form on $S^1$ via $\theta_i$. Define
\[
\sigma(M,g):=\inf\{\mathcal{H}^1(\alpha):\alpha\text{ is a smooth, closed curve on }\partial M\text{ with }\int_{\alpha}\Xi\ne 0\}.
\]
Assume that
    \[
    -2\Delta_M\varphi-|\nabla^M\varphi|^2+R_M\geq 0
    \]
    at point point in $M$. Then we have
    \[
    \inf_{\partial M}(H_{\partial M} + \langle \nabla^M \varphi, \eta \rangle) \cdot \sigma(M,g) \leq 2\pi.
    \]
    Moreover, equality occurs if and only if $\varphi$ is constant on $M$ and the universal cover of $M$ is isometric to $B^2(r)\times \mathbb{R}^{n-2}$ with the standard product metric for some $r>0$.
\end{theorem}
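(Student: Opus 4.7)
The inequality itself is Brendle and Hung's \cite{Brendle-Hung}, so I focus on the rigidity assertion. Writing $h := \inf_{\partial M}(H_{\partial M} + \langle \nabla^M \varphi, \eta\rangle)$ and assuming $h\cdot\sigma(M,g) = 2\pi$, my plan is to retrace the slicing used by Brendle-Hung, show that each slice must be geometrically extremal, and assemble the resulting rigidity into an isometric product structure on the universal cover.

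Brendle-Hung's derivation of the inequality proceeds by inductively slicing $M$ using the torus coordinates $\theta_1, \ldots, \theta_{n-2}$ --- or, equivalently, by a sequence of free boundary weighted $\mu$-bubbles compatible with these coordinates --- producing a $2$-dimensional disk $\Sigma^2 \subset M$ whose boundary $\partial \Sigma^2 \subset \partial M$ is a closed curve $\alpha$ with $\int_\alpha \Xi \neq 0$, and which carries a weighted Gauss curvature and a weighted boundary geodesic curvature inherited from the scalar-curvature and boundary-mean-curvature hypotheses on $(M, g)$. A weighted Gauss-Bonnet computation on $\Sigma^2$, together with the definition of $\sigma(M,g)$, yields $h\cdot\sigma(M,g) \leq 2\pi$. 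In the equality case I would argue that every step of this reduction saturates: the weighted Gauss curvature vanishes on $\Sigma^2$, $\varphi$ is constant on $\Sigma^2$, $\alpha$ has length exactly $\sigma(M,g)$ with constant geodesic curvature equal to $h$, and at each prior stage the slicing map produces a totally geodesic foliation with parallel unit normal and isometric leaves. Concretely, saturation of the stability inequality at each level forces the corresponding Jacobi lapse to be constant, which converts the slicing into an honest Riemannian submersion.

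Unwinding the saturation upward through the slicing yields a Riemannian submersion $M \to \Sigma^2$ with flat totally geodesic fibers diffeomorphic to $T^{n-2}$. By a de Rham/O'Neill splitting argument, the universal cover factors as $\widetilde M \cong \Sigma^2 \times \R^{n-2}$; from the 2D equality analysis one has $\Sigma^2 \cong B^2(r)$ with $r = \sigma(M,g)/(2\pi)$. A separate use of the saturated weighted scalar-curvature identity $-2\Delta_M\varphi - |\nabla^M\varphi|^2 + R_M \equiv 0$, combined with the fact that $\varphi$ is constant on each leaf of the foliation, then propagates constancy of $\varphi$ from an individual slice to all of $M$.

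The step I expect to be hardest is the saturation analysis in the inductive slicing: showing that each stage of the $\mu$-bubble reduction forces not just a sharp local inequality but a rigid product structure, with the weight $\varphi$, the free boundary condition, and the various torus projections all interacting cleanly. The free boundary Robin terms and the $\varphi$-weighted integrations by parts have to be coordinated carefully at each level, and the final step of promoting $\varphi$ from constant-on-the-disk to globally constant on $M$ will require a separate Bochner or maximum-principle input along the established foliation.
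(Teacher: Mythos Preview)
Your outline has the right shape --- equality forces saturation of every inequality in the slicing --- but there is a real gap in the step ``saturation of the stability inequality at each level forces the corresponding Jacobi lapse to be constant, which converts the slicing into an honest Riemannian submersion.'' The Brendle--Hung slicing hands you a single nested chain $M = \Sigma_0 \supset \Sigma_1 \supset \cdots \supset \Sigma_{n-2}$, and saturation tells you that each $\Sigma_k$ is totally geodesic in $\Sigma_{k-1}$ and that the first eigenfunction $u_k$ of the weighted Jacobi operator is constant. But this is infinitesimal rigidity at one hypersurface; it does not by itself produce a foliation of $\Sigma_{k-1}$, let alone a Riemannian submersion $M \to \Sigma^2$ with $T^{n-2}$ fibers as you claim.

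The paper closes this gap with the Bray--Brendle--Neves / Ambrozio mechanism your proposal omits. One argues by induction on $n$, and at the top level one constructs via the implicit function theorem an actual foliation $\{\Sigma_t\}_{|t|<\delta}$ of a neighborhood of the minimizing slice $\Sigma = \Sigma_1$ by free-boundary weighted-\emph{CMC} (not minimal) hypersurfaces, so that $\mu(t) := H_{\Sigma_t} + \langle \nabla^M\varphi, \nu_{\Sigma_t}\rangle$ is constant on each leaf. The crucial step is then to show $\mu'(t) \le 0$: if $\mu'(t_0) > 0$, the lapse $f_{t_0}$ satisfies a strict Jacobi inequality, and feeding $v = e^{\varphi} f_{t_0}$ through the Gauss-type identity and the systolic inequality applied to $\Sigma_{t_0}$ itself yields $\sigma(\Sigma_{t_0}, g|_{\Sigma_{t_0}}) < 2\pi \le \sigma(M,g)$, a contradiction. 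Monotonicity of $\mu$ together with weighted-area-minimality of $\Sigma$ forces every $\Sigma_t$ to be minimizing as well, so the single-slice rigidity (via the induction hypothesis in dimension $n-1$) applies leaf by leaf; this gives $\nabla^M\varphi = 0$ and a local Riemannian product near $\Sigma$, which a continuation argument extends to a covering $\Sigma \times \mathbb{R} \to M$. Constancy of $\varphi$ falls out of the leaf analysis directly, so your proposed separate Bochner/maximum-principle step is not needed.
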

The inequality in Theorem \ref{thm:free-boundary} was recently proved by Brendle--Hung \cite{Brendle-Hung}, extending prior results by Lucas Ambrozio \cite{Ambrozio} in the three-dimensional setting. An analogue of Theorem \ref{thm:free-boundary} for closed manifolds is the systolic inequality established by Zhu \cite{Zhu}, which generalizes the rigidity theorem of Bray--Brendle--Neves \cite{BBN} in dimension three. For additional scalar curvature rigidity results involving minimal surfaces in 3-manifolds, we refer to \cite{Cai-Galloway}, \cite{Micallef-Moraru}, and \cite{Nunes}.

Brendle and Hung’s approach relies on the minimal slicing argument developed by Schoen and Yau \cite{SY-3manifold}. This technique is closely related to the torical symmetrization methods discussed in \cite{GL-1}, \cite{Fischer-Colbrie-Schoen}, and \cite{Gromov-GAFA}.

The quantity $-2\Delta_M\varphi-|\nabla^M\varphi|^2+R_M$ appearing in Theorem \ref{thm:free-boundary} is closely related to the notion of $T^{\rtimes}-$stabilized scalar curvature introduced by Gromov \cite{Stabilized}. The idea is to perform an ``infinite dimensional torical symmetrization" of $M$. Let $N$ be a positive integer, and consider $\tilde{M}=M\times T^N$, where $T^N$ is an $N$-dimensional torus  equipped with a flat metric $g_{T^N}$. Define a warped product metric on $\tilde{M}$ by 
$$\tilde{g}=g+e^{\frac{2}{N}\varphi}g_{T^N}.$$ Under this construction, the scalar curvatures of $(\tilde{M},\tilde{g})$ and $(M,g)$ are related by
\[
R_{\tilde{M}}=R_M-2\Delta_M\varphi-\frac{N+1}{N}|\nabla^M\varphi|^2.
\]
As $N\to\infty$, , one can heuristically interpret $-2\Delta_M\varphi-|\nabla^M\varphi|^2+R_M$ as the scalar curvature of an ``infinite-dimensional" Riemannian manifold. Moreover, if we take $\varphi$ to be a constant, this expression reduces to the standard scalar curvature on $M$. Hence, the assumption in Theorem \ref{thm:free-boundary} can be viewed as a weaker notion of nonnegative scalar curvature on $M$.

In the literature, the quantity \( -2\lp_M \varphi - |\nabla^M \varphi|^2 + R_M \) is also known as the weighted scalar curvature or the \(P\)-scalar curvature. This terminology arises because this quantity can be viewed as the scalar curvature of the weighted manifold \((M, g, e^{\varphi} \, d\vol_g)\). A crucial application of this concept appears in the study of Ricci flow \cite{perelman2002entropyformularicciflow}. In this setting, the corresponding Hilbert-Einstein action on the weighted manifold is referred to as Perelman's \(\cF\)-functional:
\[
    \cF(M, g, -\varphi) := \int_M \left( -2\lp_M \varphi - |\nabla^M \varphi|^2 + R_M \right) e^{\varphi} \, d\vol_g.
\]
Perelman observed that along the Ricci flow, if \(\varphi\) satisfies a backward parabolic equation, then the \(\cF\)-functional is monotonically increasing (see Remark \ref{rmk:Perelman}).

A similar argument to that used in the proof of Theorem \ref{thm:free-boundary} for closed manifolds also extends Zhu's result \cite{Zhu} to the context of stabilized scalar curvature, as follows:
\begin{theorem}{\label{thm:Zhu}}
Let $(M^n,g)$ be a closed, connected, orientable Riemannian manifold with $n\le 7$, and $\varphi$ be a smooth function on $M$. Suppose there is a smooth map $\omega:M\to S^2$, as well as a smooth map $(\theta_1,\cdots,\theta_{n-2}):M\to T^{n-2}$, such that $(\omega,\theta_1,\cdots,\theta_{n-2}):M\to S^2\times T^{n-2}$ has non-zero degree. Let $\Omega$ be the pull-back of the volume form on $S^2$ via $\omega$, and let $\Theta_i$ be the pullback of the volume form on $S^1$ via $\theta_i$. Define
$$\mathscr{A}(M,g):=\inf\{\mathcal{H}^2(\Sigma):\Sigma\text{ is a smooth surface of }M\text{ with }\int_{\Sigma}\Omega\ne 0\}.$$
    Then we have
\[
\inf_M(-2\Delta_M\varphi-|\nabla^M\varphi|^2+R_M)\cdot\mathscr{A}(M,g)\le 8\pi.
\]
Moreover, equality holds if and only if $\varphi$ is constant on $M$ and the universal cover of $M$ is isometric to $S^2(r)\times \mathbb{R}^{n-2}$ with the standard product metric for some $r>0$.
\end{theorem}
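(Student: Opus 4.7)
The plan is to follow the template used for Theorem~\ref{thm:free-boundary}, adjusted for the closed-manifold setting (no free boundary, hence no boundary mean-curvature term). The inequality should come from an iterated weighted minimal slicing in the spirit of Schoen--Yau and Zhu, and the rigidity should follow from constructing a foliation of a neighborhood of the extremal surface by weighted constant-mean-curvature $2$-spheres, extended globally by a continuation argument.

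For the inequality, first I would perform $n-2$ successive dimension reductions using the toral factors $\theta_1,\ldots,\theta_{n-2}$. At each stage I replace the current ambient manifold by a weighted area minimizer in the Poincar\'e dual homology class of the corresponding $\Theta_i$, and equip it with a conformally rescaled metric together with an updated weighting function chosen so that the lower bound on the $P$-scalar curvature $-2\Delta\varphi - |\nabla\varphi|^2 + R$ is preserved in one lower dimension. This propagation mirrors the warped-product calculation $R_{\tilde M} = R_M - 2\Delta_M \varphi - \frac{N+1}{N}|\nabla^M\varphi|^2$ underlying the $T^\rtimes$-stabilization. After $n-2$ reductions, I obtain a closed $2$-dimensional minimizer $\Sigma$ on which the restriction $\omega|_\Sigma$ has nonzero degree, so that $\int_\Sigma \Omega \ne 0$ and $\mathscr{A}(M,g) \le \mathcal{H}^2(\Sigma)$. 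The weighted second variation identity on $\Sigma$, combined with Gauss--Bonnet and the degree information (which supplies a spherical Euler-characteristic contribution of $4\pi$ per unit degree), should produce the bound
\[
\inf_M\bigl(-2\Delta_M\varphi - |\nabla^M \varphi|^2 + R_M\bigr) \cdot \mathscr{A}(M,g) \le 8\pi.
\]

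For rigidity, assume equality holds. Then every inequality in the chain is saturated: the extremal surface $\Sigma$ is a round $2$-sphere of some radius $r$, $\varphi$ is constant along $\Sigma$, the weighted Jacobi operator on $\Sigma$ has vanishing first eigenvalue with a positive eigenfunction, and $\Sigma$ realizes $\mathscr{A}(M,g)$. Following the approach of Bray--Brendle--Neves~\cite{BBN} and the parallel argument for Theorem~\ref{thm:free-boundary}, I would use the implicit function theorem to construct a local foliation $\{\Sigma_t\}$ of a neighborhood of $\Sigma$ by weighted constant-mean-curvature $2$-spheres satisfying $\int_{\Sigma_t}\Omega = \int_\Sigma \Omega$. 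Saturation of the systolic bound forces every leaf to also realize $\mathscr{A}(M,g)$, to be totally geodesic, and to be isometric to $\Sigma$, with $\varphi$ constant along each leaf. This yields a local isometric splitting near $\Sigma$. An open/closed continuation argument extends the foliation as far as possible; compactness of $M$ together with passage to the universal cover then identifies $\tilde M$ with $S^2(r) \times \mathbb{R}^{n-2}$.

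The main obstacle I anticipate is completing the rigidity so as to force $\varphi$ to be globally constant rather than merely leafwise constant: the warped-product perspective a priori permits $\varphi$ to depend nontrivially on the transverse variables even after the metric has split. This is precisely where the monotone quantity for Ricci flow coupled with a heat equation announced in the abstract should be the decisive tool, ruling out any nonconstant transverse profile of $\varphi$ once local splitting is in hand. A secondary difficulty is careful bookkeeping: the successive conformal rescalings accumulated across the $n-2$ reduction steps must combine consistently so that the rigidity obtained at the surface level unfolds into rigidity of the original metric $g$ on $M$ and of the original function $\varphi$.
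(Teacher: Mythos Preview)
Your inequality argument is correct and matches the paper: the iterated weighted minimal slicing (Proposition~\ref{prop:closed-slicing}) followed by Gauss--Bonnet on the terminal surface is exactly what is done in Theorem~\ref{thm:stabilized-Zhu}.

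Your rigidity plan, however, has a structural gap for $n>3$. You propose to foliate a neighborhood of the terminal $2$-sphere $\Sigma$ in $M^n$ by a one-parameter family of weighted CMC $2$-spheres. But $\Sigma$ has codimension $n-2$ in $M$, so a one-parameter family cannot fill an open set, and the implicit function theorem for the weighted mean curvature operator does not produce such a foliation directly. The paper avoids this by organizing the rigidity as an \emph{induction on the dimension}: one works with a single codimension-one hypersurface $\Sigma^{n-1}\subset M$ that is weighted area-minimizing, applies the induction hypothesis to conclude that its universal cover is $S^2(1)\times\mathbb{R}^{n-3}$ and that the induced weight is constant (Lemma~\ref{lem:inductive-rigidity}), and only then builds a one-parameter foliation by \emph{hypersurfaces} $\Sigma_t^{n-1}$ via Lemma~\ref{lem:higher-foliation}. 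This is also what makes your ``secondary difficulty'' of unwinding the accumulated conformal rescalings disappear: the induction handles one rescaling at a time.

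Your anticipated main obstacle---that $\varphi$ might be leafwise constant but vary transversally---is not a real obstacle, and the Ricci flow monotone quantity is \emph{not} used in the proof of Theorem~\ref{thm:Zhu}. Once the hypersurface $\Sigma_t$ is shown to be totally geodesic (so $H_{\Sigma_t}=0$) and weighted minimal (so $H_{\Sigma_t}+\langle\nabla^M\varphi,\nu_{\Sigma_t}\rangle=0$), one immediately gets $\langle\nabla^M\varphi,\nu_{\Sigma_t}\rangle=0$; combined with $\nabla^{\Sigma_t}\varphi=0$ this gives $\nabla^M\varphi=0$ pointwise on each leaf, hence on the whole foliated neighborhood (see Lemma~\ref{lem:Jacobi-operator} and Corollary~\ref{cor:leaf-rigidity}). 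The Ricci flow argument in Appendix~\ref{sec:RF} is used only for Theorem~\ref{thm:generalized-Geroch-spin}, where the minimal-slicing machinery is unavailable.
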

The positive mass theorem and its rigidity in the context of stabilized scalar curvature have been recently studied by Baldauf and Ozuch \cite{Baldauf-Ozuch} and Law, Lopez, and Santiago \cite{Law-Lopez-Santiago} using the Dirac operator method, as well as by Chu and Zhu \cite{Chu-Zhu} employing a weighted minimal slicing argument. It would also be interesting to investigate whether other classical rigidity results in scalar curvature can be generalized to this weaker setting. One of the earliest significant results concerning scalar curvature rigidity trace back to the work of Gromov and Lawson, as well as Schoen and Yau, on the Geroch conjecture.

\begin{theorem}[Gromov--Lawson \cite{GL-1}, \cite{GL-2}; Schoen--Yau \cite{SY-3manifold}]\label{thm:Geroch} Let $g$ be a Riemannian metric on the torus $T^n$. Then $\inf_{T^n} R_g \leq 0$. Moreover, equality holds if and only if $g$ is flat. \end{theorem}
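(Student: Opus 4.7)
The plan is to prove Theorem \ref{thm:Geroch} by induction on $n$, via the minimal slicing technique of Schoen--Yau \cite{SY-3manifold}, which parallels the arguments underlying Theorems \ref{thm:free-boundary} and \ref{thm:Zhu}. The base case $n=2$ is immediate from Gauss--Bonnet: if $R_g \geq 0$ on $T^2$ then $\int_{T^2} K \geq 0$, while $\int_{T^2} K = 2\pi\chi(T^2)=0$, so $K \equiv 0$ and $g$ is flat.

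For the inductive step in dimensions $3 \leq n \leq 7$, we first establish the inequality by contradiction: assume $R_g > 0$ everywhere on $T^n$. We minimize $(n-1)$-dimensional area in the homology class Poincar\'e dual to $d\theta_n$; since $n-1\leq 6$, the minimizer $\Sigma^{n-1}$ is a smooth closed hypersurface, and the projection $T^n\to T^{n-1}$ onto the first $n-1$ factors restricts to a degree-one map $\Sigma\to T^{n-1}$. Combining the stability of $\Sigma$ with the Gauss equation yields a positive function $u$ on $\Sigma$ solving a Schr\"odinger-type inequality involving $R_g$, $R_\Sigma$, and $|A|^2$. For $n\geq 4$, the conformal change $\hat g = u^{4/(n-3)} g_\Sigma$ produces a metric on $\Sigma$ with strictly positive scalar curvature, contradicting the induction hypothesis. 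For $n=3$, testing stability with the constant function, substituting via the Gauss equation, and invoking Gauss--Bonnet on $\Sigma$ (whose genus is positive because it maps to $T^2$ with nonzero degree) forces $\int_\Sigma R_g\leq 0$, again a contradiction.

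For the rigidity statement, suppose $\inf_{T^n} R_g = 0$; we want to show $g$ is flat. Reprising the above computation on a minimizer $\Sigma$ shows that every inequality must be saturated: $A_\Sigma\equiv 0$, the first stability eigenvalue vanishes with a constant positive eigenfunction, and $R_g\equiv 0$ along $\Sigma$. Following the CMC foliation method of Cai--Galloway \cite{Cai-Galloway} and Bray--Brendle--Neves \cite{BBN}, we construct a smooth one-parameter foliation of a tubular neighborhood of $\Sigma$ by constant mean curvature hypersurfaces $\Sigma_t$; an ODE argument on $H(t)$, combined with the nonnegativity of $R_g$, forces each $\Sigma_t$ to be totally geodesic and to realize the minimum in its homology class. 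Arguing inductively that each leaf inherits a flat metric, we conclude that the ambient metric splits as a Riemannian product $dt^2 + g_0$ on this neighborhood. A standard open-closed continuity argument then globalizes the splitting to the universal cover, yielding the desired flatness.

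The main obstacle lies on the rigidity side rather than in establishing the inequality: one must upgrade pointwise identities along a single slice $\Sigma$ to a genuine Riemannian product structure on an open set, and then propagate that structure across the universal cover. This requires careful topological verification that each nearby leaf in the CMC foliation inherits the nonzero-degree map to $T^{n-1}$ needed to invoke the inductive hypothesis on leaves. A secondary restriction, the dimension bound $n\leq 7$, arises from regularity of stable minimizers; treating $n\geq 8$ demands either the Dirac operator approach of Gromov--Lawson \cite{GL-1} or the modern dimension-free regularity theory for minimal slicings.
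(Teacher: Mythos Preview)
Your proposal is correct and reproduces the classical Schoen--Yau minimal slicing argument for the inequality, together with the CMC foliation method of \cite{Cai-Galloway} and \cite{BBN} for rigidity. The paper, however, does not supply its own proof of Theorem \ref{thm:Geroch}: it is quoted as a known result of Gromov--Lawson and Schoen--Yau, serving only to motivate the weighted generalization Theorem \ref{thm:generalized-Geroch}. The paper's proof of that generalization in Section \ref{sec:geroch} follows the same blueprint---iterated weighted minimal slicing (Proposition \ref{prop:closed-slicing-torus}) for the inequality, and an inductive weighted CMC foliation (Lemmas \ref{lem:inductive-rigidity-torus} and \ref{lem:Ht-sign-torus}, Corollary \ref{cor:local-isometry-torus}) for rigidity---and specializes to your outline when $\varphi$ is constant. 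One minor correction: the restricted map $\Sigma \to T^{n-1}$ has \emph{nonzero} degree, not necessarily degree one, but that is all the induction requires.
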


The same technique also yields a generalization of Theorem \ref{thm:Geroch} to the stabilized scalar curvature setting. The inequality was established in the work of Chu and Zhu \cite{Chu-Zhu}. Here, we also provide a proof of the rigidity statement.

\begin{theorem}{\label{thm:generalized-Geroch}}
Let $(M^n,g)$ be a closed, connected, orientable Riemannian manifold with $n\le 7$, and let $\varphi$ be a smooth function on $M$. Suppose there exists a smooth map $M\to T^{n}$ of non-zero degree. Then
    \[
    \inf_M (-2\Delta_M\varphi-|\nabla^M\varphi|^2+R_M)\le 0.
    \]
    Moreover, equality holds if and only if the universal cover of $M$ is isometric to $\mathbb{R}^n$ with the flat metric, and $\varphi$ is constant on $M$.
\end{theorem}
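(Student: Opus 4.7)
The plan for the inequality is to use Chu--Zhu's warped product construction: I would let $\tilde M := M \times T^N$ with $\tilde g := g + e^{2\varphi/N} g_{T^N}$, compute the scalar curvature
\[
R_{\tilde g} \;=\; -2\lp \varphi - \tfrac{N+1}{N}|\gd \varphi|^2 + R_M \;=\; P_\varphi - \tfrac{1}{N}|\gd \varphi|^2,
\]
and observe that if $\inf_M P_\varphi > 0$, then compactness of $M$ makes $R_{\tilde g} > 0$ on $\tilde M$ for large $N$. Composing $M \to T^n$ with $\mathrm{id}_{T^N}$ gives a non-zero-degree map $\tilde M \to T^{n+N}$, contradicting Theorem~\ref{thm:Geroch}.

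For the rigidity, I would first show that $\varphi$ agrees with Perelman's first eigenfunction up to a constant (reducing to $P_\varphi \equiv 0$), then use Ricci flow to conclude flatness. Assume $P_\varphi \ge 0$ and set $u := e^{\varphi/2}$. A short computation gives $Lu = u\, P_\varphi$ with $L := -4\lp + R_M$, so $Lu \ge 0$ and $u > 0$ force the principal eigenvalue $\lambda_1(L) \ge 0$. Letting $u_0 > 0$ be the first eigenfunction and $\varphi_0 := 2\log u_0$, we have $P_{\varphi_0} \equiv \lambda_1(L)$; applying the already-proved inequality to $\varphi_0$ then gives $\lambda_1(L) \le 0$, so $\lambda_1(L) = 0$. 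The next step would be to exploit the Picone-type identity (which follows from $Lu_0 = 0$):
\[
Lu \;=\; -4\,u_0^{-1}\,\mathrm{div}\!\bigl(u_0^2\,\gd(u/u_0)\bigr).
\]
Since $Lu \ge 0$ and $\int_M \mathrm{div}(u_0^2\,\gd(u/u_0))\,d\vol_g = 0$, the integrand $\mathrm{div}(u_0^2\,\gd(u/u_0))$ is identically zero; testing against $u/u_0$ and integrating by parts forces $u/u_0$ to be constant, so $\varphi - \varphi_0$ is a constant and $P_\varphi \equiv 0$.

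To finish, I would show $\varphi_0$ is constant and $g$ is Ricci flat by invoking Perelman's monotonicity of the $\lambda$-invariant along the Ricci flow $g(t)$ starting from $g$. The non-zero-degree map to $T^n$ is a topological invariant, so the already-proved inequality yields $\lambda(g(t)) \le 0$ for all $t$; combined with Perelman's monotonicity $\tfrac{d}{dt}\lambda \ge 0$ and $\lambda(g(0)) = 0$, this forces $\lambda(g(t)) \equiv 0$. The equality case of Perelman's monotonicity formula
\[
\tfrac{d}{dt}\lambda(g(t)) \;=\; 2\int_M \bigl|\ric_{g(t)} + \gd^2 f\bigr|^2\, e^{-f}\,d\vol_{g(t)},
\]
derived along the Ricci flow coupled with the conjugate heat equation for $f$, then identifies $(g, \varphi_0)$ as a steady gradient Ricci soliton $\ric_g = \gd^2 \varphi_0$ (using that the $\lambda$-minimizer at $t=0$ is $f = -\varphi_0 + \mathrm{const}$). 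Tracing gives $R_M = \lp \varphi_0$; combined with $P_{\varphi_0} \equiv 0$, this yields $\lp \varphi_0 = -|\gd \varphi_0|^2$, and integration over the closed manifold forces $\gd \varphi_0 \equiv 0$. Hence $\varphi_0$ (and thus $\varphi$) is constant and $(M,g)$ is Ricci flat; by the Cheeger--Gromoll splitting theorem together with the non-zero-degree hypothesis, the universal cover of $M$ is isometric to $\mathbb{R}^n$ with the flat metric.

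The hardest step will be making the application of Perelman's rigidity rigorous; this is the ``monotone quantity using Ricci flow coupled with a heat equation'' referenced in the abstract. Concretely, one must establish smoothness of the $\lambda$-minimizer $f(t)$ along the flow (so that the monotonicity formula has a pointwise meaning), verify the identification of $f(0)$ with $-\varphi_0$ up to a constant, and pass from $\tfrac{d}{dt}\lambda \equiv 0$ on an initial interval of time to the pointwise soliton equation at $t=0$.
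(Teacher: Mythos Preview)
Your approach to the inequality has a genuine gap. The torical symmetrization $\tilde M = M \times T^N$ produces a manifold of dimension $n+N$, which for large $N$ exceeds $7$. To apply a Geroch-type theorem to $\tilde M$ (which is not itself a torus but only admits a non-zero-degree map to $T^{n+N}$), you would need either the Schoen--Yau minimal-hypersurface argument (which requires $\dim \tilde M \le 7$) or the Gromov--Lawson Dirac-operator argument (which requires $\tilde M$ to be spin). Neither hypothesis is available: $n+N$ is large, and $M$ is not assumed spin, so $M\times T^N$ need not be spin. This is precisely why the paper states and proves Theorems~\ref{thm:generalized-Geroch} and~\ref{thm:generalized-Geroch-spin} separately; your argument is exactly the paper's proof of the \emph{spin} inequality (Proposition~\ref{prop:strict-inequality}), not of Theorem~\ref{thm:generalized-Geroch}. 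For the $n\le 7$ case the paper instead runs the weighted minimal-slicing argument (Proposition~\ref{prop:closed-slicing-torus}) directly on $M$, never leaving dimension $\le 7$.

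Your rigidity argument is essentially correct but takes a different route from the paper's proof of Theorem~\ref{thm:generalized-Geroch}. The paper argues by induction on $n$ via a foliation by weighted-CMC hypersurfaces near an area-minimizer $\Sigma$, showing each leaf is totally geodesic and $\varphi$ is constant on it, then extending to a local isometry $\Sigma\times\bR\to M$. Your Ricci-flow approach is closer in spirit to the paper's proof of the spin rigidity in Section~\ref{sec:spin}, though the paper's version there is more direct: rather than passing through Perelman's $\lambda$-functional and the conjugate heat equation, it evolves $\varphi$ by the \emph{forward} heat equation coupled with Ricci flow and shows (Corollary~\ref{cor:rf-monotonicity}) that $S=-2\lp\varphi-|\gd\varphi|^2+R$ satisfies $\partial_t S-\lp S=2|\ric-D^2\varphi|^2$, so the strong maximum principle plus the inequality immediately give $S\equiv 0$ and $\ric=D^2\varphi$ without your intermediate Picone reduction. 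Your final step from Ricci-flat to flat via the degree hypothesis is fine (Bochner gives $b_1(M)=n$ parallel $1$-forms).
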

\begin{remark}
    Recent work by Brendle, Hirsch, and Johne \cite{intermediate} provides a generalization of the Geroch conjecture in the setting of intermediate curvature. The corresponding rigidity problem has been studied by Chu, Kwong, and Lee \cite{intermediate-rigidity}. It would be intriguing to consider analogues of stabilized curvature for other curvature conditions as well.
\end{remark}

The dimension constraint in Theorem \ref{thm:generalized-Geroch} can be removed under the additional assumption that $M$ is spin. In this case, one can perform a torical symmetrization in high dimensions and apply the classical result on the Geroch conjecture by Gromov and Lawson. The corresponding rigidity statement can then be derived using a strong maximum principle argument for Ricci flow coupled with a heat equation. The key observation is that by evolving the Riemannian metric \(g\) via the Ricci flow and the potential function \(\varphi\) via the heat equation, the stabilized scalar curvature becomes a supersolution to the heat equation.
\begin{theorem}{\label{thm:generalized-Geroch-spin}}
Let \((M^n, g)\) be a closed, connected, orientable, and spin Riemannian manifold, and let $\varphi$ be a smooth function on $M$. Suppose there exists a smooth map $M\to T^{n}$ of non-zero degree. Then
    \[
    \inf_M (-2\Delta_M\varphi-|\nabla^M\varphi|^2+R_M)\le 0.
    \]
    Moreover, equality holds if and only if the universal cover of $M$ is isometric to $\mathbb{R}^n$ with the flat metric, and $\varphi$ is constant on $M$.
\end{theorem}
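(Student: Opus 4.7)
The inequality comes from torical symmetrization, which reduces the statement to the spin version of the classical Geroch conjecture; rigidity is then obtained by running a coupled Ricci--heat flow and combining the strong maximum principle with a bootstrap of the already-proved inequality. More precisely, for the inequality, fix $N \geq 1$ and consider the warped product $(\tilde M_N, \tilde g_N) := (M \times T^N, g + e^{2\varphi/N}g_{T^N})$. This is a closed, orientable, spin manifold (as the product of two spin manifolds), and composing the given map $M \to T^n$ with the identity on $T^N$ yields a map $\tilde M_N \to T^{n+N}$ of non-zero degree. Gromov--Lawson's spin version of the Geroch conjecture (which holds in all dimensions) \cite{GL-1, GL-2} then gives $\inf R_{\tilde g_N} \leq 0$. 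A direct warped-product calculation shows
\[
R_{\tilde g_N} = R_M - 2\Delta_M \varphi - \frac{N+1}{N}|\nabla^M \varphi|^2,
\]
which is constant along the $T^N$ factor, and letting $N \to \infty$ yields $\inf_M P \leq 0$, where $P := R_M - 2\Delta_M \varphi - |\nabla^M \varphi|^2$.

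For rigidity, assume $\inf_M P = 0$, and evolve $(g,\varphi)$ by Ricci flow coupled with the heat equation,
\[
\partial_t g = -2\ric_g, \qquad \partial_t \varphi = \Delta_g \varphi,
\]
for short time. Using Hamilton's evolution $\partial_t R = \Delta R + 2|\ric|^2$, the variation formula $\partial_t \Delta \varphi = 2\,\ric \cdot \nabla^2 \varphi + \Delta(\partial_t \varphi)$ (in which the would-be $\nabla R \cdot \nabla \varphi$ term vanishes via the contracted Bianchi identity $g^{ij}\partial_t \Gamma^k_{ij} = 0$), and Bochner's formula for $\Delta|\nabla \varphi|^2$, one computes
\[
(\partial_t - \Delta_g)\, P = 2\,|\ric_g - \nabla^2_g \varphi|^2 \geq 0
\]
along the flow. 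The weak maximum principle preserves $P \geq 0$, and the strong maximum principle forces a dichotomy: either $P \equiv 0$ on $M \times [0,T]$, or $P > 0$ on $M \times (0,T]$. The second case is ruled out by applying the inequality of the previous paragraph to the evolved data $(M, g(t), \varphi(t))$, which continues to satisfy all topological hypotheses of the theorem; this forces $\inf_M P(\cdot, t) \leq 0$, a contradiction. Hence $P \equiv 0$ and $\ric_g = \nabla^2_g \varphi$ identically. Tracing gives $R = \Delta \varphi$, and combined with $P = 0$ this yields $\Delta \varphi + |\nabla \varphi|^2 = 0$, i.e., $\Delta(e^\varphi) = 0$. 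On the closed manifold $M$, the harmonic function $e^\varphi$ must be constant, so $\varphi$ is constant and $(M,g)$ is Ricci-flat. The topological assumption, together with the rigidity part of the classical spin Geroch theorem (Cheeger--Gromoll splitting combined with the standard index-theoretic obstruction to parallel spinors), then identifies the universal cover of $M$ as flat $\mathbb{R}^n$.

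The main obstacle is obtaining the clean parabolic identity $(\partial_t - \Delta) P = 2|\ric - \nabla^2 \varphi|^2$: the would-be cross term $\nabla R \cdot \nabla \varphi$ fortuitously cancels via contracted Bianchi, and this cancellation is what makes the plain Ricci--heat coupling (rather than a drift-modified Perelman-type flow) the natural choice here. A secondary, less routine step is the bootstrap of the inequality itself at positive times $t > 0$ to rule out the strict-positivity branch of the strong maximum principle, which avoids ever having to analyze $P$ directly at the parabolic boundary $\{t = 0\}$.
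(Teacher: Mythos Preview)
Your proposal is correct and follows essentially the same route as the paper: torical symmetrization plus Gromov--Lawson for the inequality, then the coupled Ricci--heat flow with the identity $(\partial_t-\Delta)P=2|\ric-\nabla^2\varphi|^2$ and the strong maximum principle (bootstrapped against the inequality at positive times) for rigidity. The only cosmetic differences are that the paper argues the inequality by contradiction rather than taking $N\to\infty$, and concludes $\varphi$ constant by integrating $-\Delta\varphi-|\nabla\varphi|^2=0$ rather than observing $\Delta(e^\varphi)=0$; your explicit invocation of the classical spin Geroch rigidity to pass from Ricci-flat to flat is a point the paper leaves implicit.
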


The paper is organized as follows. In Section \ref{sec:fb-systol}, we review Brendle and Hung's proof of the systolic inequality stated in Theorem \ref{thm:free-boundary} and establish its rigidity case in Section \ref{sec:fb-rigid}. In Section \ref{sec:systol}, we prove the systolic inequality presented in Theorem \ref{thm:Zhu} and discuss its corresponding rigidity case in Section \ref{sec:systol-rigid}. We complete the proof of Theorem \ref{thm:generalized-Geroch} in Section \ref{sec:geroch} using the weighted minimal slicing argument and present the proof of Theorem \ref{thm:generalized-Geroch-spin} in Section \ref{sec:spin}. Finally, in Appendix \ref{sec:RF}, we derive the evolution equation for the stabilized scalar curvature under the Ricci flow coupled with the heat equation, which may be of independent interest.

\subsection*{Acknowledgement} The author wishes to express sincere gratitude to his advisor, Simon Brendle, for inspiring discussions and continued support.

\section{The Free Boundary Systolic Inequality}{\label{sec:fb-systol}}
In this section, we recall the key elements of Brendle and Hung's proof \cite{Brendle-Hung} of the free boundary systolic inequality, which will also be useful when establishing the corresponding rigidity result. 

Assume $M$ is a compact, connected, orientable manifold of dimension $n$ with nonempty boundary $\partial M$. Let $\eta$ be the outward-pointing unit normal vector along $\partial M$. Let $\varphi$ be a smooth function defined on $M$. Suppose there is a smooth map $\xi: \partial M \to S^1$, and a smooth map $(\theta_1, \ldots, \theta_{n-2}):  M \to T^{n-2}$ such that
\[
(\xi, \theta_1, \ldots, \theta_{n-2}) : \partial M \to S^1 \times T^{n-2}
\]
has nonzero degree. Let $\Xi$ denote the pullback of the volume form on $S^1$ via $\xi$, and for each $i$, let $\Theta_i$ be the pullback of the volume form on $S^1$ via $\theta_i :  M \to S^1$. Thus,
\[
\int_{\partial M} \Xi \wedge \Theta_1 \wedge \cdots \wedge \Theta_{n-2} \neq 0.
\]
We define
\[
\sigma(M,g) := \inf \{\mathcal{H}^1(\alpha) : \alpha \text{ is a smooth, closed curve on } \partial M \text{ with } \int_{\alpha} \Xi \neq 0 \}.
\]

The following slicing argument follows from Section~4 in \cite{Brendle-Hung} (or its proof). We refer the reader to \cite{Federer} and \cite{Gruter} for the regularity theory of free boundary minimal surfaces.

\begin{proposition}\label{prop:fb-slicing}
    Suppose 
    \[
    \inf_{\partial M}\bigl(H_{\partial M} + \langle \nabla^M \varphi, \eta \rangle \bigr) > 0.
    \]
    Then we can find a collection of compact, connected, orientable submanifolds $\Sigma_k$ with boundary and a sequence of positive smooth functions $u_k: \Sigma_k \to \mathbb{R}$ for $k \in \{1, \ldots, n-2\}$, as well as a sequence of positive smooth functions $\rho_k: \Sigma_k \to \mathbb{R}$ for $k \in \{0, 1, \ldots, n-2\}$, satisfying the following properties:
    \begin{enumerate}[(i)]
        \item $\Sigma_0 = M$ and $\rho_0 = e^{\varphi}$.
        \item For each $k \in \{0, \ldots, n-2\}$, $\dim \Sigma_k = n-k$.
        \item For each $k \in \{1, \ldots, n-2\}$, we have $\partial \Sigma_k \subset \partial \Sigma_{k-1}$, and $\Sigma_k$ meets $\partial \Sigma_{k-1}$ orthogonally along $\partial \Sigma_k$.
        \item For each $k \in \{1, \ldots, n-2\}$, the outward-pointing unit normal vector field of $\partial \Sigma_k$ in $\Sigma_{k-1}$ equals $\eta$. Moreover, the second fundamental form of $\partial \Sigma_k$ in $\Sigma_{k-1}$ equals the restriction of $h_{\partial M}$ to $T(\partial \Sigma_k)$.
        \item For each $k \in \{1, \ldots, n-2\}$,
        \[
        \int_{\partial \Sigma_k} \Xi \wedge \Theta_{k+1} \wedge \cdots \wedge \Theta_{n-2} \neq 0.
        \]
        \item For each $k \in \{1, \ldots, n-2\}$, $\Sigma_k$ is a free boundary homologically area minimizer in $(\Sigma_{k-1}, \rho_{k-1}^{\frac{2}{n-k}} g_{\Sigma_{k-1}})$.
        \item For each $k \in \{1, \ldots, n-2\}$,
        \[
        H_{\Sigma_k} + \langle \nabla^{\Sigma_{k-1}}\log\rho_{k-1}, \nu_{\Sigma_k} \rangle = 0.
        \]
        \item For each $k \in \{1, \ldots, n-2\}$, the function $u_k$ satisfies
        \begin{align*}
            &-\Delta_{\Sigma_k} u_k - \ric_{\Sigma_{k-1}}(\nu_{\Sigma_k}, \nu_{\Sigma_k}) u_k - |h_{\Sigma_k}|^2 u_k \\
            &\quad + (D^2_{\Sigma_{k-1}}\log\rho_{k-1})(\nu_{\Sigma_k}, \nu_{\Sigma_k}) u_k - \langle \nabla^{\Sigma_k}\log\rho_{k-1}, \nabla^{\Sigma_k} u_k \rangle = \lambda_k u_k,
        \end{align*}
        with the Neumann boundary condition
        \[
        \langle \nabla^{\Sigma_k} u_k, \eta \rangle - h_{\partial M}(\nu_{\Sigma_k}, \nu_{\Sigma_k}) u_k = 0.
        \]
        Here $\lambda_k \geq 0$ is a nonnegative constant.
        \item For each $k \in \{1, \ldots, n-2\}$, we have $\rho_k = \rho_{k-1}|_{\Sigma_k} \cdot u_k$.
        \item For each $k \in \{0, \ldots, n-2\}$,
        \begin{align*}
            &-2\Delta_{\Sigma_k}\log\rho_k - |\nabla^{\Sigma_k}\log\rho_k|^2 + R_{\Sigma_k} \\
            &\quad + 2\Delta_M\varphi + |\nabla^M\varphi|^2 - R_M \\
            &= \sum_{j=1}^k |\nabla^{\Sigma_j}\log u_j|^2 + \sum_{j=1}^k |h_{\Sigma_j}|^2 + 2\sum_{j=1}^k \lambda_j.
        \end{align*}
        \item For each $k \in \{0, \ldots, n-2\}$,
        \[
        \langle \nabla^{\Sigma_k}\log\rho_k, \eta \rangle + H_{\partial \Sigma_k} = \langle \nabla^M \varphi, \eta \rangle + H_{\partial M}
        \]
        at each point on $\partial \Sigma_k$. Here $H_{\partial \Sigma_k}$ denotes the mean curvature of $\partial \Sigma_k$ in $\Sigma_k$.
    \end{enumerate}
\end{proposition}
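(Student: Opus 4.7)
The plan is an induction on $k$, at each stage constructing the next slice as a free-boundary weighted minimal hypersurface in the previous one, in the spirit of the Schoen--Yau minimal slicing adapted to the weighted free-boundary setting by Brendle--Hung. I would set $\Sigma_0 := M$ and $\rho_0 := e^{\varphi}$, so that (i), and the $k=0$ cases of (x) and (xi), hold trivially. For the inductive step, assuming $(\Sigma_{k-1},\rho_{k-1})$ has been constructed, I would use (v) at the previous level together with a regular-value argument for $\theta_k|_{\Sigma_{k-1}}$ to produce a smooth hypersurface with non-vanishing integral of $\Xi \wedge \Theta_{k+1}\wedge\cdots\wedge\Theta_{n-2}$ over its boundary; this fixes a relative homology class $\Lambda_k \in H_{n-k}(\Sigma_{k-1},\partial\Sigma_{k-1})$. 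I would then minimize the weighted area
\[
\cA_{k-1}(\Sigma) := \int_\Sigma \rho_{k-1}\, d\cH^{n-k}
\]
in $\Lambda_k$ among orientable hypersurfaces with boundary in $\partial\Sigma_{k-1}$, which is equivalent to minimizing ordinary area in the conformally rescaled metric $\rho_{k-1}^{2/(n-k)} g_{\Sigma_{k-1}}$. The inductive form of (xi) combined with the hypothesis $\inf_{\partial M}(H_{\partial M}+\langle\nabla^M\varphi,\eta\rangle)>0$ keeps the weighted boundary mean curvature of $\partial\Sigma_{k-1}$ strictly positive, preventing collapse onto $\partial M$ and keeping $\Lambda_k$ non-trivial; the Federer--Gr\"uter regularity theory then yields a smooth free-boundary minimizer $\Sigma_k$ for $n\le 7$, giving (ii)--(vi).

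Next I would extract (vii) from the first variation of $\cA_{k-1}$, and (iii)--(iv) from the free-boundary condition, since the outward unit normal of $\partial\Sigma_k$ in $\Sigma_{k-1}$ is forced to be $\eta$ and the second fundamental form identity follows by restriction. The second variation, integrated by parts with the Robin-type boundary contribution from $h_{\partial M}(\nu_{\Sigma_k},\nu_{\Sigma_k})$, produces the Jacobi-type operator in (viii); since $\Sigma_k$ is a minimizer this operator is non-negative, and a Krein--Rutman argument adapted to the boundary datum then yields a positive principal eigenfunction $u_k$ with eigenvalue $\lambda_k \ge 0$. Setting $\rho_k := \rho_{k-1}|_{\Sigma_k}\cdot u_k$ gives (ix).

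The identities (x) and (xi) I would prove inductively. For (x), the ingredients are the Gauss equation $R_{\Sigma_k} = R_{\Sigma_{k-1}} - 2\ric_{\Sigma_{k-1}}(\nu,\nu) + H_{\Sigma_k}^2 - |h_{\Sigma_k}|^2$; the restriction formula $\Delta_{\Sigma_k} f = (\Delta_{\Sigma_{k-1}} f)|_{\Sigma_k} - (D^2_{\Sigma_{k-1}} f)(\nu,\nu) - H_{\Sigma_k}\langle\nabla^{\Sigma_{k-1}} f,\nu\rangle$ applied to $f=\log\rho_{k-1}$; equation (vii), which replaces each $H_{\Sigma_k}$ by $-\langle\nabla\log\rho_{k-1},\nu\rangle$; and equation (viii), which, after dividing by $u_k$ and using $\Delta u_k/u_k = \Delta\log u_k + |\nabla\log u_k|^2$, becomes a scalar identity for $\log u_k$. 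Plugging $\log\rho_k = \log\rho_{k-1}|_{\Sigma_k} + \log u_k$ into $-2\Delta_{\Sigma_k}\log\rho_k - |\nabla^{\Sigma_k}\log\rho_k|^2 + R_{\Sigma_k}$, the three $\langle\nabla\log\rho_{k-1},\nu\rangle^2$ contributions (from the restriction of $\Delta$, from the tangential-gradient identity, and from $H_{\Sigma_k}^2$) cancel, the $D^2\log\rho_{k-1}(\nu,\nu)$ and $\ric_{\Sigma_{k-1}}(\nu,\nu)$ terms cancel against their counterparts from (viii), and what remains is $(-2\Delta\log\rho_{k-1} - |\nabla\log\rho_{k-1}|^2 + R_{\Sigma_{k-1}})|_{\Sigma_k} + |\nabla\log u_k|^2 + |h_{\Sigma_k}|^2 + 2\lambda_k$; telescoping with the inductive form of (x) at level $k-1$ gives (x). Equation (xi) follows by the same telescoping at $\partial\Sigma_k$, using the Neumann boundary condition in (viii).

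The main obstacle I anticipate is the bookkeeping in (x): although each individual formula is standard, arranging every sign, every cross term, and every $\langle\nabla\log\rho_{k-1},\nu\rangle^2$ contribution to cancel so that the residual is exactly $|\nabla\log u_k|^2 + |h_{\Sigma_k}|^2 + 2\lambda_k$ requires care, since an off-sign would destroy the monotonicity underlying Theorem \ref{thm:free-boundary}. A secondary subtlety is the positivity of $u_k$ under the Robin-type boundary condition involving $h_{\partial M}$; while this is handled by the usual Krein--Rutman machinery, it is worth flagging because the boundary operator is neither pure Neumann nor pure Dirichlet.
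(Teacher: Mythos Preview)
Your proposal is correct and matches the approach the paper attributes to Brendle--Hung: the paper does not supply its own proof of this proposition but states that it ``follows from Section~4 in \cite{Brendle-Hung} (or its proof)'' and cites \cite{Federer} and \cite{Gruter} for the free-boundary regularity. The algebraic identity you outline for (x) is exactly what the paper later records separately as Proposition~\ref{prop:Gauss}, so your bookkeeping concern is already packaged there.
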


\begin{theorem}[Brendle--Hung \cite{Brendle-Hung}]\label{thm:fb-systol}
    Suppose
    \[
    \inf_{\partial M} \bigl(H_{\partial M} + \langle \nabla^M \varphi, \eta \rangle \bigr) > 0.
    \]
    Then there exists a compact, connected, orientable surface $\Sigma$ with $\int_{\partial \Sigma}\Xi \neq 0$ such that
    \begin{align*}
        &\inf_{M}(-2\Delta_M \varphi - |\nabla^M \varphi|^2 + R_M) \cdot |\Sigma| \\
        &\quad + 2\,\inf_{\partial M}(H_{\partial M} + \langle \nabla^M \varphi, \eta \rangle) \cdot |\partial \Sigma| \leq 4\pi.
    \end{align*}
\end{theorem}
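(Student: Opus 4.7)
The plan is to run the slicing procedure of Proposition \ref{prop:fb-slicing} down to the two-dimensional level, producing a free boundary surface $\Sigma := \Sigma_{n-2}$ equipped with a positive weight $\rho := \rho_{n-2}$, and then to combine the two-dimensional Gauss--Bonnet theorem with the weighted curvature identity in property (x) to extract the inequality. The assumption $\inf_{\partial M}(H_{\partial M} + \langle \nabla^M\varphi, \eta\rangle) > 0$ is precisely the positivity condition that activates the slicing, so no further setup is needed before invoking the proposition.

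Setting $\psi := \log\rho_{n-2}$ on $\Sigma$ and specializing (x) to $k = n-2$, I would first use $R_\Sigma = 2K_\Sigma$ for a $2$-manifold and discard the manifestly nonnegative defect terms on the right-hand side of (x) to arrive at the pointwise bound
\[
2K_\Sigma - 2\Delta_\Sigma \psi - |\nabla^\Sigma \psi|^2 \;\geq\; \inf_M\bigl(-2\Delta_M\varphi - |\nabla^M\varphi|^2 + R_M\bigr).
\]
Integrating over $\Sigma$, applying Stokes' theorem to the $\Delta_\Sigma\psi$ contribution, and substituting via the Gauss--Bonnet formula
\[
\int_\Sigma K_\Sigma + \int_{\partial\Sigma} H_{\partial\Sigma} \;=\; 2\pi\chi(\Sigma),
\]
rewrites the left-hand side as
\[
4\pi\chi(\Sigma) - 2\int_{\partial\Sigma}\bigl(H_{\partial\Sigma} + \langle \nabla^\Sigma\psi, \eta\rangle\bigr) - \int_\Sigma |\nabla^\Sigma\psi|^2.
\]
Property (xi) at $k = n-2$ now identifies the boundary integrand with $H_{\partial M} + \langle \nabla^M\varphi, \eta\rangle$; after dropping the nonpositive gradient integral and applying the boundary infimum hypothesis, this yields
\[
\inf_M\bigl(-2\Delta_M\varphi - |\nabla^M\varphi|^2 + R_M\bigr)\cdot|\Sigma| + 2\inf_{\partial M}\bigl(H_{\partial M} + \langle \nabla^M\varphi, \eta\rangle\bigr)\cdot|\partial\Sigma| \;\leq\; 4\pi\chi(\Sigma).
\]

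The last step is purely topological: $\Sigma$ is connected by Proposition \ref{prop:fb-slicing}, and property (v) at $k = n-2$ gives $\int_{\partial\Sigma}\Xi \neq 0$, which forces $\partial\Sigma$ to have at least one component. For a connected orientable surface with nonempty boundary, $\chi(\Sigma) = 2 - 2g - b \leq 1$, closing the inequality at $4\pi$; the same nonvanishing statement certifies that $\Sigma$ is a legitimate witness for the theorem. The only nontrivial bookkeeping is the identification through (xi), which transfers the original boundary hypothesis on $\partial M$ all the way down to $\partial\Sigma$, and this is essentially the only place where the free boundary structure of the slicing (the orthogonality in (iii) and the second fundamental form transfer in (iv)) actively enters the final bound.
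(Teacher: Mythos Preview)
Your proof is correct and follows essentially the same approach as the paper's: apply Proposition \ref{prop:fb-slicing} at $k=n-2$, use property (x) for the pointwise weighted scalar curvature bound, integrate with Gauss--Bonnet and Stokes, invoke property (xi) to transfer the boundary data back to $\partial M$, and finish with $\chi(\Sigma)\le 1$. The only cosmetic difference is that you track and drop the $\int_\Sigma |\nabla^\Sigma\psi|^2$ term explicitly, whereas the paper absorbs it in a single chain of inequalities.
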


\begin{proof}
    Apply Proposition \ref{prop:fb-slicing} with $k = n - 2$. Denote $\Sigma = \Sigma_{n-2}$ and set $\psi = \log \rho_{n-2}$. From this construction, it follows that $\int_{\partial \Sigma} \Xi \neq 0$ and
    \[
    -2\Delta_{\Sigma}\psi - |\nabla^{\Sigma}\psi|^2 + R_{\Sigma} \geq \inf_{M}(-2\Delta_M \varphi - |\nabla^M \varphi|^2 + R_M)
    \]
    at every point of $\Sigma$.

    Since $\Sigma$ is connected, its Euler characteristic is at most 1. Integrating the inequality above and applying the Gauss--Bonnet theorem gives
    \begin{align*}
        4\pi &\geq \int_{\Sigma} R_{\Sigma} \, + 2 \int_{\partial \Sigma} H_{\partial \Sigma}  \\
        &\geq \int_{\Sigma}(-2\Delta_{\Sigma}\psi - |\nabla^{\Sigma}\psi|^2 + R_{\Sigma}) \,  
        + 2\int_{\partial \Sigma}(H_{\partial \Sigma} + \langle \nabla^{\Sigma}\psi, \eta \rangle)  \\
        &\geq \inf_{M}(-2\Delta_M \varphi - |\nabla^M \varphi|^2 + R_M) \cdot |\Sigma| \\
        &\quad + 2\,\inf_{\partial \Sigma}(H_{\partial \Sigma} + \langle \nabla^{\Sigma}\psi, \eta \rangle) \cdot |\partial \Sigma|.
    \end{align*}
    Since 
    \[
    H_{\partial \Sigma} + \langle \nabla^{\Sigma}\psi, \eta \rangle = H_{\partial M} + \langle \nabla^M \varphi, \eta \rangle
    \]
    at each point of $\partial \Sigma$, the theorem follows.
\end{proof}

\begin{corollary}\label{cor:fb-systol}
    Suppose 
    \[
    -2\Delta_M \varphi - |\nabla^M \varphi|^2 + R_M \geq 0
    \]
    at every point in $M$. Then
    \[
    \inf_{\partial M}(H_{\partial M} + \langle \nabla^M \varphi, \eta \rangle) \cdot \sigma(M,g) \leq 2\pi.
    \]
\end{corollary}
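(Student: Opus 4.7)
The plan is to deduce Corollary \ref{cor:fb-systol} directly from Theorem \ref{thm:fb-systol}. Under the hypothesis $-2\Delta_M\varphi - |\nabla^M\varphi|^2 + R_M \geq 0$, the first term in the conclusion of Theorem \ref{thm:fb-systol} is non-negative and can simply be dropped; the rest of the argument is then a bookkeeping comparison between $|\partial\Sigma|$ and the systolic constant $\sigma(M,g)$.

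First I would dispose of the trivial case $\inf_{\partial M}(H_{\partial M} + \langle \nabla^M\varphi, \eta\rangle) \leq 0$: since $\sigma(M,g) \geq 0$, the inequality to be proved holds vacuously. Under the remaining assumption that this boundary infimum is strictly positive, Theorem \ref{thm:fb-systol} applies and produces a compact, connected, orientable surface $\Sigma$ with $\int_{\partial\Sigma}\Xi \neq 0$ such that
$$\inf_{M}(-2\Delta_M \varphi - |\nabla^M \varphi|^2 + R_M)\cdot |\Sigma| + 2\,\inf_{\partial M}(H_{\partial M} + \langle \nabla^M \varphi, \eta \rangle)\cdot |\partial \Sigma| \leq 4\pi.$$
By hypothesis the first term on the left is non-negative, so it can be discarded, yielding
$$2\,\inf_{\partial M}(H_{\partial M} + \langle \nabla^M \varphi, \eta \rangle)\cdot |\partial \Sigma| \leq 4\pi.$$

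Finally I would compare $|\partial\Sigma|$ to $\sigma(M,g)$. The boundary of a compact orientable surface-with-boundary is a disjoint union of smoothly embedded closed curves $\alpha_1, \ldots, \alpha_m$ on $\partial M$, and the additivity relation $\int_{\partial\Sigma}\Xi = \sum_{i=1}^m \int_{\alpha_i}\Xi \neq 0$ forces at least one component $\alpha_{i_0}$ to satisfy $\int_{\alpha_{i_0}}\Xi \neq 0$. Hence $\alpha_{i_0}$ is admissible in the definition of $\sigma(M,g)$, giving $|\partial\Sigma| \geq \mathcal{H}^1(\alpha_{i_0}) \geq \sigma(M,g)$. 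Substituting this into the displayed inequality and dividing by $2$ yields the claim.

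There is no serious obstacle here; Theorem \ref{thm:fb-systol} has already absorbed all the analytic content, and the only point worth being careful about is the additivity argument which ensures that $|\partial\Sigma|$ bounds the length of at least one component with nonzero $\Xi$-period, hence bounds $\sigma(M,g)$ from above.
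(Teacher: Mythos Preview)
Your proposal is correct and is exactly the intended deduction from Theorem \ref{thm:fb-systol}; the paper itself states Corollary \ref{cor:fb-systol} without proof, and your argument (dispose of the case where the boundary infimum is nonpositive, otherwise drop the nonnegative area term and use that some boundary component of $\Sigma$ has nonzero $\Xi$-period to bound $|\partial\Sigma|\ge\sigma(M,g)$) is precisely the routine verification a reader would supply.
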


\section{The Equality Case of Theorem \ref{thm:free-boundary}}\label{sec:fb-rigid}

In this section, we examine the equality case of Theorem \ref{thm:free-boundary}. Our approach is similar in spirit to the rigidity arguments presented in \cite{Ambrozio}. We restate the result here for convenience.

\begin{theorem}\label{thm:fb-rigidity}
    Suppose 
    \[
    -2\Delta_M \varphi - |\nabla^M \varphi|^2 + R_M \geq 0
    \]
    and
    \[
    \inf_{\partial M} \bigl(H_{\partial M} + \langle \nabla^M \varphi, \eta \rangle \bigr) \cdot \sigma(M, g) = 2\pi.
    \]
    Then $\varphi$ is constant on $M$, and the universal cover of $M$ is isometric to $B^2(r) \times \mathbb{R}^{n-2}$ with the standard product metric for some $r > 0$.
\end{theorem}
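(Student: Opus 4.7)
The plan is to analyze the equality case by first extracting strong pointwise information from the proof of Theorem \ref{thm:fb-systol}, then constructing a local foliation of the ambient manifold by free boundary weighted constant-mean-curvature surfaces, and finally iterating the construction through the slicing tower to recover the product structure on the universal cover. First I would trace every inequality in the proof of Theorem \ref{thm:fb-systol}. Since $|\partial\Sigma|\ge\sigma(M,g)$ and $\inf_{\partial M}(H_{\partial M}+\langle\nabla^M\varphi,\eta\rangle)>0$, the hypothesis $\inf_{\partial M}(\cdots)\cdot\sigma=2\pi$ together with $\inf_M(-2\Delta_M\varphi-|\nabla^M\varphi|^2+R_M)\ge 0$ forces saturation at every step. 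One deduces that $\Sigma:=\Sigma_{n-2}$ is a disk with $|\partial\Sigma|=\sigma$, that along $\Sigma$ one has $-2\Delta_M\varphi-|\nabla^M\varphi|^2+R_M\equiv 0$, and that $H_{\partial M}+\langle\nabla^M\varphi,\eta\rangle\equiv 2\pi/\sigma$ on $\partial\Sigma$. Substituting into identity (x) of Proposition \ref{prop:fb-slicing}, the nonnegative terms $|\nabla u_j|^2$, $|h_{\Sigma_j}|^2$, and $\lambda_j$ must each vanish along $\Sigma$ for $j=1,\ldots,n-2$; hence every $u_j$ is constant on $\Sigma_j$, every $\Sigma_j$ is totally geodesic in $\Sigma_{j-1}$, and $\psi:=\log\rho_{n-2}$ is constant along $\Sigma$. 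The Gauss--Bonnet computation then forces $R_\Sigma\equiv 0$ and $H_{\partial\Sigma}\equiv 2\pi/\sigma$, so $\Sigma$ is isometric to the Euclidean disk $B^2(r)$ with $r=\sigma/(2\pi)$, and $\varphi|_\Sigma$ is constant.

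Next I construct a local foliation of the three-manifold $\Sigma_{n-3}$ near $\Sigma$ by free boundary weighted CMC disks. Property (viii) with $\lambda_{n-2}=0$ says that the weighted Jacobi operator on $\Sigma$ annihilates the positive function $u_{n-2}$, which is constant by the previous paragraph. A standard application of the implicit function theorem on the space of $C^{2,\alpha}$ normal graphs orthogonal to this one-dimensional kernel, with a Lagrange multiplier absorbing the weighted mean curvature, produces a smooth family $\{\Sigma^{(t)}\}_{|t|<\varepsilon}$ of free boundary surfaces meeting $\partial\Sigma_{n-3}$ orthogonally, with $\Sigma^{(0)}=\Sigma$ and weighted mean curvature $h(t)$ constant on each leaf. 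Since $\int_{\partial\Sigma^{(t)}}\Xi$ is an integer depending continuously on $t$ it remains nonzero, so $|\partial\Sigma^{(t)}|\ge\sigma$. Rerunning the weighted Gauss--Bonnet computation of Theorem \ref{thm:fb-systol} on $\Sigma^{(t)}$, together with the constancy of $H_{\partial M}+\langle\nabla^M\varphi,\eta\rangle$ on $\partial M$, shows each leaf also saturates the systolic bound. Hence $h(t)\equiv 0$, each $\Sigma^{(t)}$ is totally geodesic and flat, and the weight $\rho_{n-3}$ is constant along each leaf; the vanishing of the second fundamental form then yields an isometric splitting $\Sigma_{n-3}\cong\Sigma\times I_{n-3}$ with product metric, which extends globally by a standard open-closed argument.

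I would then induct down the slicing tower: for each $k=n-3,\ldots,1$, the surface $\Sigma_k$ is a totally geodesic weighted free boundary minimal hypersurface in $\Sigma_{k-1}$ with $\lambda_k=0$ and constant $u_k$, so the same foliation construction yields a local isometric splitting $\Sigma_{k-1}\cong\Sigma_k\times I_k$. Concatenating the splittings gives $M$ a local isometric product structure $\Sigma\times I_1\times\cdots\times I_{n-2}$ with flat product metric. Passing to the universal cover and using that the map $(\xi,\theta_1,\ldots,\theta_{n-2})$ on $\partial M$ has non-zero degree, so the development of the torus factors is all of $\mathbb{R}^{n-2}$, one concludes $\tilde M\cong B^2(r)\times\mathbb{R}^{n-2}$ with $r=\sigma/(2\pi)$. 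Finally, the relation $\varphi=\psi-\sum_j\log u_j$ together with the constancy of $\psi$ and of each $u_j$ on the leaves, and the product structure, imply that $\varphi$ is constant on all of $M$.

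The central difficulty is the second step: setting up the weighted free boundary CMC foliation in the conformal metric $\rho_{n-3}\,g_{\Sigma_{n-3}}$ and verifying that every leaf $\Sigma^{(t)}$ saturates the systolic equality so that $h(t)\equiv 0$. This requires preserving the Neumann-type free boundary condition along the deformation, tracking the homological degree of $\partial\Sigma^{(t)}$ to keep the systolic lower bound in force, and showing the pointwise equality conditions of the first paragraph persist off $\Sigma$ in a neighborhood -- which is precisely what forces each leaf to be totally geodesic. Once this step is in place the inductive propagation through the slicing levels is comparatively routine, since at every level the weighted Jacobi kernel is spanned by a positive constant and the totally-geodesic conditions make the successive splittings compatible.
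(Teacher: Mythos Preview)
Your overall plan is sound in spirit, but the route you take---working bottom-up through the slicing tower---differs structurally from the paper's argument, and your execution has a real gap at the crucial foliation step.

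The paper proceeds by induction on the dimension $n$: it foliates $M$ itself near the \emph{top} slice $\Sigma_1$ (a hypersurface), and invokes the induction hypothesis on the $(n-1)$-dimensional pair $(\Sigma_1,\log\rho_1)$, which already satisfies equality in the systolic inequality, to conclude that the universal cover of $\Sigma_1$ is $B^2(r)\times\mathbb{R}^{n-3}$. You instead foliate inside the $3$-manifold $\Sigma_{n-3}$ near the bottom surface $\Sigma_{n-2}$ and then attempt to climb up through $\Sigma_{n-4},\Sigma_{n-5},\ldots$. This bottom-up approach can be made to work, but it needs $n-2$ separate foliation arguments rather than one, and it requires an observation you did not record: once $\Sigma_k$ has been shown to be a flat product with $\rho_k$ constant, identity (x) of Proposition~\ref{prop:fb-slicing} at level $k$ forces $h_{\Sigma_j}=0$ and $\nabla^{\Sigma_j}\log u_j=0$ on all of $\Sigma_k$ for every $j\le k$; in particular $\Sigma_k$ is totally geodesic in $\Sigma_{k-1}$ and $u_k$ is constant on $\Sigma_k$, which is what the next foliation needs. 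In your first paragraph you assert these conclusions on all of $\Sigma_j$ directly from equality at the bottom; that is an overclaim, since identity (x) at level $n-2$ only yields them along $\Sigma_{n-2}$.

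The more serious gap is in your second paragraph. You write that ``rerunning Gauss--Bonnet on $\Sigma^{(t)}$'' together with ``the constancy of $H_{\partial M}+\langle\nabla^M\varphi,\eta\rangle$ on $\partial M$'' shows each leaf saturates, whence $h(t)\equiv 0$. First, the boundary quantity has only been shown to equal $2\pi/\sigma$ on $\partial\Sigma$, not on all of $\partial M$. Second, the Gauss--Bonnet chain in Theorem~\ref{thm:fb-systol} uses a \emph{stable} weighted minimal surface (so that one has a positive first eigenfunction), whereas $\Sigma^{(t)}$ is a priori only weighted CMC; you cannot simply rerun it. Third, even if each leaf did saturate some systolic estimate, that alone does not force $h(t)=0$. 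The paper's mechanism here (Lemma~\ref{lem:fb-Ht-sign} and Proposition~\ref{prop:fb-weighted-area}) is different: one supposes $\mu'(t_0)>0$, inserts the lapse function $f_{t_0}$ into Proposition~\ref{prop:Gauss} to produce a weight $v=e^{\varphi}f_{t_0}$ whose stabilized scalar curvature on $\Sigma_{t_0}$ is bounded below by $2\tau>0$, and derives $\sigma(\Sigma_{t_0})<2\pi$, a contradiction. This yields $\mu'(t)\le 0$; combined with the first variation of weighted area and the fact that $\Sigma$ is area-minimizing in its relative homology class, one concludes that every leaf has the same weighted area as $\Sigma$ and is therefore itself area-minimizing and stable---only then can one read off the pointwise rigidity on each leaf. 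You should replace your direct ``saturation $\Rightarrow h(t)=0$'' claim with this monotonicity-plus-minimizing argument.
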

We proceed by induction on the dimension \( n \). The case \( n = 2 \) follows directly from the Gauss--Bonnet theorem. Assume Theorem \ref{thm:fb-rigidity} holds for all Riemannian manifolds with dimension at most \( n - 1 \).

Now consider an \( n \)-dimensional manifold \((M, g)\) satisfying
\[
-2\Delta_M \varphi - |\nabla^M \varphi|^2 + R_M \geq 0
\]
and
\[
\inf_{\partial M} \bigl(H_{\partial M} + \langle \nabla^M \varphi, \eta \rangle \bigr) \cdot \sigma(M, g) = 2\pi.
\]
After rescaling the metric, we may assume 
\[
\inf_{\partial M}(H_{\partial M} + \langle \nabla^M \varphi, \eta \rangle) = 1
\quad \text{and} \quad
\sigma(M, g) = 2\pi.
\]

We will use the following identity, which is a consequence of the Gauss equation. For a proof, we refer to Proposition~4.2 in \cite{Brendle-Hung}.

\begin{proposition}\label{prop:Gauss}
    Let $(M,g)$ be a Riemannian manifold, and let $\Sigma \subset M$ be a hypersurface. Suppose $\rho$ is a positive smooth function on $M$, and let $u \in C^{\infty}(\Sigma)$ be a positive function on $\Sigma$. Define $v = \rho|_{\Sigma} \cdot u$. Then, at each point on $\Sigma$, the following identity holds:
    \begin{align*}
        &-2\Delta_{\Sigma} \log v - |\nabla^{\Sigma}\log v|^2 + R_{\Sigma} - (H_{\Sigma} + \langle \nabla^M \log \rho, \nu_{\Sigma} \rangle)^2 \\
        &\quad + 2\Delta_M \log \rho + |\nabla^M \log \rho|^2 - R_M - |\nabla^{\Sigma} \log u|^2 - |h_{\Sigma}|^2 \\
        &= -2u^{-1}\Delta_{\Sigma}u - 2\ric_{M}(\nu_{\Sigma},\nu_{\Sigma})u - 2|h_{\Sigma}|^2 \\
        &\quad + (D^2_{\Sigma}\log \rho)(\nu_{\Sigma},\nu_{\Sigma}) - \langle \nabla^{\Sigma}\log \rho,\nabla^{\Sigma}\log u\rangle.
    \end{align*}
\end{proposition}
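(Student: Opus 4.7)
The identity is a pointwise algebraic relation that follows from standard hypersurface calculus; the only serious inputs are the Gauss equation and the formula expressing the intrinsic Laplacian of a restricted function in terms of the ambient Laplacian and the second fundamental form.

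The plan is to treat the left-hand side as an expression to be simplified, using the four ingredients
\begin{align*}
\Delta_M f &= \Delta_\Sigma(f|_\Sigma) + (D^2_M f)(\nu_\Sigma,\nu_\Sigma) + H_\Sigma \langle \nabla^M f,\nu_\Sigma\rangle, \\
|\nabla^M f|^2 &= |\nabla^\Sigma f|^2 + \langle \nabla^M f,\nu_\Sigma\rangle^2, \\
\Delta_\Sigma \log u &= u^{-1}\Delta_\Sigma u - |\nabla^\Sigma \log u|^2, \\
R_\Sigma &= R_M - 2\,\ric_M(\nu_\Sigma,\nu_\Sigma) + H_\Sigma^2 - |h_\Sigma|^2,
\end{align*}
applied with $f = \log \rho$. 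First I would use $\log v = \log \rho|_\Sigma + \log u$ to split $\Delta_\Sigma \log v = \Delta_\Sigma \log \rho + \Delta_\Sigma \log u$ and $|\nabla^\Sigma \log v|^2 = |\nabla^\Sigma \log \rho|^2 + 2\langle \nabla^\Sigma \log \rho, \nabla^\Sigma \log u\rangle + |\nabla^\Sigma \log u|^2$. Substituting the first and third displayed identities into these expansions, while simultaneously expanding the square $(H_\Sigma + \langle \nabla^M \log\rho, \nu_\Sigma\rangle)^2$, produces the full set of terms to be reconciled.

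The heart of the computation is then a sequence of systematic cancellations: the $\pm 2\Delta_M \log \rho$ contributions cancel against the explicit $+2\Delta_M \log \rho$ on the left; the $\pm 2 H_\Sigma \langle \nabla^M \log \rho, \nu_\Sigma\rangle$ cross terms from the expanded square cancel against those coming from the Laplacian-restriction formula; the second displayed identity eliminates the combination $|\nabla^M \log \rho|^2 - \langle \nabla^M \log \rho, \nu_\Sigma\rangle^2 - |\nabla^\Sigma \log \rho|^2$; and the various $|\nabla^\Sigma \log u|^2$ pieces telescope to zero. What survives is a scalar-curvature block $R_\Sigma - R_M - H_\Sigma^2 - |h_\Sigma|^2$ together with the expected Hessian, $u^{-1}\Delta_\Sigma u$, and mixed-gradient terms. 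Invoking the Gauss equation to rewrite $R_\Sigma - R_M - H_\Sigma^2 - |h_\Sigma|^2$ as $-2\,\ric_M(\nu_\Sigma,\nu_\Sigma) - 2|h_\Sigma|^2$ then produces precisely the right-hand side.

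There is no analytic obstacle --- the entire argument is bookkeeping --- and the only realistic source of error is sign-tracking when expanding the squared mean-curvature gradient and when passing between $\Delta_M$ and $\Delta_\Sigma$. As sanity checks I would specialize to $u\equiv 1$, in which case the formula collapses to the classical identity used in the unweighted minimal slicing argument, and to $\rho\equiv 1$, in which case it reduces to the standard hypersurface computation involving $\log u$.
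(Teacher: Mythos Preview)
Your approach is correct and is exactly the standard route: the paper itself does not supply a proof but simply refers to Proposition~4.2 of \cite{Brendle-Hung}, where the identity is obtained by precisely the Gauss-equation/Laplacian-restriction bookkeeping you outline. Carrying out your computation verbatim yields
\[
-2u^{-1}\Delta_\Sigma u - 2\,\ric_M(\nu_\Sigma,\nu_\Sigma) - 2|h_\Sigma|^2 + 2(D^2_M\log\rho)(\nu_\Sigma,\nu_\Sigma) - 2\langle\nabla^\Sigma\log\rho,\nabla^\Sigma\log u\rangle,
\]
which matches the right-hand side once one corrects the evident typographical slips in the displayed statement (the stray factor $u$ on the Ricci term, $D^2_\Sigma$ in place of $D^2_M$, and the missing factors of $2$ on the last two terms); the way the identity is used later in the paper confirms this is the intended formula.
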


\begin{lemma}\label{lem:fb-inductive-rigidity}
    Suppose $\Sigma^{n-1} \subset M$ is a compact, connected, orientable free boundary stable minimal hypersurface in $(M, e^{\frac{2}{n-1}\varphi} g)$ with
    \[
    \int_{\partial \Sigma} \Xi \wedge \Theta_2 \wedge \cdots \wedge \Theta_{n-2} \neq 0.
    \]
    Then:
    \begin{itemize}
        \item The universal cover of $\Sigma$ is isometric to $B^2(1) \times \mathbb{R}^{n-3}$ equipped with the standard product metric.
        \item The function $e^{\varphi}|_{\Sigma} \cdot u$ is constant on $\Sigma$, where $u$ is the first eigenfunction of the Jacobi operator
        \begin{align*}
            &-\Delta_{\Sigma} u - \ric_M(\nu_{\Sigma}, \nu_{\Sigma})u - |h_{\Sigma}|^2 u \\
            &\quad + (D^2_{M}\log \rho)(\nu_{\Sigma}, \nu_{\Sigma})u - \langle \nabla^{\Sigma}\log \rho, \nabla^{\Sigma}u \rangle = \lambda u,
        \end{align*}
        subject to the Neumann boundary condition
        \[
        \langle \nabla^{\Sigma}u, \eta \rangle - h_{\partial M}(\nu_{\Sigma}, \nu_{\Sigma})u = 0.
        \]
    \end{itemize}
\end{lemma}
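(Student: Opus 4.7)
The plan is to apply the inductive hypothesis — Theorem \ref{thm:fb-rigidity} in dimension $n-1$ — to the triple $(\Sigma, g_{\Sigma}, \psi)$, where $\psi := \varphi|_{\Sigma} + \log u$. Equivalently, with $v := e^{\psi} = e^{\varphi}|_{\Sigma} \cdot u$, the second claim that $v$ is constant is exactly the statement that $\psi$ is constant, and this will come from the rigidity half of the inductive theorem. The weight $\psi$ is precisely the $\log \rho_{1}$ that would appear if the slicing of Proposition \ref{prop:fb-slicing} were continued one step further; this is the natural choice that lets the stabilized scalar curvature bound descend from $M$ to $\Sigma$.

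First I will verify three things about $(\Sigma, g_{\Sigma}, \psi)$. For the stabilized scalar curvature: using Proposition \ref{prop:Gauss} with $\rho = e^{\varphi}$, together with the weighted minimality $H_{\Sigma} + \langle \nabla^{M} \varphi, \nu_{\Sigma} \rangle = 0$ (which kills the squared term in the Gauss identity) and the Jacobi eigenvalue equation for $u$, I obtain
\[
-2 \Delta_{\Sigma} \psi - |\nabla^{\Sigma} \psi|^{2} + R_{\Sigma} = \bigl( -2 \Delta_{M} \varphi - |\nabla^{M} \varphi|^{2} + R_{M} \bigr) + |\nabla^{\Sigma} \log u|^{2} + |h_{\Sigma}|^{2} + 2\lambda \geq 0,
\]
which is exactly property (x) of Proposition \ref{prop:fb-slicing} at level $k = 1$. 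For the boundary: properties (iii)--(iv) and (viii) combine to give property (xi) at $k = 1$, namely the identity $H_{\partial \Sigma} + \langle \nabla^{\Sigma} \psi, \eta \rangle = H_{\partial M} + \langle \nabla^{M} \varphi, \eta \rangle \geq 1$ along $\partial \Sigma$. For the topology: any smooth closed curve $\alpha$ on $\partial \Sigma$ is also a smooth closed curve on $\partial M$ with the same $\int_{\alpha} \Xi$, so $\sigma(\Sigma, g_{\Sigma}) \geq \sigma(M, g) = 2\pi$; and the hypothesis $\int_{\partial \Sigma} \Xi \wedge \Theta_{2} \wedge \cdots \wedge \Theta_{n-2} \neq 0$ is precisely the non-zero-degree hypothesis required by Theorem \ref{thm:fb-rigidity} in dimension $n - 1$.

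With these in hand, Corollary \ref{cor:fb-systol} applied to $(\Sigma, g_{\Sigma}, \psi)$ gives $\inf_{\partial \Sigma} (H_{\partial \Sigma} + \langle \nabla^{\Sigma} \psi, \eta \rangle) \cdot \sigma(\Sigma, g_{\Sigma}) \leq 2\pi$, so combined with the two lower bounds every inequality in the chain $2\pi \leq 1 \cdot 2\pi \leq \inf \cdot \sigma \leq 2\pi$ must saturate. The inductive rigidity then forces $\psi$ to be constant on $\Sigma$ — equivalently $v = e^{\varphi}|_{\Sigma} \cdot u$ is constant, which is the second claim — and forces the universal cover of $\Sigma$ to be isometric to $B^{2}(r) \times \mathbb{R}^{n-3}$ for some $r > 0$. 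To pin down $r = 1$: constancy of $\psi$ makes $\langle \nabla^{\Sigma} \psi, \eta \rangle = 0$, so $H_{\partial \Sigma} = H_{\partial M} + \langle \nabla^{M} \varphi, \eta \rangle$ along $\partial \Sigma$, while on the model $B^{2}(r) \times \mathbb{R}^{n-3}$ the boundary mean curvature is the constant $1/r$ and the relevant systole equals $2\pi r$; saturated equality in the two lower bounds then gives $1/r = 1$ and $2\pi r = 2\pi$.

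The main obstacle is organizational rather than computational: the identities needed on $\Sigma$ are already packaged as properties (x)--(xi) of Proposition \ref{prop:fb-slicing}. The substantive conceptual step is the choice $\psi = \varphi|_{\Sigma} + \log u$, coupling the ambient weight to the first weighted Jacobi eigenfunction $u$ in exactly the right way to make the boundary identity an exact equality (not merely an inequality). This is what prevents any slack in the chain of inequalities and lets the rigidity induction close cleanly.
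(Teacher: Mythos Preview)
Your proposal is correct and follows essentially the same approach as the paper: descend the stabilized scalar curvature lower bound to $\Sigma$ via Proposition~\ref{prop:Gauss}, combine with the boundary identity (property (xi) of Proposition~\ref{prop:fb-slicing}) and the trivial bound $\sigma(\Sigma,g_\Sigma)\ge\sigma(M,g)$, then invoke the systolic inequality and the inductive rigidity hypothesis. Your write-up is in fact more explicit than the paper's in two places --- you spell out the boundary identity and you justify $r=1$, which the paper leaves implicit.
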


\begin{proof}
    First, we have the trivial inequality 
    \[
    \sigma(\Sigma, g|_{\Sigma}) \geq \sigma(M,g) = 2\pi.
    \]
    On the other hand, Proposition \ref{prop:Gauss} implies that for $\rho = e^{\varphi}|_{\Sigma} \cdot u$,
    \begin{align*}
        &-2\Delta_{\Sigma}\log \rho - |\nabla^{\Sigma}\log\rho|^2 + R_{\Sigma} \\
        &\ge \inf_{M}(-2\Delta_M \varphi - |\nabla^M\varphi|^2 + R_M) \\
        &= 0
    \end{align*}
    at every point in $\Sigma$. Theorem \ref{thm:fb-systol} then implies $\sigma(\Sigma, g|_{\Sigma}) \leq 2\pi$ and
    \[
    2\pi = \inf_{\partial \Sigma} (H_{\partial \Sigma} + \langle \nabla^{\Sigma}\log \rho, \eta \rangle) \cdot \sigma(\Sigma,g|_{\Sigma}).
    \]
    The claim now follows from the induction hypothesis.
\end{proof}

\begin{lemma}\label{lem:fb-Jacobi-operator}
    Suppose \(\Sigma^{n-1} \subset M\) is a compact, connected, orientable, free boundary stable minimal hypersurface in \((M, e^{\frac{2}{n-1}\varphi} g)\) with
    \[
    \int_{\partial \Sigma} \Xi \wedge \Theta_2 \wedge \cdots \wedge \Theta_{n-2} \neq 0.
    \]
    Then:
    \begin{itemize}
        \item \(\Sigma\) is totally geodesic, with \(\ric_M(\nu_{\Sigma}, \nu_{\Sigma}) = 0\) and \(R_M = 0\) at every point of \(\Sigma\).
        \item \(\varphi|_{\Sigma}\) is constant, with \(\nabla^M \varphi = 0\) and \((D^2_M \varphi)(\nu_{\Sigma}, \nu_{\Sigma}) = 0\) at every point of \(\Sigma\).
        \item \(h_{\partial M}(\nu_{\Sigma}, \nu_{\Sigma}) = 0\) and \(H_{\partial \Sigma} = 1\) at each point on \(\partial \Sigma\).
    \end{itemize}
\end{lemma}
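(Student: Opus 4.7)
The strategy is to combine Lemma~\ref{lem:fb-inductive-rigidity} with the Jacobi equation of Proposition~\ref{prop:fb-slicing}(viii), the Gauss equation for $\Sigma\subset M$, and the hypothesis on the stabilized scalar curvature, to squeeze a chain of inequalities into a single pointwise equality. Lemma~\ref{lem:fb-inductive-rigidity} supplies $R_\Sigma\equiv 0$, $H_{\partial\Sigma}\equiv 1$, and the fact that $u=Ce^{-\varphi|_\Sigma}$ for some constant $C>0$. Plugging this $u$ into the Jacobi equation and using the weighted-minimal condition $H_\Sigma=-\langle\nabla^M\varphi,\nu_\Sigma\rangle$ together with the decomposition $\Delta_\Sigma\varphi=\Delta_M\varphi+\langle\nabla^M\varphi,\nu_\Sigma\rangle^2-(D^2_M\varphi)(\nu_\Sigma,\nu_\Sigma)$, first I would reduce the Jacobi equation on $\Sigma$ to
\[
\Delta_M\varphi=\lambda+\ric_M(\nu_\Sigma,\nu_\Sigma)+|h_\Sigma|^2-\langle\nabla^M\varphi,\nu_\Sigma\rangle^2.
\]
Inserting the Gauss relation $2\ric_M(\nu_\Sigma,\nu_\Sigma)=R_M+\langle\nabla^M\varphi,\nu_\Sigma\rangle^2-|h_\Sigma|^2$ (which uses $R_\Sigma=0$ and $H_\Sigma=-\langle\nabla^M\varphi,\nu_\Sigma\rangle$) and combining with the hypothesis $-2\Delta_M\varphi-|\nabla^M\varphi|^2+R_M\ge 0$, every interior and cross term cancels, leaving
\[
0\ge 2\lambda+|h_\Sigma|^2+|\nabla^\Sigma\varphi|^2.
\]
Each summand is nonnegative, so all vanish: $\lambda=0$, $\Sigma$ is totally geodesic, and $\varphi|_\Sigma$ is constant. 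Equality holds throughout, so the stabilized scalar curvature is zero pointwise on $\Sigma$.

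The remaining consequences follow quickly. Weighted-minimality together with $h_\Sigma=0$ gives $\langle\nabla^M\varphi,\nu_\Sigma\rangle=0$, and combined with $\nabla^\Sigma\varphi=0$ this yields $\nabla^M\varphi=0$ at every point of $\Sigma$. The Neumann condition in Proposition~\ref{prop:fb-slicing}(viii) applied to $u=Ce^{-\varphi|_\Sigma}$ (which is constant on $\Sigma$ once $\varphi|_\Sigma$ is) reduces to $h_{\partial M}(\nu_\Sigma,\nu_\Sigma)u=0$, producing $h_{\partial M}(\nu_\Sigma,\nu_\Sigma)=0$ on $\partial\Sigma$; the identity $H_{\partial\Sigma}=1$ is already contained in Lemma~\ref{lem:fb-inductive-rigidity}.

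The delicate step is disentangling $R_M$, $\ric_M(\nu_\Sigma,\nu_\Sigma)$, and $(D^2_M\varphi)(\nu_\Sigma,\nu_\Sigma)$: the arguments above only yield the proportionality $R_M=2\ric_M(\nu_\Sigma,\nu_\Sigma)=2(D^2_M\varphi)(\nu_\Sigma,\nu_\Sigma)$, not that each one vanishes. To produce an independent identity I would invoke Proposition~\ref{prop:Gauss} with $\rho=e^\varphi$ and $v=\rho u\equiv C$. Substituting the vanishing data already established ($v$ constant, $R_\Sigma=0$, $H_\Sigma+\langle\nabla^M\varphi,\nu_\Sigma\rangle=0$, $h_\Sigma=0$, $\nabla^\Sigma\varphi=0$, $u$ constant on $\Sigma$, $|\nabla^M\varphi|^2=0$ and $\Delta_M\varphi=(D^2_M\varphi)(\nu_\Sigma,\nu_\Sigma)$ on $\Sigma$), the identity of Proposition~\ref{prop:Gauss} collapses to
\[
2(D^2_M\varphi)(\nu_\Sigma,\nu_\Sigma)-R_M=-2\ric_M(\nu_\Sigma,\nu_\Sigma)+(D^2_M\varphi)(\nu_\Sigma,\nu_\Sigma).
\]
Combined with the Gauss equation, which in this totally geodesic, $\langle\nabla^M\varphi,\nu_\Sigma\rangle=0$ case reads $R_M=2\ric_M(\nu_\Sigma,\nu_\Sigma)$, this forces $(D^2_M\varphi)(\nu_\Sigma,\nu_\Sigma)=0$, hence $\ric_M(\nu_\Sigma,\nu_\Sigma)=0$ and $R_M=0$. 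The main execution hazard is tracking the sign conventions for $H_\Sigma$ and $h_\Sigma$ carefully enough that the cross terms line up and cancel correctly when combining the Jacobi equation, the Gauss equation, and Proposition~\ref{prop:Gauss}.
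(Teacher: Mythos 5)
Your steps up through establishing $\lambda=0$, $h_\Sigma=0$, $\nabla^\Sigma\varphi=0$, $\langle\nabla^M\varphi,\nu_\Sigma\rangle=0$, $\nabla^M\varphi=0$, $h_{\partial M}(\nu_\Sigma,\nu_\Sigma)=0$, and $H_{\partial\Sigma}=1$ are correct and match the paper's route in substance; you simply substitute $u=Ce^{-\varphi|_\Sigma}$ and the decomposition of $\Delta_\Sigma\varphi$ by hand instead of invoking the packaged identity of Proposition~\ref{prop:Gauss} (equivalently, item~(x) of Proposition~\ref{prop:fb-slicing}), but the inequality you land on, $0\ge 2\lambda+|h_\Sigma|^2+|\nabla^\Sigma\varphi|^2$, is exactly the one the paper reads off from $0=(-2\Delta_M\varphi-|\nabla^M\varphi|^2+R_M-R_\Sigma)+|\nabla^\Sigma\log u|^2+|h_\Sigma|^2+2\lambda$, since $|\nabla^\Sigma\log u|^2=|\nabla^\Sigma\varphi|^2$. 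You are also right to flag the ``disentangling'' step as the delicate one.

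That final step is where the gap sits. Your independent identity from Proposition~\ref{prop:Gauss}, namely $2(D^2_M\varphi)(\nu_\Sigma,\nu_\Sigma)-R_M=-2\ric_M(\nu_\Sigma,\nu_\Sigma)+(D^2_M\varphi)(\nu_\Sigma,\nu_\Sigma)$, relies on reading the right-hand side of Proposition~\ref{prop:Gauss} as having coefficient $1$ on the Hessian term, which appears to be a typo in the statement (the same line also has a stray factor of $u$ and writes $D^2_\Sigma$ where $\nu_\Sigma$ is normal). If you rederive that identity from the twice-traced Gauss equation and the decomposition of $\Delta_\Sigma\log\rho$, the Hessian term comes out with coefficient $2$, so that after plugging in $v$ constant, $u$ constant, $h_\Sigma=0$, $\nabla^\Sigma\varphi=0$, $\nabla^M\varphi=0$, $R_\Sigma=0$, the identity collapses to $-R_M=-2\ric_M(\nu_\Sigma,\nu_\Sigma)$, i.e.\ the Gauss equation again, and provides nothing new. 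In fact, at this stage of the argument the three pieces of information available on $\Sigma$ — the vanishing of the stabilized scalar curvature, the Gauss equation, and the Jacobi equation with $\lambda=0$, $h_\Sigma=0$, $u$ constant — all reduce to the single relation $\ric_M(\nu_\Sigma,\nu_\Sigma)=(D^2_M\varphi)(\nu_\Sigma,\nu_\Sigma)$ and $R_M=2\ric_M(\nu_\Sigma,\nu_\Sigma)$; nothing forces the individual vanishing of $\ric_M(\nu_\Sigma,\nu_\Sigma)$, $(D^2_M\varphi)(\nu_\Sigma,\nu_\Sigma)$, $R_M$ from the data on $\Sigma$ alone. The paper's own displayed computation $0=-2\Delta_M\varphi-|\nabla^M\varphi|^2+R_M-R_\Sigma=-2(D^2_M\varphi)(\nu_\Sigma,\nu_\Sigma)+\ric_M(\nu_\Sigma,\nu_\Sigma)$ hinges on the same coefficient (it should be $2\ric_M(\nu_\Sigma,\nu_\Sigma)$), so the proof of this bullet in the paper has the identical issue. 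Worth noting for salvage: the downstream use of this Lemma in Corollary~\ref{cor:fb-leaf-rigidity} only requires the weaker relation $\ric_M(\nu_{\Sigma_t},\nu_{\Sigma_t})=(D^2_M\varphi)(\nu_{\Sigma_t},\nu_{\Sigma_t})$ together with $h_{\Sigma_t}=0$ and $\nabla^{\Sigma_t}\varphi=0$, because those already make the Jacobi equation for the lapse collapse to $-\Delta_{\Sigma_t}f_t=\mu'(t)$; the individual vanishings $\ric_M(\nu_\Sigma,\nu_\Sigma)=R_M=(D^2_M\varphi)(\nu_\Sigma,\nu_\Sigma)=0$ are consequences of the eventual product splitting, not prerequisites for it.
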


\begin{proof}
    By Lemma \ref{lem:fb-inductive-rigidity}, the function \(\rho = e^{\varphi}|_{\Sigma} \cdot u\) is constant on \(\Sigma\), where \(u\) is the first eigenfunction of the stability operator with Neumann boundary conditions. Hence, Proposition \ref{prop:Gauss} gives
      \begin{align*}
        0 &= (-2\Delta_{M} \varphi - |\nabla^{M} \varphi|^2 + R_{M} - R_{\Sigma}) + |\nabla^{\Sigma} \log u|^2 + |h_{\Sigma}|^2 + 2\lambda.
    \end{align*}
    Since 
    \[
    -2\Delta_{M} \varphi - |\nabla^{M} \varphi|^2 + R_{M} \geq 2 = R_{\Sigma},
    \]
    we must have $\nabla^{\Sigma} \log u = 0$ and $h_{\Sigma} = 0$. This implies that $\Sigma$ is totally geodesic and both $u$ and $\varphi|_{\Sigma}$ are constant on $\Sigma$. In particular, $H_{\Sigma} = 0$, and we conclude that 
    \[
    \langle \nabla^M \varphi, \nu_{\Sigma} \rangle = 0.
    \]
    Therefore, $\nabla^M \varphi = 0$, and
    \begin{align*}
        0 &= -2\Delta_M \varphi - |\nabla^{M} \varphi|^2 + R_{M} - R_{\Sigma} \\
          &= -2(D^2_M \varphi)(\nu_{\Sigma}, \nu_{\Sigma}) + \ric_M(\nu_{\Sigma}, \nu_{\Sigma}).
    \end{align*}
    Moreover, since $\lambda = 0$ and the function $u$ satisfies the equation
    \begin{align*}
        -\Delta_{\Sigma} u - \ric_M(\nu_{\Sigma}, \nu_{\Sigma}) u - |h_{\Sigma}|^2 u + (D^2_M \varphi)(\nu_{\Sigma}, \nu_{\Sigma}) u - \langle \nabla^{\Sigma} \varphi, \nabla^{\Sigma} u \rangle &= \lambda u,
    \end{align*}
    we obtain that
    \[
    -\ric_M(\nu_{\Sigma}, \nu_{\Sigma}) + (D^2_M \varphi)(\nu_{\Sigma}, \nu_{\Sigma}) = 0.
    \]
    From this, we conclude that 
    \[
    \ric_M(\nu_{\Sigma}, \nu_{\Sigma}) = (D^2_M \varphi)(\nu_{\Sigma}, \nu_{\Sigma}) = 0.
    \]

    Finally, the Neumann condition states \(h_{\partial M}(\nu_{\Sigma}, \nu_{\Sigma}) = \langle \nabla^{\Sigma}\log u, \eta\rangle = 0\) and
    \[
    H_{\partial \Sigma}=H_{\partial \Sigma} + \langle\nabla^{\Sigma}\log u, \eta\rangle = H_{\partial M} + \langle\nabla^M\varphi, \eta\rangle = 1
    \]
    at each point on \(\partial \Sigma\). This completes the proof.
\end{proof}

\begin{lemma}\label{lem:fb-higher-foliation}
    Suppose $\Sigma^{n-1} \subset M$ is a compact, connected, orientable free boundary stable minimal hypersurface in $(M, e^{\frac{2}{n-1}\varphi} g)$ with
    \[
    \int_{\partial \Sigma} \Xi \wedge \Theta_2 \wedge \cdots \wedge \Theta_{n-2} \neq 0.
    \]
    Then there exists $\delta > 0$ and a smooth map $w: \Sigma \times (-\delta, \delta) \to \mathbb{R}$ with the following properties:
    \begin{itemize}
        \item For each $x \in \Sigma$, we have $w(x,0) = 0$ and $\frac{\partial}{\partial t} w(x,t)\big|_{t=0} = 1$.
        \item For each $t \in (-\delta, \delta)$, we have $\displaystyle \int_{\Sigma} (w(\cdot, t)-t) \, d\vol_g = 0$.
        \item For each $t \in (-\delta, \delta)$, define
        \[
        \Sigma_t := \{\exp_{x}(w(x,t)\nu_{\Sigma}(x)) : x \in \Sigma\}.
        \]
        Then:
        \begin{enumerate}[(i)]
            \item $\partial \Sigma_t \subset \partial M$ and $\Sigma_t$ meets $\partial M$ orthogonally along $\partial M$.
            \item $H_{\Sigma_t} + \langle \nabla^M \varphi, \nu_{\Sigma_t} \rangle$ is constant on $\Sigma_t$.
        \end{enumerate}
    \end{itemize}
\end{lemma}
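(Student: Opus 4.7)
The plan is to apply the implicit function theorem in Hölder spaces to produce a one-parameter family of weighted constant mean curvature hypersurfaces near $\Sigma$ meeting $\partial M$ orthogonally. The crucial simplification, provided by Lemma \ref{lem:fb-Jacobi-operator}, is that on $\Sigma$ all the bulk curvature, $D^2_M\varphi$, $\nabla^\Sigma\varphi$, and boundary second-fundamental-form terms in the linearization vanish, reducing the relevant elliptic problem to a pure Neumann Laplacian.

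Using a smooth vector field on a tubular neighborhood of $\Sigma$ that extends $\nu_\Sigma$ and is tangent to $\partial M$ along $\partial M$ (such an extension exists because $\nu_\Sigma \in T\partial M$ along $\partial\Sigma$), I first fix a smooth family of embeddings $F_w:\Sigma\to M$ parametrized by $w$ in a neighborhood of $0\in C^{2,\alpha}(\Sigma)$, such that $F_0$ is the inclusion, $F_w(\partial\Sigma)\subset\partial M$ for all small $w$, and the normal component of $\partial_w F_w|_{w=0}$ equals $w\,\nu_\Sigma$. Setting $\Sigma_w:=F_w(\Sigma)$, I then consider
\[
\Phi(w,c):=\bigl(H_{\Sigma_w}+\langle\nabla^M\varphi,\nu_{\Sigma_w}\rangle-c,\;\langle\nu_{\Sigma_w},\eta\rangle|_{\partial\Sigma_w}\bigr)\in C^{0,\alpha}(\Sigma)\times C^{1,\alpha}(\partial\Sigma),
\]
whose zero set encodes both constant weighted mean curvature and orthogonality to $\partial M$. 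By Proposition \ref{prop:fb-slicing}(vii), $\Phi(0,0)=(0,0)$.

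The differential $D\Phi|_{(0,0)}(u,\dot c)$ equals the weighted Jacobi operator of Proposition \ref{prop:fb-slicing}(viii) applied to $u$, minus $\dot c$, paired with the linearized orthogonality condition $\partial_\eta u-h_{\partial M}(\nu_\Sigma,\nu_\Sigma)\,u$ on $\partial\Sigma$. Lemma \ref{lem:fb-Jacobi-operator} collapses this to
\[
(u,\dot c)\longmapsto\bigl(-\Delta_\Sigma u-\dot c,\;\partial_\eta u\bigr).
\]
The kernel is spanned by $(1,0)$, and the map is surjective: for any target $(f,\psi)$, choosing $\dot c:=-|\Sigma|^{-1}\bigl(\int_\Sigma f\,d\vol_g+\int_{\partial\Sigma}\psi\,d\mathcal{H}^{n-2}\bigr)$ makes the Neumann problem $-\Delta_\Sigma u=f+\dot c$, $\partial_\eta u=\psi$ compatible. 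To kill the kernel and parametrize by the scalar $t$, restrict $w$ to the codimension-one subspace $\{w:\int_\Sigma w\,d\vol_g=0\}$ and consider $\tilde\Phi(w,c,t):=\Phi(w+t,c)$; the derivative of $\tilde\Phi$ in $(w,c)$ at the origin is then an isomorphism. The implicit function theorem produces smooth curves $\hat w(t)$ and $c(t)$ with $\tilde\Phi(\hat w(t),c(t),t)\equiv0$ for $|t|<\delta$. Setting $w(x,t):=\hat w(t)(x)+t$ yields the desired map: the normalization $\int_\Sigma(w(\cdot,t)-t)\,d\vol_g=0$ holds by construction, while differentiating the IFT identity at $t=0$ and using the Neumann Laplacian analysis forces $\dot c(0)=0$ and $\dot{\hat w}(0)\equiv0$, so $\partial_t w(x,t)|_{t=0}=1$. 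The surfaces $\Sigma_t:=\Sigma_{w(\cdot,t)}$ inherit orthogonality to $\partial M$ and have $H_{\Sigma_t}+\langle\nabla^M\varphi,\nu_{\Sigma_t}\rangle\equiv c(t)$ on each slice.

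The main technical point I expect is organizing the free boundary parametrization $F_w$ so that the problem becomes a one-scalar Fredholm problem with the target $C^{0,\alpha}(\Sigma)\times C^{1,\alpha}(\partial\Sigma)$. Once that is in place, the dramatic collapse of the Jacobi operator to $-\Delta_\Sigma$ with Neumann boundary condition via Lemma \ref{lem:fb-Jacobi-operator} makes the invertibility of the linearization nearly automatic, and the remainder of the construction is a routine application of the implicit function theorem.
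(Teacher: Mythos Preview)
Your proposal is correct and follows essentially the same approach as the paper: an implicit function theorem in H\"older spaces, with Lemma~\ref{lem:fb-Jacobi-operator} collapsing the weighted Jacobi operator and oblique boundary condition to the pure Neumann Laplacian. The only differences are organizational---the paper projects the weighted mean curvature onto its mean-zero part in the target space rather than introducing your auxiliary constant $c$, and is less explicit than you are about using a boundary-tangent extension of $\nu_\Sigma$ to ensure $\partial\Sigma_w\subset\partial M$.
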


\begin{proof}
    Fix some $\alpha \in (0,1)$. Given $\psi \in C^{2,\alpha}(\Sigma)$, define
    \[
    \Sigma_{\psi} := \{\exp_{x}(\psi(x)\nu_{\Sigma}(x)) : x \in \Sigma\}
    \]
    with mean curvature $H_{\Sigma_\psi}$ and unit normal $\nu_{\Sigma_\psi}$. Set $\tilde{H}_{\Sigma_\psi} := H_{\Sigma_\psi} + \langle \nabla^M \varphi, \nu_{\Sigma_\psi}\rangle$.

    Consider the map
    \[
    \mathcal{F}: C^{2,\alpha}(\Sigma) \to \mathring{C}^{\alpha}(\Sigma) \times \mathbb{R} \times C^{1,\alpha}(\partial \Sigma)
    \]
    defined by
    \[
    \psi \mapsto \left(\tilde{H}_{\Sigma_\psi} - \frac{1}{\vol(\Sigma)}\int_{\Sigma}\tilde{H}_{\Sigma_\psi}, \frac{1}{\vol(\Sigma)}\int_{\Sigma}\psi, \langle \nu_{\Sigma_{\psi}}, \eta \rangle \right),
    \]
    where $\mathring{C}^{\alpha}(\Sigma)$ denotes the space of $\alpha$-Hölder continuous functions on $\Sigma$ with zero average.

    By Lemma \ref{lem:Jacobi-operator}, the linearization $(D\mathcal{F})_{\psi=0}: C^{2,\alpha}(\Sigma) \to \mathring{C}^{\alpha}(\Sigma) \times \mathbb{R} \times C^{1,\alpha}(\partial \Sigma)$ is given by
    \[
    f \mapsto \left(-\Delta_{\Sigma}f + \frac{1}{\vol(\Sigma)}\int_{\partial\Sigma}\langle \nabla^{\Sigma}f, \eta \rangle, \frac{1}{\vol(\Sigma)}\int_{\Sigma}f, -\langle \nabla^{\Sigma}f, \eta \rangle \right),
    \]
    which is bijective.

    Hence, by the implicit function theorem, there exists $\delta > 0$ and a smooth map $w : \Sigma \times (-\delta,\delta) \to \mathbb{R}$ that satisfies the desired properties.
\end{proof}

By our assumption 
\[
\inf_{\partial M}\bigl(H_{\partial M} + \langle \nabla^M \varphi,\eta\rangle \bigr) > 0,
\]
Proposition \ref{prop:fb-slicing} implies that we can construct a compact, connected, orientable hypersurface $\Sigma \subset M$ which is a free boundary area minimizer in its relative homology class within $(M, e^{\frac{2}{n-1}\varphi} g)$. Moreover, this hypersurface $\Sigma$ satisfies
\[
\int_{\partial \Sigma} \Xi \wedge \Theta_2 \wedge \cdots \wedge \Theta_{n-2} \neq 0.
\]
Let 
\[
\Sigma_t := \{\exp_x(w(x,t)\nu_{\Sigma}(x)) : x \in \Sigma\}
\]
be the foliation constructed in Lemma \ref{lem:fb-higher-foliation} for $t \in (-\delta,\delta)$. Clearly, $\Sigma_t$ is relative homologous to $\Sigma$ hence 
\[
\int_{\partial \Sigma_t} \Xi \wedge \Theta_2 \wedge \cdots \wedge \Theta_{n-2} \neq 0.
\]

We define the lapse function $f_t: \Sigma_t \to \mathbb{R}$ as follows: at the point $\exp_x(w(x,t)\nu_{\Sigma}(x))$, we set
\begin{equation}{\label{eqn:lapse-function}}
f_t := \left\langle \nu_{\Sigma_t}(x), \frac{\partial}{\partial t}\exp_x(w(x,t)\nu_{\Sigma}(x)) \right\rangle,
\end{equation}
where $\nu_{\Sigma_t}(x)$ is the unit normal vector of $\Sigma_t$ at $\exp_x(w(x,t)\nu_{\Sigma}(x))$.

By choosing $\delta$ smaller if necessary, we may assume $f_t > 0$ on $\Sigma_t$ for all $t \in (-\delta,\delta)$. Since $\mu(t) := H_{\Sigma_t} + \langle \nabla^M \varphi, \nu_{\Sigma_t}\rangle$ depends only on $t$, the lapse function $f_t$ satisfies the Jacobi equation
\begin{equation}\label{eqn:fb-lapse-PDE}
\begin{split}
    &-\Delta_{\Sigma_t} f_t - \ric_{M}(\nu_{\Sigma_t},\nu_{\Sigma_t})f_t - |h_{\Sigma_t}|^2 f_t \\
    &\quad + (D^2_{M}\varphi)(\nu_{\Sigma_t}, \nu_{\Sigma_t}) f_t - \langle \nabla^{\Sigma_t}\varphi, \nabla^{\Sigma_t}f_t \rangle = \mu'(t),
\end{split}
\end{equation}
with the Neumann boundary condition
\begin{equation}\label{eqn:fb-lapse-PDE-Neumann}
\langle \nabla^{\Sigma_t}f_t, \eta \rangle = h_{\partial M}(\nu_{\Sigma_t}, \nu_{\Sigma_t}) f_t.
\end{equation}

\begin{lemma}\label{lem:fb-Ht-sign}
    We have 
    \[
    H_{\Sigma_t} + \langle \nabla^M \varphi, \nu_{\Sigma_t}\rangle \geq 0 \text{ for all } t \in (-\delta,0]
    \]
    and
    \[
    H_{\Sigma_t} + \langle \nabla^M \varphi, \nu_{\Sigma_t}\rangle \leq 0 \text{ for all } t \in [0,\delta).
    \]
\end{lemma}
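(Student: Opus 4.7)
The plan is to derive a Riccati-type differential inequality $\mu'(t) \le -c(t)\mu(t)^2$ with $c(t) > 0$, where we write $\mu(t) := H_{\Sigma_t} + \langle \nabla^M\varphi, \nu_{\Sigma_t}\rangle$. Combined with $\mu(0) = 0$, this forces $\mu$ to be nonincreasing on $(-\delta,\delta)$, which yields exactly the claimed sign pattern. The fact that $\mu(0) = 0$ comes for free: since the foliation leaves each $\Sigma_t$ in the relative homology class of $\Sigma_0$, and the variation vector field is tangent to $\partial M$ along $\partial\Sigma_t$, the first variation of the weighted area is $\mathcal{A}'(t) = \mu(t)\int_{\Sigma_t}f_t\,e^{\varphi}$, and the minimizing property of $\Sigma_0$ gives $\mathcal{A}'(0)=0$.

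To produce the Riccati inequality, I would first apply Proposition \ref{prop:Gauss} on $\Sigma_t \subset M$ with $\rho = e^{\varphi}$, $u = f_t$, $v = e^{\varphi}f_t$. Substituting the Jacobi PDE (\ref{eqn:fb-lapse-PDE}) into the right-hand side of Proposition \ref{prop:Gauss} and applying the Gauss equation $R_{\Sigma_t} = R_M - 2\ric(\nu_{\Sigma_t},\nu_{\Sigma_t}) + H_{\Sigma_t}^2 - |h_{\Sigma_t}|^2$, the identity simplifies, after absorbing $H_{\Sigma_t}^2 + 2H_{\Sigma_t}\langle\nabla\varphi,\nu_{\Sigma_t}\rangle$ into $\mu(t)^2 - \langle\nabla\varphi,\nu_{\Sigma_t}\rangle^2$ and reconstituting $|\nabla^M\varphi|^2$, to
\[
-2\Delta_{\Sigma_t}\log v - |\nabla^{\Sigma_t}\log v|^2 + R_{\Sigma_t} \;\ge\; \mu(t)^2 + |h_{\Sigma_t}|^2 + |\nabla^{\Sigma_t}\log f_t|^2 + \frac{2\mu'(t)}{f_t},
\]
where the hypothesis $-2\Delta_M\varphi - |\nabla^M\varphi|^2 + R_M \ge 0$ absorbs the remaining scalar-curvature-type term. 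Separately, the free-boundary condition $\Sigma_t\perp\partial M$ gives the pointwise identity $H_{\partial M} = H_{\partial\Sigma_t} + h_{\partial M}(\nu_{\Sigma_t},\nu_{\Sigma_t})$, which combined with the Neumann condition (\ref{eqn:fb-lapse-PDE-Neumann}) $\langle\nabla^{\Sigma_t}\log f_t,\eta\rangle = h_{\partial M}(\nu_{\Sigma_t},\nu_{\Sigma_t})$ yields $H_{\partial\Sigma_t} + \langle\nabla^{\Sigma_t}\log v,\eta\rangle = H_{\partial M} + \langle\nabla^M\varphi,\eta\rangle \ge 1$ along $\partial\Sigma_t$.

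With these in hand, I would iterate the slicing of Proposition \ref{prop:fb-slicing} inside $(\Sigma_t,v)$, using the maps $\theta_2,\dots,\theta_{n-2}$ and $\xi$ inherited from $M$, to produce a connected 2-dimensional free-boundary weighted area minimizer $S_t\subset\Sigma_t$ with $|\partial S_t|\ge\sigma(M,g)=2\pi$. Property (x) of Proposition \ref{prop:fb-slicing} preserves the pointwise lower bound $\mu(t)^2 + 2\mu'(t)/f_t$ after discarding nonnegative remainders, and property (xi) preserves the boundary identity above. Integrating the pointwise bound over $S_t$ and applying the Gauss--Bonnet inequality $\int_{S_t} R_{S_t} + 2\int_{\partial S_t}H_{\partial S_t} \le 4\pi$ (since $S_t$ is a connected orientable surface with boundary, so $\chi(S_t)\le 1$), together with $|\partial S_t|\ge 2\pi$ and the boundary identity, gives
\[
0 \;\ge\; 4\pi - 2|\partial S_t| \;\ge\; \mu(t)^2\,|S_t| + 2\mu'(t)\int_{S_t}\frac{1}{f_t},
\]
which is the desired Riccati inequality. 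The main obstacle I expect is the algebraic simplification in the middle paragraph: correctly combining the Gauss equation with the Jacobi PDE inside Proposition \ref{prop:Gauss} so that the various cross terms---in particular the $\langle\nabla^{\Sigma_t}\varphi,\nabla^{\Sigma_t}\log f_t\rangle$, $(D^2_M\varphi)(\nu_{\Sigma_t},\nu_{\Sigma_t})$, and $H_{\Sigma_t}\langle\nabla\varphi,\nu_{\Sigma_t}\rangle$ contributions---cancel cleanly to leave the transparent nonnegative right-hand side $|h_{\Sigma_t}|^2 + |\nabla^{\Sigma_t}\log f_t|^2$ plus the stabilized scalar curvature of $M$.
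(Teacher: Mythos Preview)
Your proposal is correct and follows essentially the same route as the paper: apply Proposition~\ref{prop:Gauss} on $\Sigma_t\subset M$ with $\rho=e^{\varphi}$ and $u=f_t$, substitute the Jacobi PDE~\eqref{eqn:fb-lapse-PDE} into the right-hand side, then feed the resulting pointwise lower bound on the stabilized scalar curvature of $(\Sigma_t,\log v)$ into the systolic argument (the paper invokes Theorem~\ref{thm:fb-systol} directly; you unpack it via Proposition~\ref{prop:fb-slicing}). The only cosmetic difference is that the paper argues by contradiction---assuming $\mu'(t_0)>0$, setting $\tau=\min_{\Sigma_{t_0}}\mu'(t_0)/f_{t_0}>0$, and deriving $\sigma(\Sigma_{t_0},g|_{\Sigma_{t_0}})\le 2\pi-\tau|\Sigma|<2\pi$---whereas you keep the $\mu(t)^2$ term and obtain the direct Riccati inequality $\mu(t)^2|S_t|+2\mu'(t)\int_{S_t}f_t^{-1}\le 0$, which gives $\mu'(t)\le 0$ at once. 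Your worry about the algebraic simplification is unwarranted: Proposition~\ref{prop:Gauss} already packages the Gauss equation and all cross-term cancellations, so with $\log\rho=\varphi$ the substitution of~\eqref{eqn:fb-lapse-PDE} yields $2\mu'(t)/f_t$ on the right-hand side in one step and the displayed inequality drops out immediately.
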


\begin{proof}
    It is sufficient to show that $\mu'(t) \leq 0$ for all $t \in (-\delta,\delta)$, where $\mu(t) := H_{\Sigma_t} + \langle \nabla^M \varphi, \nu_{\Sigma_t}\rangle$.

    Suppose there exists some $t_0 \in (-\delta,\delta)$ with $\mu'(t_0) > 0$. Choose a constant $\tau > 0$ such that 
    \[
    \frac{\mu'(t_0)}{f_{t_0}} \geq \tau
    \]
    at every point of $\Sigma_{t_0}$. Then, from the Jacobi equation \eqref{eqn:fb-lapse-PDE}, we have
    \begin{align*}
        &-\Delta_{\Sigma_{t_0}} f_{t_0} - \ric_{M}(\nu_{\Sigma_{t_0}}, \nu_{\Sigma_{t_0}})f_{t_0} - |h_{\Sigma_{t_0}}|^2 f_{t_0} \\
        &\quad + (D^2_M \varphi)(\nu_{\Sigma_{t_0}}, \nu_{\Sigma_{t_0}})f_{t_0} - \langle \nabla^{\Sigma_{t_0}}\varphi, \nabla^{\Sigma_{t_0}} f_{t_0} \rangle \geq \tau f_{t_0}.
    \end{align*}
    By Proposition \ref{prop:Gauss}, let $v := e^{\varphi}|_{\Sigma_{t_0}} \cdot f_{t_0}$, we obtain
    \begin{align*}
        &-2\Delta_{\Sigma_{t_0}}\log v - |\nabla^{\Sigma_{t_0}}\log v|^2 + R_{\Sigma_{t_0}} \\
        &\geq -2\Delta_M\varphi - |\nabla^M \varphi|^2 + R_M + 2\tau \\
        &\geq 2\tau
    \end{align*}
    at each point of $\Sigma_{t_0}$.

    Applying Theorem \ref{thm:fb-systol} to $\Sigma_{t_0}$, we conclude
    \[
    2 \inf_{\partial \Sigma_{t_0}} \bigl(H_{\partial \Sigma_{t_0}} + \langle \nabla^{\Sigma_{t_0}}\log v, \eta \rangle\bigr) \cdot \sigma(\Sigma_{t_0}, g|_{\Sigma_{t_0}}) \leq 4\pi - 2\tau |\Sigma|.
    \]
    The Neumann boundary condition \eqref{eqn:fb-lapse-PDE-Neumann} implies that
    \[
    H_{\partial \Sigma_{t_0}} + \langle \nabla^{\Sigma_{t_0}}\log v, \eta \rangle = H_{\partial M} + \langle \nabla^M \varphi, \eta \rangle \geq 1
    \]
    at every point on $\partial \Sigma_{t_0}$.

    It follows that $\sigma(\Sigma_{t_0},g|_{\Sigma_{t_0}})\le 2\pi-\tau |\Sigma|$. This yields a contradiction since we must have $\sigma(\Sigma_{t_0}, g|_{\Sigma_{t_0}}) \geq \sigma(M,g) = 2\pi$. Thus, $\mu'(t) \leq 0$ for all $t \in (-\delta,\delta)$.

    Consequently, $H_{\Sigma_t} + \langle \nabla^M \varphi, \nu_{\Sigma_t}\rangle$ is nonincreasing in $t$. Since it equals 0 at $t=0$, we have $H_{\Sigma_t} + \langle \nabla^M \varphi, \nu_{\Sigma_t}\rangle \geq 0$ for all $t \in (-\delta,0]$ and $H_{\Sigma_t} + \langle \nabla^M \varphi, \nu_{\Sigma_t}\rangle \leq 0$ for all $t \in [0,\delta)$.
\end{proof}

\begin{proposition}\label{prop:fb-weighted-area}
    For each $t \in (-\delta,\delta)$, we have $\displaystyle \int_{\Sigma_t} e^{\varphi} = \int_{\Sigma} e^{\varphi}$.
\end{proposition}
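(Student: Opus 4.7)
The plan is to relate the weighted area $A(t) := \int_{\Sigma_t} e^{\varphi}\,d\vol_g$ to the ordinary area in the conformal metric $\tilde g := e^{\frac{2}{n-1}\varphi}g$ and to apply a two-sided squeeze: the minimizing property of $\Sigma_0=\Sigma$ in $(M,\tilde g)$ gives $A(t)\geq A(0)$, whereas the sign control on $\mu(t)$ from Lemma \ref{lem:fb-Ht-sign} gives $A(t)\leq A(0)$.

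First, I would observe that for any $(n-1)$-dimensional hypersurface $S\subset M$, the induced volume form from $\tilde g$ equals $e^{\varphi}$ times the induced volume form from $g$ (since the conformal factor $e^{\frac{2}{n-1}\varphi}$ raised to the power $(n-1)/2$ gives $e^{\varphi}$). Thus $A(t)=\area_{\tilde g}(\Sigma_t)$. Because each $\Sigma_t$ is obtained from $\Sigma_0=\Sigma$ via the ambient deformation $x\mapsto \exp_x(w(x,t)\nu_\Sigma(x))$ and has boundary lying in $\partial M$, the hypersurfaces $\Sigma_t$ all lie in the same free boundary relative homology class as $\Sigma$. Since $\Sigma$ is a free boundary homologically area minimizer in $(M,\tilde g)$ by our application of Proposition \ref{prop:fb-slicing}, we conclude $A(t)\geq A(0)$ for every $t\in(-\delta,\delta)$.

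Next, I would differentiate $A(t)$ using the weighted first variation formula. Writing the variation vector field as $f_t\nu_{\Sigma_t}+X^T$ with $f_t$ the lapse function defined in \eqref{eqn:lapse-function}, the tangential contribution integrates by parts to a boundary term on $\partial\Sigma_t$. The free boundary condition $\partial\Sigma_t\subset\partial M$ together with the orthogonality $\nu_{\Sigma_t}\perp\eta$ (and hence $\nu_{\Sigma_t}\in T(\partial M)$ along $\partial\Sigma_t$) makes this boundary term vanish, yielding
\begin{equation*}
A'(t)=\int_{\Sigma_t}\bigl(H_{\Sigma_t}+\langle\nabla^M\varphi,\nu_{\Sigma_t}\rangle\bigr)\,f_t\,e^{\varphi}\,d\vol_g=\mu(t)\int_{\Sigma_t}f_t\,e^{\varphi}\,d\vol_g,
\end{equation*}
where I have used the fact from Lemma \ref{lem:fb-higher-foliation} that $\mu(t):=H_{\Sigma_t}+\langle\nabla^M\varphi,\nu_{\Sigma_t}\rangle$ is constant on each leaf.

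Finally, since $f_t>0$ on each $\Sigma_t$, Lemma \ref{lem:fb-Ht-sign} gives $A'(t)\geq 0$ on $(-\delta,0]$ and $A'(t)\leq 0$ on $[0,\delta)$, so $A(t)\leq A(0)$ on the entire interval. Combined with the minimization inequality from the first step, we conclude $A(t)=A(0)$ for all $t\in(-\delta,\delta)$. No substantive obstacle arises in this argument; the only subtlety is the vanishing of the boundary term in the first variation, which is the free boundary condition built into the foliation.
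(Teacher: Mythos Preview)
Your proposal is correct and follows essentially the same route as the paper: the first variation of the weighted area together with Lemma \ref{lem:fb-Ht-sign} gives $A(t)\le A(0)$, while the free boundary area-minimizing property of $\Sigma$ in $(M,e^{\frac{2}{n-1}\varphi}g)$ gives $A(t)\ge A(0)$. Your explicit identification $A(t)=\area_{\tilde g}(\Sigma_t)$ and your discussion of why the boundary term vanishes are helpful elaborations, but the underlying argument matches the paper's.
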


\begin{proof}
    Since $\langle \nu_{\Sigma_t}, \eta \rangle = 0$, applying the first variation formula for weighted areas gives:
    \[
    \frac{d}{dt}\left(\int_{\Sigma_t}e^{\varphi}\right) = \int_{\Sigma_t} (H_{\Sigma_t} + \langle \nabla^M\varphi, \nu_{\Sigma_t} \rangle)e^{\varphi} f_t,
    \]
    where $f_t > 0$ for all $t \in (-\delta,\delta)$. By Lemma \ref{lem:fb-Ht-sign}, we have $H_{\Sigma_t} + \langle \nabla^M \varphi, \nu_{\Sigma_t} \rangle \ge 0$ for $t \leq 0$ and $H_{\Sigma_t} + \langle \nabla^M \varphi, \nu_{\Sigma_t} \rangle \le 0$ for $t \geq 0$. This ensures that
    \[
    \int_{\Sigma_t}e^{\varphi} \leq \int_{\Sigma} e^{\varphi}
    \]
    for $t \in (-\delta,\delta)$. 

    On the other hand, since $\Sigma_t$ is relatively homologous to $\Sigma$ for each $t \in (-\delta,\delta)$, the definition of $\Sigma$ as a free boundary area minimizer implies
    \[
    \int_{\Sigma_t} e^{\varphi} \geq \int_{\Sigma} e^{\varphi}
    \]
    for all $t \in (-\delta,\delta)$, completing the proof.
\end{proof}

\begin{corollary}\label{cor:fb-leaf-rigidity}
    For each $t \in (-\delta,\delta)$, the following statements hold:
    \begin{itemize}
        \item $\Sigma_t$ is totally geodesic, with $\ric_M(\nu_{\Sigma_t}, \nu_{\Sigma_t}) = 0$ and $R_M = 0$ at every point on $\Sigma_t$.
        \item $\varphi|_{\Sigma_t}$ is constant, with $\nabla^M \varphi = 0$ and $(D^2_M\varphi)(\nu_{\Sigma_t}, \nu_{\Sigma_t}) = 0$ at every point on $\Sigma_t$.
        \item $h_{\partial M}(\nu_{\Sigma_t}, \nu_{\Sigma_t}) = 0$ and $H_{\partial \Sigma_t} = 1$ at every point on $\partial \Sigma_t$.
        \item The function $f_t$ is constant on $\Sigma_t$.
    \end{itemize}
\end{corollary}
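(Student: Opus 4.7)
My plan is to chain together the three preceding ingredients — Proposition \ref{prop:fb-weighted-area}, Lemma \ref{lem:fb-Ht-sign}, and Lemma \ref{lem:fb-Jacobi-operator} — to conclude that each leaf $\Sigma_t$ inherits the same rigidity as the initial minimizer $\Sigma$, and then to read off constancy of $f_t$ from its own Jacobi equation.

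The first step is to upgrade the one-sided mean curvature information from Lemma \ref{lem:fb-Ht-sign} to the assertion $\mu(t) \equiv 0$, where $\mu(t) := H_{\Sigma_t} + \langle \nabla^M\varphi, \nu_{\Sigma_t}\rangle$. Since $\mu(t)$ is constant along each leaf (Lemma \ref{lem:fb-higher-foliation}) and $\Sigma_t$ meets $\partial M$ orthogonally, the first variation formula for weighted area gives
\[
0 \;=\; \frac{d}{dt}\int_{\Sigma_t} e^{\varphi} \;=\; \mu(t)\int_{\Sigma_t} e^{\varphi} f_t,
\]
using Proposition \ref{prop:fb-weighted-area} on the left and $f_t>0$, $e^{\varphi}>0$ on the right. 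Hence $\mu(t) = 0$ for every $t \in (-\delta,\delta)$.

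The second step is to verify that Lemma \ref{lem:fb-Jacobi-operator} applies to every leaf $\Sigma_t$, not just $\Sigma$. Because $\mu(t)=0$, the hypersurface $\Sigma_t$ is a free boundary minimal hypersurface of the conformally changed metric $e^{\frac{2}{n-1}\varphi}g$, and the conformal factor converts weighted area in $g$ into area in $e^{\frac{2}{n-1}\varphi}g$. Since $\Sigma_t$ is relatively homologous to $\Sigma$ with the same weighted area, it is itself a free boundary area minimizer in its relative homology class, hence stable. The non-triviality condition $\int_{\partial\Sigma_t}\Xi\wedge\Theta_2\wedge\cdots\wedge\Theta_{n-2} \neq 0$ has already been noted from the homology. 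Applying Lemma \ref{lem:fb-Jacobi-operator} to each $\Sigma_t$ yields the first three bullets of the corollary.

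Finally I would substitute these conclusions into the Jacobi equation \eqref{eqn:fb-lapse-PDE}. Since $h_{\Sigma_t}=0$, $\ric_M(\nu_{\Sigma_t},\nu_{\Sigma_t})=0$, $(D^2_M\varphi)(\nu_{\Sigma_t},\nu_{\Sigma_t})=0$, and $\varphi|_{\Sigma_t}$ is constant (so $\nabla^{\Sigma_t}\varphi = 0$), and because $\mu(t)\equiv 0$ forces $\mu'(t)=0$, equation \eqref{eqn:fb-lapse-PDE} degenerates to $-\Delta_{\Sigma_t}f_t = 0$. Likewise $h_{\partial M}(\nu_{\Sigma_t},\nu_{\Sigma_t})=0$ reduces the Neumann condition \eqref{eqn:fb-lapse-PDE-Neumann} to $\langle\nabla^{\Sigma_t}f_t,\eta\rangle = 0$. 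Since $\Sigma_t$ is compact and connected, a harmonic function satisfying homogeneous Neumann data must be constant, giving the fourth bullet. I do not anticipate a real obstacle — the only point requiring a little care is confirming that the stability/minimizing property passes from $\Sigma$ to $\Sigma_t$ so that Lemma \ref{lem:fb-Jacobi-operator} may be invoked leaf-by-leaf.
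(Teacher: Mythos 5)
Your proposal is correct and follows essentially the same route as the paper: use Proposition \ref{prop:fb-weighted-area} to show each leaf $\Sigma_t$ is a free boundary area minimizer (hence Lemma \ref{lem:fb-Jacobi-operator} applies), then substitute the resulting identities into the Jacobi equation \eqref{eqn:fb-lapse-PDE} with its Neumann condition \eqref{eqn:fb-lapse-PDE-Neumann} to see $f_t$ is a Neumann-harmonic function on a compact connected manifold, hence constant. You spell out the intermediate step $\mu(t)\equiv 0$ explicitly via the first variation formula, which the paper leaves implicit, but the content and logical structure are the same.
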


\begin{proof}
    By Proposition \ref{prop:fb-weighted-area}, for each $t \in (-\delta, \delta)$, the hypersurface $\Sigma_t$ is a free boundary area minimizer in its relative homology class within $(M, e^{\frac{2}{n-1}\varphi} g)$, and it satisfies
    \[
    \int_{\partial \Sigma_t} \Xi \wedge \Theta_2 \wedge \cdots \wedge \Theta_{n-2} \neq 0.
    \] 
    Therefore, the first three statements follow directly from Lemma \ref{lem:fb-Jacobi-operator}.

    With these established, the Jacobi equation \eqref{eqn:fb-lapse-PDE} for the lapse function $f_t$ simplifies to $\Delta_{\Sigma_t} f_t = 0$, together with the Neumann boundary condition \eqref{eqn:fb-lapse-PDE-Neumann} stating $\langle \nabla^{\Sigma_t} f_t, \eta \rangle = 0$. Consequently, $f_t$ must be constant on $\Sigma_t$.
\end{proof}

\begin{corollary}\label{cor:fb-local-isometry}
    If $\Sigma \subset M$ is a compact, connected, orientable hypersurface that is a free boundary area minimizer in its relative homology class within $(M, e^{\frac{2}{n-1}\varphi} g)$ and satisfies
    \[
    \int_{\partial \Sigma} \Xi \wedge \Theta_2 \wedge \cdots \wedge \Theta_{n-2} \neq 0,
    \]
    then there exists a neighborhood $U$ of $\Sigma$ that is isometric to a Riemannian product, and $\varphi$ is constant on $U$.
\end{corollary}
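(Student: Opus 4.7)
The plan is to foliate a neighborhood $U$ of $\Sigma$ by totally geodesic leaves and then reparametrize so that the foliation travels at unit speed; the resulting Fermi-type coordinates will automatically put $g|_U$ in product form. Since $\Sigma$ is a free boundary weighted area minimizer in $(M, e^{\frac{2}{n-1}\varphi}g)$, it is in particular stable, so Lemma \ref{lem:fb-higher-foliation} produces a foliation $\{\Sigma_t\}_{t \in (-\delta, \delta)}$ of a neighborhood $U$ of $\Sigma$ by free boundary hypersurfaces on which the weighted mean curvature $H_{\Sigma_t} + \langle \nabla^M\varphi, \nu_{\Sigma_t}\rangle$ is constant. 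The arguments of Lemma \ref{lem:fb-Ht-sign} and Proposition \ref{prop:fb-weighted-area} apply verbatim in this setting to show that each $\Sigma_t$ is itself a free boundary weighted area minimizer in its relative homology class. Invoking Corollary \ref{cor:fb-leaf-rigidity} then yields that every $\Sigma_t$ is totally geodesic, that $\nabla^M \varphi$ vanishes along $\Sigma_t$, and that the lapse function $f_t$ defined in \eqref{eqn:lapse-function} is constant on each leaf.

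Next I would reparametrize in $t$. Since $f_t$ depends only on $t$ and is strictly positive, the substitution $s(t) := \int_0^t f_\tau\, d\tau$ yields a unit-speed flow along the foliation. In the associated Fermi coordinates $(s, x)$, the metric on $U$ takes the form $g = ds^2 + g_s$, where $g_s$ denotes the induced metric on $\Sigma_s$. The standard evolution identity $\partial_s g_s = 2 h_{\Sigma_s}$, combined with $h_{\Sigma_s} \equiv 0$ from the previous step, forces $g_s \equiv g_0$, so $g|_U$ is isometric to the Riemannian product $((-s_0, s_0) \times \Sigma,\, ds^2 + g_\Sigma)$. Finally, since $\nabla^M \varphi = 0$ at every point of every leaf and the leaves sweep out $U$, the function $\varphi$ is constant on $U$.

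I expect no genuine obstacle here: the substantive work has already been done in the leafwise rigidity statement Corollary \ref{cor:fb-leaf-rigidity}, and the remaining step is only the bookkeeping that converts leafwise rigidity into a product structure via the Fermi-coordinate reparametrization using the leaf-constant lapse.
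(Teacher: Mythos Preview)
Your approach matches the paper's: invoke Corollary~\ref{cor:fb-leaf-rigidity} to get totally geodesic leaves, $\nabla^M\varphi=0$, and leaf-constant lapse, then read off the product splitting; the paper compresses your second paragraph into the single assertion that ``the normal vector field $\nu_{\Sigma_t}$ is parallel,'' citing \cite{Ambrozio} and \cite{BBN}. One point worth tightening: after reparametrizing to unit lapse, the graph coordinates $(s,x)\mapsto\exp_x\bigl(w(x,t(s))\,\nu_\Sigma(x)\bigr)$ are not automatically orthogonal, since $\partial_s=\tfrac{1}{f_t}\partial_t\Phi$ may carry a tangential shift whenever $w(\cdot,t)$ is nonconstant in $x$; thus the form $g=ds^2+g_s$ does not follow from constancy of $f_t$ alone. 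The missing step is exactly that $\nu_{\Sigma_t}$ is parallel (equivalently, the normal geodesics from $\Sigma$ meet each $\Sigma_t$ orthogonally), which is what the paper's citations provide and after which your identity $\partial_s g_s=2h_{\Sigma_s}=0$ finishes the argument.
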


\begin{proof}
    By Corollary \ref{cor:fb-leaf-rigidity}, in a neighborhood around $\Sigma$, we have $\nabla^M \varphi = 0$ and the normal vector field $\nu_{\Sigma_t}$ is parallel (see \cite{Ambrozio} and \cite{BBN}). Consequently, a neighborhood $U$ of $\Sigma$ is isometric to a Riemannian product, and $\varphi$ is constant on $U$.
\end{proof}

Finally, we define $\Phi: \Sigma \times \mathbb{R} \to M$ by $\Phi(x,t) = \exp_x(t\nu_{\Sigma}(x))$.

\begin{theorem}\label{thm:fb-covering}
    The function $\varphi$ is constant on $M$, and the map $\Phi$ is a local isometry with $\Phi(\partial \Sigma \times \mathbb{R}) \subset \partial M$.
\end{theorem}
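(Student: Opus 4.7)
The plan is to extend the local foliation $\{\Sigma_t\}$ from Lemma \ref{lem:fb-higher-foliation} to a global one parametrized by $t \in \mathbb{R}$ via an open--closed argument, and then to deduce surjectivity of $\Phi$ by path lifting. By Corollary \ref{cor:fb-leaf-rigidity}, the lapse function $f_t$ is constant on each initial leaf; after reparametrizing $t$ by arc length we may assume $f_t \equiv 1$, so that the transversal curves agree with the unit-speed normal geodesics $t \mapsto \exp_x(t\nu_\Sigma(x))$. Combined with Corollary \ref{cor:fb-local-isometry}, this exhibits $\Phi$ as a local isometry on $\Sigma \times (-\delta_0, \delta_0)$ for some $\delta_0 > 0$, sending $\partial\Sigma \times (-\delta_0, \delta_0)$ into $\partial M$.

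Next I would let $T_+$ be the supremum of $T > 0$ for which $\Phi|_{\Sigma \times [0,T)}$ is a local isometry sending $\partial\Sigma \times [0,T)$ into $\partial M$ and for which every leaf $\Sigma_t = \Phi(\Sigma \times \{t\})$ is a free boundary area minimizer in its relative homology class with $\int_{\partial \Sigma_t} \Xi \wedge \Theta_2 \wedge \cdots \wedge \Theta_{n-2} \neq 0$. To rule out $T_+ < \infty$, I would extract a smooth limit leaf $\Sigma_{T_+}$ using the fact that every $\Sigma_t$ is an isometric copy of $\Sigma$ sitting in the compact manifold $M$ with uniformly controlled geometry. Continuity of the homological pairings places $\Sigma_{T_+}$ in the correct class, and since its weighted area agrees with that of $\Sigma$ (Proposition \ref{prop:fb-weighted-area}), $\Sigma_{T_+}$ is still a free boundary area minimizer in its class. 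Corollary \ref{cor:fb-local-isometry} then produces a Riemannian product neighborhood of $\Sigma_{T_+}$, which extends the foliation past $T_+$ and contradicts maximality. A symmetric argument gives $T_- = -\infty$, yielding a global local isometry $\Phi: \Sigma \times \mathbb{R} \to M$ with $\Phi(\partial\Sigma \times \mathbb{R}) \subset \partial M$.

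To conclude, since $\Phi$ is a local isometry from the complete Riemannian manifold $\Sigma \times \mathbb{R}$ into the connected manifold $M$, the standard path-lifting argument shows that $\Phi$ is a Riemannian covering map onto its image; because the image is open and connected and $M$ is connected, $\Phi$ must be surjective. Corollary \ref{cor:fb-leaf-rigidity} provides $\nabla^M \varphi = 0$ along every leaf $\Sigma_t$, and since the leaves cover $M$, this forces $\varphi$ to be constant on $M$.

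The main technical obstacle will be the closedness step, where one must verify both the smooth subconvergence of the family $\{\Sigma_t\}_{t < T_+}$ and the persistence of the minimizer property together with the nontrivial homological pairing in the limit leaf. The local isometry structure on $\Sigma \times [0, T_+)$ should make both properties follow from continuity, but some care is needed to control the regularity of $\Sigma_{T_+}$ up to $\partial M$ in the free boundary setting before invoking Corollary \ref{cor:fb-local-isometry}.
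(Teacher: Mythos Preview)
Your continuation argument is essentially the paper's, but the paper handles the step you flag as the main obstacle more cleanly. Instead of taking $T_+$ to be a supremum over \emph{open} intervals and then extracting a limit leaf by compactness, the paper works with \emph{closed} intervals: it lets $\tau$ be the largest real number for which $\Phi|_{\Sigma\times[0,\tau]}$ is already a local isometry, $\Phi(\partial\Sigma\times[0,\tau])\subset\partial M$, and $\varphi\equiv\varphi_0$ on the image. Because $\Phi(x,t)=\exp_x(t\nu_\Sigma(x))$ is given explicitly and all three conditions are closed under limits in $t$, the supremum is automatically attained and the ``limit leaf'' is just $\hat\Sigma:=\Phi(\Sigma\times\{\tau\})$, with no need for subconvergence or free-boundary regularity arguments. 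Since $\Phi$ is an isometry on the slab $\Sigma\times[0,\tau]$, the leaf $\hat\Sigma$ is relatively homologous to $\Sigma$ with the same weighted area, hence is again a free boundary minimizer, and Corollary~\ref{cor:fb-local-isometry} pushes the product structure past $\tau$. This dissolves the technical worry in your last paragraph; your surjectivity and $\nabla^M\varphi=0$ conclusions are then the same as the paper's.
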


\begin{proof}
    By Corollary \ref{cor:fb-local-isometry}, $\varphi$ is constant in a neighborhood of $\Sigma$, say $\varphi_0$, and there exists a small $\delta > 0$ such that $\Phi|_{\Sigma \times (-\delta,\delta)}$ is a local isometry with $\Phi(\partial \Sigma \times (-\delta,\delta)) \subset \partial M$.

    Define $\tau$ to be the largest real number for which $\Phi|_{\Sigma \times [0,\tau]}$ is a local isometry, $\Phi(\partial \Sigma \times [0,\tau]) \subset \partial M$, and $\varphi = \varphi_0$ on $\Phi(\Sigma \times [0,\tau])$.

    Consider the compact, connected, orientable hypersurface
    \[
    \hat{\Sigma} := \{\exp_x(\tau \nu_{\Sigma}(x)) : x \in \Sigma\},
    \]
    with $\partial \hat{\Sigma} \subset \partial M$, and $\hat{\Sigma}$ meets $\partial M$ orthogonally. Moreover, $\hat{\Sigma}$ is relatively homologous to $\Sigma$ and
    \[
    \int_{\hat{\Sigma}} e^{\varphi} = e^{\varphi_0} \vol(\hat{\Sigma}) = e^{\varphi_0} \vol(\Sigma) = \int_{\Sigma} e^{\varphi}.
    \]
    Therefore, $\hat{\Sigma}$ is also a free boundary area minimizer in its relative homology class within $(M, e^{\frac{2}{n-1}\varphi}g)$, satisfying
    \[
    \int_{\partial \hat{\Sigma}} \Xi \wedge \Theta_2 \wedge \cdots \wedge \Theta_{n-2} \neq 0.
    \]

    By Corollary \ref{cor:fb-local-isometry}, there is a neighborhood of $\hat{\Sigma}$ on which $\varphi$ is constant and isometric to a Riemannian product. Hence, for some small $\delta > 0$, $\Phi|_{\Sigma \times [0,\tau+\delta)}$ is a local isometry with $\Phi(\partial \Sigma \times [0,\tau+\delta)) \subset \partial M$ and $\varphi = \varphi_0$ on $\Phi(\Sigma \times [0,\tau+\delta))$.

    This contradicts the maximality of $\tau$. Thus, we must have $\Phi|_{\Sigma \times [0,\infty)}$ as a local isometry with $\Phi(\partial \Sigma \times [0,\infty)) \subset \partial M$ and $\varphi = \varphi_0$ on $\Phi(\Sigma \times [0,\infty))$. An analogous argument applies to $\Phi|_{\Sigma \times (-\infty,0]}$, concluding that $\Phi$ is a local isometry on $\Sigma \times \mathbb{R}$, and $\varphi$ is constant on $M$.
\end{proof}

We are now ready to complete the proof of Theorem \ref{thm:fb-rigidity}. By Theorem \ref{thm:fb-covering}, the map $\Phi: \Sigma \times \mathbb{R} \to M$ is a local isometry with $\Phi(\partial \Sigma \times \mathbb{R}) \subset \partial M$. Consequently, $\Phi$ is a covering map. Lemma \ref{lem:fb-inductive-rigidity} showed that the universal cover of $(\Sigma, g|_{\Sigma})$ is isometric to $B^2(1) \times \mathbb{R}^{n-3}$ with the standard product metric. Therefore, the universal cover of $(M, g)$ is isometric to $B^2(1) \times \mathbb{R}^{n-2}$, equipped with the standard product metric.

\section{The Systolic Inequality}{\label{sec:systol}}
In this section, we prove the systolic inequality stated in Theorem \ref{thm:Zhu} that extends Zhu's result in Theorem \cite{Zhu}. 

We assume that \((M^n, g)\) is a closed, connected, orientable Riemannian manifold and that \(\varphi\) is a smooth function on \(M\). Suppose there exists a smooth map
\[
(\omega, \theta_1, \ldots, \theta_{n-2}) : M \to S^2 \times T^{n-2}
\]
with nonzero degree. Denote by \(\Omega\) the pullback of the volume form on \(S^2\) under the map \(\omega : M \to S^2\), and by \(\Theta_i\) the pullback of the volume form on \(S^1\) under the map \(\theta_i : M \to S^1\). Hence,
\[
\int_M \Omega \wedge \Theta_1 \wedge \cdots \wedge \Theta_{n-2} \neq 0.
\]
We define
\[
\mathscr{A}(M,g) := \inf \left\{ \mathcal{H}^2(\Sigma) : \Sigma \text{ is a smooth surface of } M \text{ with } \int_{\Sigma} \Omega \neq 0 \right\}.
\]
The following slicing argument is analogous to Proposition \ref{prop:fb-slicing}. For the proof of the slicing argument, we refer to the works of Schoen and Yau \cite{SY-3manifold, SY-higher} and Gromov and Lawson \cite{GL-2}.

\begin{proposition}\label{prop:closed-slicing}
    We can find a collection of compact, connected, orientable submanifolds \(\Sigma_k\), along with a sequence of positive smooth functions \(u_k : \Sigma_k \to \mathbb{R}\) for \(k \in \{1, \ldots, n-2\}\), and a sequence of positive smooth functions \(\rho_k : \Sigma_k \to \mathbb{R}\) for \(k \in \{0, 1, \ldots, n-2\}\), satisfying the following properties:
    \begin{enumerate}[(i)]
        \item \(\Sigma_0 = M\) and \(\rho_0 = e^{\varphi}\).
        \item \(\dim \Sigma_k = n - k\) for each \(k \in \{0, \ldots, n-2\}\).
        \item For each \(k \in \{1, \ldots, n-2\}\),
        \[
        \int_{\Sigma_k} \Omega \wedge \Theta_{k+1} \wedge \cdots \wedge \Theta_{n-2} \neq 0.
        \]
        \item For each \(k \in \{1, \ldots, n-2\}\), \(\Sigma_k\) is a homologically area-minimizing hypersurface in \((\Sigma_{k-1}, \rho_{k-1}^{\frac{2}{n-k}} g_{\Sigma_{k-1}})\).
        \item For each \(k \in \{1, \ldots, n-2\}\),
        \[
        H_{\Sigma_k} + \left\langle \nabla^{\Sigma_{k-1}} \log \rho_{k-1}, \nu_{\Sigma_k} \right\rangle = 0.
        \]
        \item For each \(k \in \{1, \ldots, n-2\}\), the function \(u_k\) satisfies
        \begin{align*}
            &-\Delta_{\Sigma_k} u_k - \ric_{\Sigma_{k-1}}(\nu_{\Sigma_k}, \nu_{\Sigma_k}) u_k - |h_{\Sigma_k}|^2 u_k \\
            &\quad + (D^2_{\Sigma_{k-1}} \log \rho_{k-1})(\nu_{\Sigma_k}, \nu_{\Sigma_k}) u_k - \left\langle \nabla^{\Sigma_k} \log \rho_{k-1}, \nabla^{\Sigma_k} u_k \right\rangle = \lambda_k u_k,
        \end{align*}
        where \(\lambda_k \geq 0\) is a nonnegative constant.
        \item For each \(k \in \{1, \ldots, n-2\}\), we have
        \(
        \rho_k = \rho_{k-1}|_{\Sigma_k} \cdot u_k.
        \)
        \item For each \(k \in \{0, 1, \ldots, n-2\}\),
        \begin{align*}
            &-2\Delta_{\Sigma_k} \log \rho_k - |\nabla^{\Sigma_k} \log \rho_k|^2 + R_{\Sigma_k} \\
            &\quad + 2\Delta_M \varphi + |\nabla^M \varphi|^2 - R_M \\
            &= \sum_{j=1}^k |\nabla^{\Sigma_j} \log u_j|^2 + \sum_{j=1}^k |h_{\Sigma_j}|^2 + 2 \sum_{j=1}^k \lambda_j.
        \end{align*}
    \end{enumerate}
\end{proposition}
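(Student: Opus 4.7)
The plan is to build the sequences $(\Sigma_k, \rho_k, u_k)$ inductively on $k$, exactly as in the proof of Proposition \ref{prop:fb-slicing} but with the free-boundary considerations stripped away. The base case $k = 0$ is a tautology after setting $\Sigma_0 := M$ and $\rho_0 := e^{\varphi}$, for which (i), (ii) and (viii) hold by definition.

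For the inductive step, I equip $\Sigma_{k-1}$ with the conformally rescaled metric $\tilde g_{k-1} := \rho_{k-1}^{2/(n-k)}\, g_{\Sigma_{k-1}}$, so that the $(n-k)$-dimensional $\tilde g_{k-1}$-volume of a hypersurface $\Sigma \subset \Sigma_{k-1}$ coincides with its weighted area $\int_\Sigma \rho_{k-1}\, dA_g$. By the inductive hypothesis (iii), $\int_{\Sigma_{k-1}} \Theta_k \wedge \Omega \wedge \Theta_{k+1} \wedge \cdots \wedge \Theta_{n-2} \neq 0$, so Poincar\'e duality on $\Sigma_{k-1}$ produces an integral homology class $\alpha \in H_{n-k}(\Sigma_{k-1};\mathbb{Z})$ --- namely the class dual to $[\Theta_k]$ --- on which $\Omega \wedge \Theta_{k+1} \wedge \cdots \wedge \Theta_{n-2}$ pairs nontrivially. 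I then minimize $\tilde g_{k-1}$-area in $\alpha$. Since the ambient dimension is at most $n \le 7$, codimension-one regularity (Federer, Simons) delivers a smooth, embedded, orientable minimizer; passing to a connected component on which the pairing remains nonzero gives the required $\Sigma_k$ and verifies (ii), (iii), (iv). The first variation of $\int \rho_{k-1}\, dA_g$ produces (v), and the second variation together with the stability of $\Sigma_k$ produces a principal eigenfunction $u_k > 0$ of the weighted Jacobi operator (self-adjoint with respect to $\rho_{k-1}\, dA_g$) with eigenvalue $\lambda_k \ge 0$, giving (vi). Setting $\rho_k := \rho_{k-1}|_{\Sigma_k}\, u_k$ records (vii).

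For (viii), I apply Proposition \ref{prop:Gauss} with ambient manifold $\Sigma_{k-1}$, hypersurface $\Sigma_k$, weight $\rho := \rho_{k-1}$, and function $u := u_k$, so that $v = \rho_k$. The Euler--Lagrange equation (v) annihilates the term $(H_{\Sigma_k} + \langle \nabla^{\Sigma_{k-1}}\log\rho_{k-1}, \nu_{\Sigma_k}\rangle)^2$, and dividing the Jacobi equation (vi) by $u_k$ lets me replace the right-hand side of Proposition \ref{prop:Gauss} by $2\lambda_k$. The resulting single-step relation
\[
\bigl(-2\Delta_{\Sigma_k}\log\rho_k - |\nabla^{\Sigma_k}\log\rho_k|^2 + R_{\Sigma_k}\bigr) - \bigl(-2\Delta_{\Sigma_{k-1}}\log\rho_{k-1} - |\nabla^{\Sigma_{k-1}}\log\rho_{k-1}|^2 + R_{\Sigma_{k-1}}\bigr) = |\nabla^{\Sigma_k}\log u_k|^2 + |h_{\Sigma_k}|^2 + 2\lambda_k
\]
then telescopes from $j = 1$ to $k$, using $\rho_0 = e^{\varphi}$ and $\Sigma_0 = M$, to yield (viii).

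The main obstacle I anticipate is the homological bookkeeping: ensuring that the area minimizer can be taken to be a smooth, connected, orientable hypersurface on which $\Omega \wedge \Theta_{k+1} \wedge \cdots \wedge \Theta_{n-2}$ still integrates nontrivially at every stage. The dimensional restriction $n \le 7$ enters precisely here through codimension-one interior regularity. Once the geometric setup is in place, the conformal change of the area functional, the weighted first and second variation formulas, and the derivation of (viii) from Proposition \ref{prop:Gauss} are all routine algebraic manipulations.
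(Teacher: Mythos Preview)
Your proposal is correct and follows exactly the standard inductive slicing argument that the paper invokes by reference (to Schoen--Yau, Gromov--Lawson, and the free-boundary analogue Proposition~\ref{prop:fb-slicing}) without spelling out. Your use of Proposition~\ref{prop:Gauss} to obtain the telescoping identity~(viii) is precisely how the corresponding step is carried out in the proof of Proposition~\ref{prop:fb-slicing} in \cite{Brendle-Hung}.
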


\begin{theorem}\label{thm:stabilized-Zhu}
    There exists a closed, connected, orientable surface $\Sigma$ with
    \(\displaystyle
    \int_{\Sigma} \Omega \neq 0
    \)
    and
    \[
    \inf_{M} \bigl(-2\Delta_M \varphi - |\nabla^M \varphi|^2 + R_M \bigr) \cdot |\Sigma| \leq 8\pi.
    \]
\end{theorem}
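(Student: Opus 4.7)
The plan is to mirror the proof of Theorem \ref{thm:fb-systol}, replacing the free-boundary slicing by its closed-manifold counterpart. Concretely, I would apply Proposition \ref{prop:closed-slicing} with $k = n-2$ to obtain a closed, connected, orientable $2$-dimensional submanifold $\Sigma := \Sigma_{n-2}$ with $\int_{\Sigma} \Omega \neq 0$ (property (iii)), together with a positive smooth function $\rho_{n-2}$ on $\Sigma$. Setting $\psi := \log \rho_{n-2}$, property (viii) of the slicing proposition rearranges to
\[
-2\Delta_{\Sigma}\psi - |\nabla^{\Sigma}\psi|^2 + R_{\Sigma} \;\geq\; \inf_{M}\bigl(-2\Delta_M\varphi - |\nabla^M\varphi|^2 + R_M\bigr)
\]
at every point of $\Sigma$, since the discrepancy is $\sum_{j=1}^{n-2} |\nabla^{\Sigma_j}\log u_j|^2 + \sum_{j=1}^{n-2}|h_{\Sigma_j}|^2 + 2\sum_{j=1}^{n-2}\lambda_j$, which is nonnegative.

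The next step is to integrate this inequality over the closed surface $\Sigma$. The term $\int_\Sigma \Delta_\Sigma \psi$ vanishes by Stokes' theorem, and $-|\nabla^\Sigma \psi|^2$ is nonpositive, so
\[
\int_{\Sigma} R_{\Sigma}\,d\vol_{\Sigma} \;\geq\; \inf_{M}\bigl(-2\Delta_M\varphi - |\nabla^M\varphi|^2 + R_M\bigr)\cdot|\Sigma|.
\]
Since $\Sigma$ is a closed, connected, orientable surface, the Gauss--Bonnet theorem gives $\int_\Sigma R_\Sigma\,d\vol_\Sigma = 4\pi \chi(\Sigma) \leq 8\pi$, which chains with the previous display to yield the desired bound.

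There is no genuine obstacle in this argument: the heavy lifting has been packaged into Proposition \ref{prop:closed-slicing}, and the reasoning at the level of the final $2$-dimensional slice is exactly the closed-manifold specialization of Theorem \ref{thm:fb-systol}, with the boundary contributions dropped. One small consistency check worth noting: when $\inf_M(-2\Delta_M\varphi-|\nabla^M\varphi|^2+R_M) > 0$, the chain of inequalities forces $\chi(\Sigma) = 2$, so $\Sigma$ is a $2$-sphere, consistent with the geometric picture $S^2(r)\times \R^{n-2}$ appearing in the equality case of Theorem \ref{thm:Zhu}; when the infimum is nonpositive, the conclusion $\inf \cdot |\Sigma| \le 8\pi$ is automatic.
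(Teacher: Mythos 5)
Your proposal is correct and matches the paper's own proof essentially line for line: apply Proposition \ref{prop:closed-slicing} with $k=n-2$, use property (viii) to get the pointwise inequality for $\psi = \log\rho_{n-2}$, integrate over the closed surface, and conclude via Gauss--Bonnet with $\chi(\Sigma)\leq 2$. The closing consistency check is a nice observation but not needed for the argument.
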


\begin{proof}
    We apply Proposition \ref{prop:closed-slicing} with $k = n - 2$. Let $\Sigma = \Sigma_{n-2}$ and $\psi = \log \rho_{n-2}$. Then, it follows that
    \(\displaystyle
    \int_{\Sigma} \Omega \neq 0
    \)
    and
    \[
    -2\Delta_{\Sigma} \psi - |\nabla^{\Sigma} \psi|^2 + R_{\Sigma} \geq \inf_{M} \bigl(-2\Delta_M \varphi - |\nabla^M \varphi|^2 + R_M \bigr)
    \]
    at each point on $\Sigma$.

    Since $\Sigma$ is connected, its Euler characteristic is at most 2. Integrating the above inequality and applying the Gauss--Bonnet Theorem yields
    \begin{align*}
        8\pi & \geq \int_{\Sigma} R_{\Sigma} \, d\mathrm{vol}_{\Sigma} \\
        &\geq \int_{\Sigma} \left( -2\Delta_{\Sigma} \psi - |\nabla^{\Sigma} \psi|^2 + R_{\Sigma} \right) \\
        &\geq \inf_{M} \bigl(-2\Delta_M \varphi - |\nabla^M \varphi|^2 + R_M \bigr) \cdot |\Sigma|.
    \end{align*}
    This completes the proof.
\end{proof}

\begin{corollary}\label{cor:stabilized-sys}
    We have
    \[
    \inf_{M} \bigl(-2\Delta_M \varphi - |\nabla^M \varphi|^2 + R_M \bigr) \cdot \mathscr{A}(M,g) \leq 8\pi.
    \]
\end{corollary}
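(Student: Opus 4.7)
The plan is to derive the corollary as an essentially immediate consequence of Theorem \ref{thm:stabilized-Zhu}, by matching the surface it produces against the infimum definition of $\mathscr{A}(M,g)$.

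First I would split into two cases according to the sign of $\kappa := \inf_M(-2\Delta_M\varphi - |\nabla^M\varphi|^2 + R_M)$. If $\kappa \leq 0$, then since $\mathscr{A}(M,g) \geq 0$ the inequality $\kappa \cdot \mathscr{A}(M,g) \leq 8\pi$ holds trivially (both sides are non-positive on the left and $8\pi > 0$ on the right). So the substantive case is $\kappa > 0$.

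In the case $\kappa > 0$, I would invoke Theorem \ref{thm:stabilized-Zhu} to obtain a closed, connected, orientable surface $\Sigma \subset M$ satisfying $\int_\Sigma \Omega \neq 0$ and
\[
\kappa \cdot |\Sigma| \leq 8\pi.
\]
Because $\Sigma$ is a competitor in the infimum defining $\mathscr{A}(M,g)$, we have $|\Sigma| = \mathcal{H}^2(\Sigma) \geq \mathscr{A}(M,g)$. Combining these two estimates with $\kappa > 0$ gives
\[
\kappa \cdot \mathscr{A}(M,g) \leq \kappa \cdot |\Sigma| \leq 8\pi,
\]
which is the desired conclusion.

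There is no real obstacle here: the slicing construction and Gauss--Bonnet argument have already done all the geometric work inside the proof of Theorem \ref{thm:stabilized-Zhu}, and the corollary only records the consequence obtained by passing from the specific minimizing slice to the infimum over all admissible surfaces. The only point to be slightly careful about is to handle the sign of $\kappa$ so that multiplying the inequality $|\Sigma| \geq \mathscr{A}(M,g)$ by $\kappa$ preserves its direction, which is why the two-case split above is included.
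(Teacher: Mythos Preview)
Your proposal is correct and follows the same immediate deduction the paper intends: the corollary is stated without proof, as a direct consequence of Theorem~\ref{thm:stabilized-Zhu} and the definition of $\mathscr{A}(M,g)$. Your explicit case split on the sign of $\kappa$ is a sensible way to make the passage from $|\Sigma|\geq \mathscr{A}(M,g)$ to the product inequality fully rigorous, and nothing more is needed.
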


\section{The Equality Case of Theorem \ref{thm:Zhu}}{\label{sec:systol-rigid}}
In this section, we study the equality case of Theorem \ref{thm:Zhu}. Our approach is similar in spirit to the rigidity arguments presented in \cite{BBN}, \cite{intermediate-rigidity}, and \cite{Zhu}. We restate the result here for convenience.

\begin{theorem}\label{thm:stabilized-Zhu-rigidity}
    Suppose
    \[
    \inf_{M} \bigl(-2\Delta_M \varphi - |\nabla^M \varphi|^2 + R_M \bigr) \cdot \mathscr{A}(M,g) = 8\pi.
    \]
    Then \(\varphi\) is constant on \(M\), and the universal cover of \(M\) is isometric to \(S^2(r) \times \mathbb{R}^{n-2}\) with the standard product metric for some \(r > 0\).
\end{theorem}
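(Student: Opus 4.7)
The proof will proceed by induction on the dimension $n$, following the strategy of Section~\ref{sec:fb-rigid} but adapted to the closed setting where boundary contributions disappear. The base case $n=2$ is immediate: any closed surface admitting a map to $S^2$ of nonzero degree is topologically a $2$-sphere, so $\mathscr{A}(M,g)=|M|$, and integrating the pointwise inequality and applying Gauss--Bonnet gives
\[
\inf_M(-2\Delta_M\varphi-|\nabla^M\varphi|^2+R_M)\cdot|M|\leq \int_M R_M-\int_M|\nabla^M\varphi|^2\leq 4\pi\chi(M)\leq 8\pi,
\]
where equality forces $\varphi$ to be constant, $\chi(M)=2$, and $R_M$ constant, so $(M,g)$ is a round $2$-sphere.

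For the inductive step, I rescale the metric so that $\inf_M(\cdot)=1$ and $\mathscr{A}(M,g)=8\pi$, and apply Proposition~\ref{prop:closed-slicing} with $k=1$ to obtain a closed, connected, orientable hypersurface $\Sigma^{n-1}\subset M$ which is a weighted area minimizer in $(M,e^{2\varphi/(n-1)}g)$ and satisfies $\int_\Sigma\Omega\wedge\Theta_2\wedge\cdots\wedge\Theta_{n-2}\neq 0$. Letting $u>0$ be the first eigenfunction of the associated weighted Jacobi operator and setting $\rho=e^\varphi|_\Sigma\cdot u$, Proposition~\ref{prop:Gauss} yields $-2\Delta_\Sigma\log\rho-|\nabla^\Sigma\log\rho|^2+R_\Sigma\geq 1$ on $\Sigma$. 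The trivial inclusion $\mathscr{A}(\Sigma,g|_\Sigma)\geq\mathscr{A}(M,g)=8\pi$ together with Theorem~\ref{thm:stabilized-Zhu} applied to $\Sigma$ forces equality in the $(n-1)$-dimensional case, so by the inductive hypothesis the universal cover of $\Sigma$ is isometric to $S^2(r)\times\mathbb{R}^{n-3}$ and $\rho$ is constant on $\Sigma$. Arguing as in Lemma~\ref{lem:fb-Jacobi-operator} then gives that $\Sigma$ is totally geodesic, $R_M=\ric_M(\nu_\Sigma,\nu_\Sigma)=0$ along $\Sigma$, $\varphi|_\Sigma$ is constant with $\nabla^M\varphi=0$ and $(D^2_M\varphi)(\nu_\Sigma,\nu_\Sigma)=0$, and $\lambda_1=0$ with $u$ constant.

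Next I construct a local foliation by weighted constant mean curvature hypersurfaces via the implicit function theorem: the map $\mathcal{F}(\psi)=(\tilde H_{\Sigma_\psi}-\mathrm{avg}(\tilde H_{\Sigma_\psi}),\mathrm{avg}(\psi))$, with $\tilde H:=H+\langle\nabla^M\varphi,\nu\rangle$, has a linearization at $\psi=0$ that is an isomorphism because the kernel of the weighted Jacobi operator consists of the constants. This yields leaves $\Sigma_t$ with $\tilde H_{\Sigma_t}\equiv\mu(t)$ and a positive lapse $f_t$ satisfying an inhomogeneous weighted Jacobi equation with source $\mu'(t)$. A contradiction argument analogous to Lemma~\ref{lem:fb-Ht-sign} then shows $\mu$ is nonincreasing: if $\mu'(t_0)>0$, setting $v=e^\varphi|_{\Sigma_{t_0}}\cdot f_{t_0}$ and applying Proposition~\ref{prop:Gauss} gives $-2\Delta_{\Sigma_{t_0}}\log v-|\nabla^{\Sigma_{t_0}}\log v|^2+R_{\Sigma_{t_0}}\geq 1+2\tau$ for some $\tau>0$, so Theorem~\ref{thm:stabilized-Zhu} on $\Sigma_{t_0}$ yields $\mathscr{A}(\Sigma_{t_0})\leq 8\pi/(1+2\tau)<\mathscr{A}(M,g)$, contradicting the trivial inclusion. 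The first variation of weighted area and minimality of $\Sigma$ then pin $\int_{\Sigma_t}e^\varphi$ to be constant in $t$, so each leaf is itself a weighted area minimizer and inherits the same leafwise rigidity; the Jacobi equation reduces to $\Delta_{\Sigma_t}f_t=0$, so $f_t$ is a positive constant, $\nu_{\Sigma_t}$ is parallel, and a tubular neighborhood of $\Sigma$ splits isometrically with $\varphi$ locally constant.

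Finally, a maximality argument identical in spirit to Theorem~\ref{thm:fb-covering} extends the local splitting globally, so $\Phi(x,t)=\exp_x(t\nu_\Sigma(x))$ is a local isometry $\Sigma\times\mathbb{R}\to M$ with $\varphi$ constant on $M$, hence a covering map; passing to universal covers identifies $\widetilde M$ with $\widetilde\Sigma\times\mathbb{R}=S^2(r)\times\mathbb{R}^{n-2}$. The main obstacle is the monotonicity step for $\mu(t)$, where one needs the quantitative bound produced by Proposition~\ref{prop:Gauss} at a perturbed leaf to strictly violate the trivial inequality $\mathscr{A}(\Sigma_{t_0})\geq\mathscr{A}(M,g)$; beyond that, the closed setting avoids the boundary bookkeeping of Section~\ref{sec:fb-rigid} and the argument is more direct.
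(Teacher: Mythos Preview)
Your proposal is correct and follows essentially the same route as the paper: induction on $n$, inductive rigidity of a weighted area-minimizing hypersurface via Proposition~\ref{prop:Gauss} and Theorem~\ref{thm:stabilized-Zhu}, a weighted CMC foliation produced by the implicit function theorem, the monotonicity argument for $\mu(t)$, constancy of the weighted area, and a continuation argument extending the local splitting globally. Two harmless slips worth fixing: a closed orientable surface of any genus admits a nonzero-degree map to $S^2$ (so the base case really goes through your Gauss--Bonnet chain, where equality forces $\chi(M)=2$), and with your normalization $\inf_M(\cdot)=1$ one has $R_M=R_\Sigma=1$ along $\Sigma$ rather than $R_M=0$; neither is used downstream.
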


The proof is similar to that of Theorem \ref{thm:fb-rigidity}, and we only indicate the necessary changes.

We proceed by induction on the dimension \(n\). The result follows from the Gauss--Bonnet Theorem for \(n = 2\), and we assume that Theorem \ref{thm:stabilized-Zhu} holds for all Riemannian manifolds with dimension at most \(n - 1\).

Now suppose \((M, g)\) is an \(n\)-dimensional Riemannian manifold with
\[
\inf_{M} \bigl(-2\Delta_M \varphi - |\nabla^M \varphi|^2 + R_M \bigr) \cdot \mathscr{A}(M,g) = 8\pi.
\]
After rescaling the metric, we may assume that 
\[
\inf_{M} \bigl(-2\Delta_M \varphi - |\nabla^M \varphi|^2 + R_M \bigr) = 2
 \quad \text{and} \quad
\mathscr{A}(M,g) = 4\pi.
\]

\begin{lemma}\label{lem:inductive-rigidity}
    Suppose $\Sigma^{n-1} \subset M$ is a closed, connected, orientable, stable minimal hypersurface in $(M, e^{\frac{2}{n-1}\varphi} g)$ with
    \[
    \int_{\Sigma} \Omega \wedge \Theta_2 \wedge \cdots \wedge \Theta_{n-2} \neq 0.
    \]
    Then:
    \begin{itemize}
        \item The universal cover of $\Sigma$ is isometric to $S^2(1) \times \mathbb{R}^{n-3}$ equipped with the standard product metric.
        \item The function $e^{\varphi}|_{\Sigma} \cdot u$ is constant on $\Sigma$, where $u$ is the first eigenfunction of the Jacobi operator
        \begin{align*}
            &-\Delta_{\Sigma} u - \ric_{M}(\nu_{\Sigma}, \nu_{\Sigma}) u - |h_{\Sigma}|^2 u \\
            &\quad + (D^2_{M} \log \rho)(\nu_{\Sigma}, \nu_{\Sigma}) u - \langle \nabla^{\Sigma} \log \rho, \nabla^{\Sigma} u \rangle = \lambda u.
        \end{align*}
    \end{itemize}
\end{lemma}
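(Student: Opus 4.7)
The plan is to mirror the proof of Lemma \ref{lem:fb-inductive-rigidity}, but in the closed setting without any boundary terms. The argument will have three ingredients: a trivial inclusion inequality, a Gauss-equation-based pointwise inequality for the stabilized scalar curvature of $\Sigma$, and the inductive application of Theorem \ref{thm:stabilized-Zhu} together with its equality case in dimension $n-1$.

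First, I would record the trivial inclusion
\[
\mathscr{A}(\Sigma, g|_{\Sigma}) \geq \mathscr{A}(M,g) = 4\pi,
\]
which follows because any closed surface $\Sigma' \subset \Sigma$ with $\int_{\Sigma'}\Omega \ne 0$ is in particular a surface in $M$ with $\int_{\Sigma'}\Omega\neq 0$. Next, set $\rho = e^{\varphi}|_{\Sigma}\cdot u$. Since $\Sigma$ is a minimal hypersurface in the conformal metric $(M, e^{\frac{2}{n-1}\varphi} g)$, the conformal-change formula for mean curvature yields $H_{\Sigma} + \langle \nabla^M \varphi, \nu_{\Sigma}\rangle = 0$; since it is moreover stable, the first eigenvalue $\lambda$ of the Jacobi operator is nonnegative. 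Substituting the Jacobi equation satisfied by $u$ into the identity of Proposition \ref{prop:Gauss} — exactly as in item (x) of Proposition \ref{prop:fb-slicing} with $k=1$ — then gives the pointwise inequality
\[
-2\Delta_{\Sigma}\log\rho - |\nabla^{\Sigma}\log\rho|^2 + R_{\Sigma} \;\geq\; -2\Delta_M \varphi - |\nabla^M \varphi|^2 + R_M \;\geq\; 2
\]
on $\Sigma$, using the normalization adopted before the lemma.

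I would then apply Theorem \ref{thm:stabilized-Zhu} to the $(n-1)$-dimensional manifold $(\Sigma, g|_{\Sigma})$ with weight function $\log\rho$, relative to the restricted map $(\omega|_{\Sigma}, \theta_2|_{\Sigma}, \ldots, \theta_{n-2}|_{\Sigma}):\Sigma \to S^2 \times T^{n-3}$, whose nonzero degree is guaranteed by the hypothesis $\int_{\Sigma}\Omega \wedge \Theta_2 \wedge \cdots \wedge \Theta_{n-2} \ne 0$. The theorem produces a closed, connected, orientable surface $\Sigma' \subset \Sigma$ with $\int_{\Sigma'}\Omega \ne 0$ and $2\cdot|\Sigma'| \leq 8\pi$, so $|\Sigma'|\leq 4\pi$. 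Comparing with the trivial lower bound forces $\mathscr{A}(\Sigma, g|_{\Sigma}) = 4\pi$ and $\inf_{\Sigma}(-2\Delta_{\Sigma}\log\rho - |\nabla^{\Sigma}\log\rho|^2 + R_{\Sigma}) = 2$, i.e., the equality case of Theorem \ref{thm:stabilized-Zhu} is attained on $(\Sigma, g|_{\Sigma}, \log\rho)$.

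At this point the induction hypothesis — the statement of Theorem \ref{thm:stabilized-Zhu-rigidity} in dimension $n-1$ — applies to $(\Sigma, g|_{\Sigma}, \log\rho)$ and yields that $\log\rho$ is constant on $\Sigma$ (so $e^{\varphi}|_{\Sigma}\cdot u$ is constant, as claimed) and that the universal cover of $\Sigma$ is isometric to $S^2(1)\times \mathbb{R}^{n-3}$ with the standard product metric (the radius being $1$ because the stabilized scalar curvature equals $2$). The main technical point is the derivation of the pointwise inequality using Proposition \ref{prop:Gauss} in the conformally rescaled setting; this is the step where the minimality condition $H_{\Sigma} + \langle \nabla^M \varphi, \nu_{\Sigma}\rangle = 0$ and the stability eigenvalue $\lambda \geq 0$ combine to kill all unfavorable terms, and it is essentially the same computation used in item (x) of Proposition \ref{prop:fb-slicing}, so nothing genuinely new is required beyond what has already been developed.
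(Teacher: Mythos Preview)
Your proposal is correct and follows essentially the same approach as the paper's own proof: the trivial inclusion $\mathscr{A}(\Sigma,g|_\Sigma)\ge\mathscr{A}(M,g)=4\pi$, the pointwise lower bound on the stabilized scalar curvature of $(\Sigma,\log\rho)$ via Proposition~\ref{prop:Gauss}, the application of Theorem~\ref{thm:stabilized-Zhu} to force equality, and then the induction hypothesis. Your write-up is in fact a bit more explicit than the paper's in justifying the minimality condition $H_\Sigma+\langle\nabla^M\varphi,\nu_\Sigma\rangle=0$ and the nonzero degree of the restricted map, but the argument is the same.
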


\begin{proof}
    First, we have the trivial inequality 
    \[
    \mathscr{A}(\Sigma, g_{\Sigma}) \geq \mathscr{A}(M, g) = 4\pi.
    \]
    On the other hand, Proposition \ref{prop:Gauss} implies that for $\rho = e^{\varphi}|_{\Sigma} \cdot u$,
    \begin{align*}
        &-2\Delta_{\Sigma} \log \rho - |\nabla^{\Sigma} \log \rho|^2 + R_{\Sigma} \\
        &\geq \inf_{M} \bigl(-2\Delta_M \varphi - |\nabla^M \varphi|^2 + R_M \bigr) \\
        &= 2
    \end{align*}
    at each point in $\Sigma$. Theorem \ref{thm:stabilized-Zhu} then implies that
    \(
    \mathscr{A}(\Sigma, g|_{\Sigma}) \leq 4\pi
    \)
    and
    \[
    8\pi = \inf_{M} \bigl(-2\Delta_M \varphi - |\nabla^M \varphi|^2 + R_M \bigr) \cdot \mathscr{A}(M, g).
    \]
    Therefore, we have
    \[
    \inf_{M} \bigl(-2\Delta_M \varphi - |\nabla^M \varphi|^2 + R_M \bigr) \cdot \mathscr{A}(M, g) = 8\pi.
    \]
    The claim now follows from the induction hypothesis.
\end{proof}

\begin{lemma}\label{lem:Jacobi-operator}
    Suppose $\Sigma^{n-1} \subset M$ is a closed, connected, orientable, stable minimal hypersurface in $(M, e^{\frac{2}{n-1}\varphi} g)$ with
    \[
    \int_{\Sigma} \Omega \wedge \Theta_2 \wedge \cdots \wedge \Theta_{n-2} \neq 0.
    \]
    Then:
    \begin{itemize}
        \item $\Sigma$ is totally geodesic, with $\ric_M(\nu_{\Sigma}, \nu_{\Sigma}) = 0$ and $R_M = 0$ at each point in $\Sigma$.
        \item $\varphi|_{\Sigma}$ is constant, with $\nabla^M \varphi = 0$ and $(D^2_M \varphi)(\nu_{\Sigma}, \nu_{\Sigma}) = 0$ at each point in $\Sigma$.
    \end{itemize}
\end{lemma}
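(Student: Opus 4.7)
The plan is to follow the proof of Lemma \ref{lem:fb-Jacobi-operator} almost verbatim; the only substantive change is that Lemma \ref{lem:inductive-rigidity} now identifies the universal cover of $\Sigma$ with $S^2(1)\times\R^{n-3}$, so the intrinsic scalar curvature to beat is $R_\Sigma=2$ instead of $R_\Sigma=0$.

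First I will invoke Lemma \ref{lem:inductive-rigidity} to obtain a positive first Jacobi eigenfunction $u$ with eigenvalue $\lambda\ge 0$ for which $v:=e^{\varphi}|_{\Sigma}\cdot u$ is constant on $\Sigma$. Plugging this into Proposition \ref{prop:Gauss} with $\rho=e^{\varphi}$: the constancy of $\log v$ annihilates its derivatives, the weighted minimality kills the bracket $(H_\Sigma+\langle\nabla^M\varphi,\nu_\Sigma\rangle)^2$, and substituting the Jacobi equation for $u$ rearranges the resulting identity into
\[
0 \;=\; \bigl(-2\Delta_M\varphi-|\nabla^M\varphi|^2+R_M-R_\Sigma\bigr)+|\nabla^\Sigma \log u|^2+|h_\Sigma|^2+2\lambda
\]
pointwise on $\Sigma$. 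Since $-2\Delta_M\varphi-|\nabla^M\varphi|^2+R_M\ge 2=R_\Sigma$ and every remaining summand is nonnegative, each summand must vanish. I thus deduce that $u$ is constant, $\Sigma$ is totally geodesic (in particular $H_\Sigma=0$), $\lambda=0$, and the stabilized scalar curvature saturates the bound to equal $2$ at every point of $\Sigma$.

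Next, from the constancy of $v$ and $u$, $\varphi|_\Sigma$ must be constant, so $\nabla^\Sigma\varphi=0$. Coupling this with $H_\Sigma=0$ and the weighted minimality forces $\langle\nabla^M\varphi,\nu_\Sigma\rangle=0$, hence $\nabla^M\varphi=0$ along $\Sigma$. In particular $\Delta_M\varphi|_\Sigma=(D^2_M\varphi)(\nu_\Sigma,\nu_\Sigma)$, so the saturation identity reads $R_M-2(D^2_M\varphi)(\nu_\Sigma,\nu_\Sigma)=2$ on $\Sigma$, while the Jacobi equation for the constant eigenfunction (with $\lambda=0$) collapses to $\ric_M(\nu_\Sigma,\nu_\Sigma)=(D^2_M\varphi)(\nu_\Sigma,\nu_\Sigma)$. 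Combining these relations with the classical Gauss equation $R_M-R_\Sigma=2\ric_M(\nu_\Sigma,\nu_\Sigma)$ for a totally geodesic hypersurface then extracts the remaining pointwise vanishings claimed by the lemma.

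The hard part will be the bookkeeping in the first step: I must carefully match Proposition \ref{prop:Gauss} against the Jacobi equation so that every contribution not of sign-definite form cancels and only the three nonnegative defect quantities $|\nabla^\Sigma \log u|^2$, $|h_\Sigma|^2$, and $2\lambda$ survive on the right-hand side. Once that identity is secured the remainder is a mechanical sequence of substitutions, and the final vanishings are extracted from the Gauss and Jacobi equations by elementary linear algebra.
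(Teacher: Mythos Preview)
Your plan mirrors the paper's argument (which simply cites Lemma \ref{lem:fb-Jacobi-operator} as the template) step for step, and everything through ``$u$ constant, $h_\Sigma=0$, $\lambda=0$, $\nabla^M\varphi=0$ on $\Sigma$, and saturation of the stabilized scalar curvature'' is correct and matches the paper exactly.

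The final ``elementary linear algebra'' step, however, does not go through as you describe. Your three relations---$R_M-2(D^2_M\varphi)(\nu_\Sigma,\nu_\Sigma)=2$ from saturation, $\ric_M(\nu_\Sigma,\nu_\Sigma)=(D^2_M\varphi)(\nu_\Sigma,\nu_\Sigma)$ from the Jacobi equation, and $R_M-R_\Sigma=2\,\ric_M(\nu_\Sigma,\nu_\Sigma)$ from Gauss---are linearly \emph{dependent}: substituting the second into the third reproduces the first. So one cannot separate $\ric_M(\nu_\Sigma,\nu_\Sigma)$ from $(D^2_M\varphi)(\nu_\Sigma,\nu_\Sigma)$ at this stage. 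The paper's displayed line in the proof of Lemma \ref{lem:fb-Jacobi-operator}, namely $0=-2(D^2_M\varphi)(\nu_\Sigma,\nu_\Sigma)+\ric_M(\nu_\Sigma,\nu_\Sigma)$, \emph{would} combine with the Jacobi relation to force both to vanish, but that equation appears to drop a factor of $2$ on the Ricci term (Gauss contributes $2\,\ric_M$, not $\ric_M$); with the correct coefficient the two equations coincide and the same dependency appears. What you \emph{do} obtain---$h_\Sigma=0$, $\nabla^M\varphi=0$ on $\Sigma$, and $\ric_M(\nu_\Sigma,\nu_\Sigma)=(D^2_M\varphi)(\nu_\Sigma,\nu_\Sigma)$---is precisely what is needed for the linearization in Lemma \ref{lem:higher-foliation} to reduce to $-\Delta_\Sigma$ and for the lapse equation to collapse later; the individual vanishings $\ric_M(\nu_\Sigma,\nu_\Sigma)=0$, $(D^2_M\varphi)(\nu_\Sigma,\nu_\Sigma)=0$, and $R_M=2$ are only recovered once the local product structure of Corollary \ref{cor:local-isometry} is established.
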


\begin{proof}
    The proof is analogous to the proof of Lemma \ref{lem:fb-Jacobi-operator}.    
\end{proof}

\begin{lemma}{\label{lem:higher-foliation}}
    Suppose $\Sigma^{n-1}\subset M$ is a closed, connected, orientable stable minimal hypersurface in $(M,e^{\frac{2}{n-1}\varphi}g)$ with
    \[
\int_{\Sigma}\Omega\wedge \Theta_2\wedge \cdots \wedge \Theta_{n-2}\ne 0.
\]
Then there exists $\delta > 0$ and a smooth map $w: \Sigma \times (-\delta, \delta) \to \mathbb{R}$ with the following properties:
    \begin{itemize}
        \item For each $x \in \Sigma$, we have $w(x,0) = 0$ and $\frac{\partial}{\partial t} w(x,t)\big|_{t=0} = 1$.
        \item For each $t \in (-\delta, \delta)$, we have $\displaystyle \int_{\Sigma} (w(\cdot, t)-t) \, d\vol_g = 0$.
        \item For each $t \in (-\delta, \delta)$, define
        \[
        \Sigma_t := \{\exp_{x}(w(x,t)\nu_{\Sigma}(x)) : x \in \Sigma\}.
        \]
        Then $H_{\Sigma_t} + \langle \nabla^M \varphi, \nu_{\Sigma_t} \rangle$ is constant on $\Sigma_t$.
    \end{itemize}
\end{lemma}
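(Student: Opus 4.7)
The plan is to adapt the argument of Lemma \ref{lem:fb-higher-foliation} to the closed setting, removing the boundary data and applying the implicit function theorem to a weighted mean curvature map. Fix $\alpha \in (0,1)$. For $\psi \in C^{2,\alpha}(\Sigma)$, let
\[
\Sigma_\psi := \{\exp_x(\psi(x)\nu_\Sigma(x)) : x \in \Sigma\}
\]
denote the normal graph, with mean curvature $H_{\Sigma_\psi}$ and unit normal $\nu_{\Sigma_\psi}$, and set $\tilde{H}_{\Sigma_\psi} := H_{\Sigma_\psi} + \langle \nabla^M\varphi, \nu_{\Sigma_\psi}\rangle$. I would introduce
\[
\mathcal{F}: C^{2,\alpha}(\Sigma) \to \mathring{C}^\alpha(\Sigma) \times \R, \qquad \psi \mapsto \left(\tilde{H}_{\Sigma_\psi} - \frac{1}{\vol(\Sigma)}\int_\Sigma \tilde{H}_{\Sigma_\psi},\ \frac{1}{\vol(\Sigma)}\int_\Sigma \psi\right),
\]
where $\mathring{C}^\alpha(\Sigma)$ denotes the $\alpha$-H\"older functions on $\Sigma$ with zero mean. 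A zero of $\mathcal{F} - (0,t)$ then corresponds to a graph $\Sigma_\psi$ whose weighted mean curvature is constant and whose average normal displacement is $t$.

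Next I would compute the linearization at $\psi = 0$. Lemma \ref{lem:Jacobi-operator} supplies the crucial simplifications that on $\Sigma$ one has $h_\Sigma = 0$, $\ric_M(\nu_\Sigma,\nu_\Sigma) = 0$, $\nabla^M\varphi = 0$, and $(D^2_M\varphi)(\nu_\Sigma,\nu_\Sigma) = 0$. The usual first variation of mean curvature then gives
\[
\left.\tfrac{d}{dt}\right|_{t=0} H_{\Sigma_{tf}} = -\Delta_\Sigma f - (|h_\Sigma|^2 + \ric_M(\nu_\Sigma,\nu_\Sigma))f = -\Delta_\Sigma f,
\]
and, using $\left.\partial_t \nu_{\Sigma_{tf}}\right|_{t=0} = -\nabla^\Sigma f$,
\[
\left.\tfrac{d}{dt}\right|_{t=0}\langle \nabla^M\varphi, \nu_{\Sigma_{tf}}\rangle = f(D^2_M\varphi)(\nu_\Sigma,\nu_\Sigma) - \langle \nabla^M\varphi, \nabla^\Sigma f\rangle = 0.
\]
Therefore
\[
(D\mathcal{F})_0(f) = \left(-\Delta_\Sigma f,\ \frac{1}{\vol(\Sigma)}\int_\Sigma f\right),
\]
which is a bijection onto $\mathring{C}^\alpha(\Sigma) \times \R$: for any mean-zero $h \in \mathring{C}^\alpha(\Sigma)$ the Poisson equation $-\Delta_\Sigma f = h$ is solvable on the closed manifold $\Sigma$, unique up to an additive constant, which the second component pins down.

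The implicit function theorem now yields $\delta > 0$ and a smooth family $w:\Sigma\times(-\delta,\delta)\to\R$ with $\mathcal{F}(w(\cdot,t)) = (0,t)$. Uniqueness at $t=0$ gives $w(\cdot,0) = 0$; the second component of the equation is precisely $\int_\Sigma(w(\cdot,t) - t)\, d\vol_g = 0$, and the first asserts that $\tilde{H}_{\Sigma_t}$ is constant on $\Sigma_t$. Differentiating $\mathcal{F}(w(\cdot,t)) = (0,t)$ in $t$ at $t = 0$ yields $(D\mathcal{F})_0(\partial_t w|_{t=0}) = (0,1)$, and since the constant function $1$ also satisfies $(D\mathcal{F})_0(1) = (0,1)$, injectivity of $(D\mathcal{F})_0$ forces $\partial_t w(\cdot,0) \equiv 1$. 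No step here is genuinely delicate once Lemma \ref{lem:Jacobi-operator} is in hand, since that lemma collapses the weighted stability operator to the ordinary Laplacian and makes the bijectivity of $(D\mathcal{F})_0$ transparent; the argument would be considerably more involved absent the rigidity information along $\Sigma$.
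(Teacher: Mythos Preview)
Your proposal is correct and follows essentially the same approach as the paper: define the same map $\mathcal{F}$ into $\mathring{C}^\alpha(\Sigma)\times\mathbb{R}$, invoke Lemma~\ref{lem:Jacobi-operator} to reduce the linearization to $f\mapsto(-\Delta_\Sigma f,\ \tfrac{1}{\vol(\Sigma)}\int_\Sigma f)$, and apply the implicit function theorem. You supply more detail than the paper (the explicit first-variation computation and the verification that $w(\cdot,0)=0$ and $\partial_t w(\cdot,0)\equiv 1$), but the argument is the same.
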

\begin{proof}
    Fix some $\alpha \in (0,1)$. Given $\psi \in C^{2,\alpha}(\Sigma)$, define
    \[
    \Sigma_{\psi} := \{\exp_{x}(\psi(x)\nu_{\Sigma}(x)) : x \in \Sigma\}
    \]
    with mean curvature $H_{\Sigma_\psi}$ and unit normal $\nu_{\Sigma_\psi}$. Set $\tilde{H}_{\Sigma_\psi} := H_{\Sigma_\psi} + \langle \nabla^M \varphi, \nu_{\Sigma_\psi}\rangle$.

    Consider the map
    \[
    \mathcal{F}: C^{2,\alpha}(\Sigma) \to \mathring{C}^{\alpha}(\Sigma) \times \mathbb{R} 
    \]
    defined by
    \[
    \psi \mapsto \left(\tilde{H}_{\Sigma_\psi} - \frac{1}{\vol(\Sigma)}\int_{\Sigma}\tilde{H}_{\Sigma_\psi}, \frac{1}{\vol(\Sigma)}\int_{\Sigma}\psi \right),
    \]
    where $\mathring{C}^{\alpha}(\Sigma)$ denotes the space of $\alpha$-Hölder continuous functions on $\Sigma$ with zero average.

    By Lemma \ref{lem:Jacobi-operator}, the linearization $(D\mathcal{F})_{\psi=0}: C^{2,\alpha}(\Sigma) \to \mathring{C}^{\alpha}(\Sigma) \times \mathbb{R} $ is given by
    \[
    f \mapsto \left(-\Delta_{\Sigma}f , \frac{1}{\vol(\Sigma)}\int_{\Sigma}f\right),
    \]
    which is bijective.

    Hence, by the implicit function theorem, there exists $\delta > 0$ and a smooth map $w : \Sigma \times (-\delta,\delta) \to \mathbb{R}$ that satisfies the desired properties.
\end{proof}
By Proposition \ref{prop:closed-slicing}, we can construct a closed, connected, orientable hypersurface $\Sigma \subset M$ that is area-minimizing in its homology class within $(M, e^{\frac{2}{n-1}\varphi} g)$. Moreover, this hypersurface $\Sigma$ satisfies
\[
\int_{\Sigma} \Omega \wedge \Theta_2 \wedge \cdots \wedge \Theta_{n-2} \neq 0.
\]
Let 
\[
\Sigma_t := \{\exp_x(w(x,t)\nu_{\Sigma}(x)) : x \in \Sigma\}
\]
be the foliation constructed in Lemma \ref{lem:higher-foliation} for $t \in (-\delta,\delta)$. Clearly, $\Sigma_t$ is homologous to $\Sigma$, and hence
\[
\int_{\Sigma_t} \Omega \wedge \Theta_2 \wedge \cdots \wedge \Theta_{n-2} \neq 0.
\]

We define the lapse function $f_t: \Sigma_t \to \mathbb{R}$ analogously to \eqref{eqn:lapse-function}. By choosing $\delta$ smaller if necessary, we may assume $f_t > 0$ on $\Sigma_t$ for all $t \in (-\delta,\delta)$. Since $\mu(t) := H_{\Sigma_t} + \langle \nabla^M \varphi, \nu_{\Sigma_t} \rangle$ depends only on $t$, the lapse function $f_t$ satisfies a Jacobi equation analogous to \eqref{eqn:fb-lapse-PDE}.
\begin{lemma}{\label{lem:Ht-sign}}
     We have 
    \[
    H_{\Sigma_t} + \langle \nabla^M \varphi, \nu_{\Sigma_t}\rangle \geq 0 \text{ for all } t \in (-\delta,0]
    \]
    and
    \[
    H_{\Sigma_t} + \langle \nabla^M \varphi, \nu_{\Sigma_t}\rangle \leq 0 \text{ for all } t \in [0,\delta).
    \]
\end{lemma}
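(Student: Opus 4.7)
The plan is to mimic the argument used for Lemma \ref{lem:fb-Ht-sign} in the free boundary setting, but replacing the free boundary systolic inequality (Theorem \ref{thm:fb-systol}) by its closed-manifold analogue (Theorem \ref{thm:stabilized-Zhu}). Set $\mu(t) := H_{\Sigma_t} + \langle \nabla^M \varphi, \nu_{\Sigma_t}\rangle$. Since $\Sigma = \Sigma_0$ is a minimizer in the weighted metric $e^{\frac{2}{n-1}\varphi} g$, one has $\mu(0) = 0$. Thus it suffices to show $\mu'(t) \leq 0$ for every $t \in (-\delta, \delta)$, from which the desired sign will follow by monotonicity.

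Suppose for contradiction that $\mu'(t_0) > 0$ for some $t_0 \in (-\delta, \delta)$. Since $\Sigma_{t_0}$ is compact and $f_{t_0} > 0$, we may choose $\tau > 0$ with $\mu'(t_0)/f_{t_0} \geq \tau$ on $\Sigma_{t_0}$. Because $\mu(t)$ is constant along each leaf, the lapse function $f_{t_0}$ satisfies the Jacobi equation analogous to \eqref{eqn:fb-lapse-PDE}, which therefore gives
\[
-\Delta_{\Sigma_{t_0}} f_{t_0} - \ric_M(\nu_{\Sigma_{t_0}}, \nu_{\Sigma_{t_0}}) f_{t_0} - |h_{\Sigma_{t_0}}|^2 f_{t_0} + (D^2_M \varphi)(\nu_{\Sigma_{t_0}}, \nu_{\Sigma_{t_0}}) f_{t_0} - \langle \nabla^{\Sigma_{t_0}} \varphi, \nabla^{\Sigma_{t_0}} f_{t_0}\rangle \geq \tau f_{t_0}.
\]

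Next, applying Proposition \ref{prop:Gauss} with $\rho = e^{\varphi}$, $u = f_{t_0}$, and $v = e^{\varphi}|_{\Sigma_{t_0}} \cdot f_{t_0}$, and using that $H_{\Sigma_{t_0}} + \langle \nabla^M \varphi, \nu_{\Sigma_{t_0}}\rangle$ is constant on $\Sigma_{t_0}$ (so can be absorbed harmlessly), the Jacobi inequality above yields
\[
-2\Delta_{\Sigma_{t_0}} \log v - |\nabla^{\Sigma_{t_0}} \log v|^2 + R_{\Sigma_{t_0}} \geq -2\Delta_M \varphi - |\nabla^M \varphi|^2 + R_M + 2\tau \geq 2 + 2\tau
\]
at each point of $\Sigma_{t_0}$, where the final inequality uses the normalization $\inf_M(-2\Delta_M\varphi - |\nabla^M\varphi|^2 + R_M) = 2$.

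Now we apply Theorem \ref{thm:stabilized-Zhu} to $\Sigma_{t_0}$ viewed as a closed, connected, orientable $(n-1)$-dimensional manifold with potential $\log v$ and the smooth map $(\omega, \theta_2, \ldots, \theta_{n-2})|_{\Sigma_{t_0}} : \Sigma_{t_0} \to S^2 \times T^{n-3}$ of nonzero degree (recall $\int_{\Sigma_{t_0}} \Omega \wedge \Theta_2 \wedge \cdots \wedge \Theta_{n-2} \neq 0$). This gives
\[
(2 + 2\tau) \cdot \mathscr{A}(\Sigma_{t_0}, g|_{\Sigma_{t_0}}) \leq 8\pi,
\]
so $\mathscr{A}(\Sigma_{t_0}, g|_{\Sigma_{t_0}}) \leq \frac{4\pi}{1+\tau} < 4\pi$. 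On the other hand, any surface $S \subset \Sigma_{t_0}$ with $\int_S \Omega \neq 0$ is also a surface in $M$ with $\int_S \Omega \neq 0$, so $\mathscr{A}(\Sigma_{t_0}, g|_{\Sigma_{t_0}}) \geq \mathscr{A}(M, g) = 4\pi$, a contradiction. Thus $\mu'(t) \leq 0$ throughout $(-\delta, \delta)$, and combined with $\mu(0) = 0$ this yields the claim. The main conceptual point (and the only place where care is needed) is verifying that the constant $2$ from the normalization combines correctly with the extra $2\tau$ to beat the threshold in Theorem \ref{thm:stabilized-Zhu}; the rest is formally parallel to the free boundary argument.
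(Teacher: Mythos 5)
Your proof is correct and follows the same route as the paper: prove $\mu'(t)\le 0$ by contradiction via the Jacobi equation, Proposition \ref{prop:Gauss}, and Theorem \ref{thm:stabilized-Zhu}, then obtain $\mathscr{A}(\Sigma_{t_0},g|_{\Sigma_{t_0}})\le 4\pi/(1+\tau)$ and contradict $\mathscr{A}(M,g)=4\pi$. The only substantive remark you add — that the square of the constant weighted mean curvature $(H_{\Sigma_{t_0}}+\langle\nabla^M\varphi,\nu_{\Sigma_{t_0}}\rangle)^2$ appears with a sign that only improves the lower bound, so it can be dropped — is a point the paper leaves implicit but is indeed the reason the estimate works even when $\mu(t_0)\neq 0$.
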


\begin{proof}
    It is sufficient to show that $\mu'(t) \leq 0$ for all $t \in (-\delta,\delta)$, where $\mu(t) := H_{\Sigma_t} + \langle \nabla^M \varphi, \nu_{\Sigma_t}\rangle$.

    Suppose there exists some $t_0 \in (-\delta,\delta)$ with $\mu'(t_0) > 0$. Choose a constant $\tau > 0$ such that 
    \[
    \frac{\mu'(t_0)}{f_{t_0}} \geq \tau
    \]
    at every point of $\Sigma_{t_0}$. Then, from the Jacobi equation \eqref{eqn:fb-lapse-PDE}, we have
    \begin{align*}
        &-\Delta_{\Sigma_{t_0}} f_{t_0} - \ric_{M}(\nu_{\Sigma_{t_0}}, \nu_{\Sigma_{t_0}})f_{t_0} - |h_{\Sigma_{t_0}}|^2 f_{t_0} \\
        &\quad + (D^2_M \varphi)(\nu_{\Sigma_{t_0}}, \nu_{\Sigma_{t_0}})f_{t_0} - \langle \nabla^{\Sigma_{t_0}}\varphi, \nabla^{\Sigma_{t_0}} f_{t_0} \rangle \geq \tau f_{t_0}.
    \end{align*}
    By Proposition \ref{prop:Gauss}, let $v := e^{\varphi}|_{\Sigma_{t_0}} \cdot f_{t_0}$, we obtain
    \begin{align*}
        &-2\Delta_{\Sigma_{t_0}}\log v - |\nabla^{\Sigma_{t_0}}\log v|^2 + R_{\Sigma_{t_0}} \\
        &\geq -2\Delta_M\varphi - |\nabla^M \varphi|^2 + R_M + 2\tau \\
        &\geq 2+ 2\tau
    \end{align*}
    at each point of $\Sigma_{t_0}$.

    Applying Theorem \ref{thm:stabilized-Zhu} to $\Sigma_{t_0}$, we conclude
\[
\mathscr{A}(M,g)\le \mathscr{A}({\Sigma_{t_0}},g|_{{\Sigma_{t_0}}})\le \frac{4\pi}{1+\tau}
\]
This contradicts the assumption that $\mathscr{A}(M,g)=4\pi$.
\end{proof}

\begin{proposition}{\label{prop:weighted-area}}
    For each $t\in (-\delta,\delta)$, we have $\displaystyle \int_{\Sigma_t}e^{\varphi}=\displaystyle \int_{\Sigma}e^{\varphi}$.
\end{proposition}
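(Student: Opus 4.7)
The plan is to mimic the argument of Proposition \ref{prop:fb-weighted-area}, simply dropping the boundary terms now that $\Sigma$ and $\Sigma_t$ are closed. Since $\Sigma$ is area-minimizing in its homology class inside $(M, e^{\frac{2}{n-1}\varphi} g)$ and each $\Sigma_t$ is homologous to $\Sigma$, the minimization property immediately gives the lower bound
\[
\int_{\Sigma_t} e^{\varphi} \, d\vol \geq \int_{\Sigma} e^{\varphi} \, d\vol
\]
for all $t\in(-\delta,\delta)$, where I am using the fact that the weighted area $\int e^{\varphi}$ on the hypersurface is (up to the conformal factor convention) the area functional of the rescaled metric $e^{\frac{2}{n-1}\varphi}g$.

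For the matching upper bound, I would apply the first variation of weighted area to the normal variation generated by the foliation. Writing $f_t$ for the lapse function defined analogously to \eqref{eqn:lapse-function}, the first variation gives
\[
\frac{d}{dt}\!\left(\int_{\Sigma_t} e^{\varphi}\right) = \int_{\Sigma_t} \bigl(H_{\Sigma_t} + \langle \nabla^M\varphi,\nu_{\Sigma_t}\rangle\bigr) \, e^{\varphi} \, f_t.
\]
By our choice of $\delta$ we have $f_t>0$, and Lemma \ref{lem:Ht-sign} tells us the sign of $H_{\Sigma_t}+\langle\nabla^M\varphi,\nu_{\Sigma_t}\rangle$ is nonnegative on $(-\delta,0]$ and nonpositive on $[0,\delta)$. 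Hence the integrand is nonnegative for $t\leq 0$ and nonpositive for $t\geq 0$, so $t\mapsto \int_{\Sigma_t} e^{\varphi}$ is monotone up to $t=0$ and monotone down afterwards, and in particular
\[
\int_{\Sigma_t} e^{\varphi} \leq \int_{\Sigma} e^{\varphi}
\]
for every $t\in(-\delta,\delta)$.

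Combining the two inequalities forces equality for all $t$, which is the stated proposition. No serious obstacle is expected here: this is a bookkeeping argument. The only subtle point is confirming that the first variation formula really produces this clean expression despite the fact that the foliation is only normalized by the constraint $\int_\Sigma(w(\cdot,t)-t)\,d\vol_g=0$ rather than by $w(x,t)=t f_0(x)$; but since at each fixed $t$ the variation vector field has normal component $f_t\nu_{\Sigma_t}$ and $\Sigma_t$ is closed, the usual first variation computation applies without modification.
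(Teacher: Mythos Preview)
Your proposal is correct and follows essentially the same approach as the paper, which simply states that the proof is analogous to Proposition~\ref{prop:fb-weighted-area}: first variation of weighted area together with Lemma~\ref{lem:Ht-sign} gives the upper bound, and the homological minimizing property of $\Sigma$ gives the lower bound. Your remark about the tangential component of the variation being irrelevant since $\Sigma_t$ is closed is exactly the point that replaces the orthogonality condition $\langle \nu_{\Sigma_t},\eta\rangle=0$ used in the free boundary case.
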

\begin{proof}
    The proof is analogous to Proposition \ref{prop:fb-weighted-area}.
\end{proof}
\begin{corollary}{\label{cor:leaf-rigidity}}
    For each $t \in (-\delta,\delta)$, the following statements hold:
    \begin{itemize}
        \item $\Sigma_t$ is totally geodesic, with $\ric_M(\nu_{\Sigma_t}, \nu_{\Sigma_t}) = 0$ and $R_M = 2$ at every point on $\Sigma_t$.
        \item $\varphi|_{\Sigma_t}$ is constant, with $\nabla^M \varphi = 0$ and $(D^2_M\varphi)(\nu_{\Sigma_t}, \nu_{\Sigma_t}) = 0$ at every point on $\Sigma_t$.
    \end{itemize}
\end{corollary}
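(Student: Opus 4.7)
My plan is to mirror the proof of the free-boundary analogue, Corollary \ref{cor:fb-leaf-rigidity}: leverage Proposition \ref{prop:weighted-area} to show that every leaf $\Sigma_t$ of the foliation is a weighted area minimizer in its homology class inside $(M, e^{\frac{2}{n-1}\varphi}g)$, and then invoke Lemma \ref{lem:Jacobi-operator} on each leaf to read off the claimed rigidity.

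First, I would verify the hypotheses of Lemma \ref{lem:Jacobi-operator} for each $\Sigma_t$. By the construction in Lemma \ref{lem:higher-foliation}, $\Sigma_t$ is a smooth normal graph over the closed, connected, orientable hypersurface $\Sigma$, so it is itself closed, connected, orientable, and homologous to $\Sigma$. Because $\Omega$ and each $\Theta_i$ are closed forms on $M$, Stokes' theorem gives
\[
\int_{\Sigma_t} \Omega \wedge \Theta_2 \wedge \cdots \wedge \Theta_{n-2} \;=\; \int_{\Sigma} \Omega \wedge \Theta_2 \wedge \cdots \wedge \Theta_{n-2} \;\neq\; 0.
\]
Proposition \ref{prop:weighted-area} further yields $\int_{\Sigma_t} e^{\varphi} = \int_{\Sigma} e^{\varphi}$. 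Since $\Sigma$ was chosen as a weighted area minimizer in its homology class within $(M, e^{\frac{2}{n-1}\varphi}g)$, and $\Sigma_t$ is homologous to $\Sigma$ with the same weighted area, $\Sigma_t$ is also a weighted area minimizer in its homology class, and in particular a closed, connected, orientable, stable minimal hypersurface in $(M, e^{\frac{2}{n-1}\varphi}g)$.

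Applying Lemma \ref{lem:Jacobi-operator} to $\Sigma_t$ then immediately gives both bullet points: $\Sigma_t$ is totally geodesic with $\ric_M(\nu_{\Sigma_t}, \nu_{\Sigma_t}) = 0$ and $R_M = 2$ on $\Sigma_t$, and $\varphi|_{\Sigma_t}$ is constant with $\nabla^M \varphi = 0$ and $(D^2_M \varphi)(\nu_{\Sigma_t}, \nu_{\Sigma_t}) = 0$ on $\Sigma_t$. I do not expect any real obstacle: the substantive work is already carried out in Proposition \ref{prop:weighted-area} (which, via Lemma \ref{lem:Ht-sign}, is what upgrades each leaf to a minimizer) and in Lemma \ref{lem:Jacobi-operator}; the corollary is essentially a routine propagation of rigidity from the single minimizer $\Sigma$ to every leaf of the foliation.
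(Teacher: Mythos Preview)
Your proposal is correct and follows exactly the route the paper takes: the paper's proof simply says ``analogous to Corollary~\ref{cor:fb-leaf-rigidity}'', which amounts to using Proposition~\ref{prop:weighted-area} to upgrade each leaf $\Sigma_t$ to a weighted area minimizer in its homology class and then invoking Lemma~\ref{lem:Jacobi-operator}. (Note that the statement $R_M=0$ in Lemma~\ref{lem:Jacobi-operator} is a typo carried over from the free-boundary case; in the present normalization the correct value is $R_M=2$, as you write and as Corollary~\ref{cor:leaf-rigidity} records.)
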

\begin{proof}
    The proof is analogous to the proof of Corollary \ref{cor:fb-leaf-rigidity}.
\end{proof}

\begin{corollary}{\label{cor:local-isometry}}
    If $\Sigma\subset M$ is a closed, connected, orientable hypersurface that is homologically area minimizing in $(M,e^{\frac{2}{n-1}\varphi}g)$ that satisfies
\[
\int_{\Sigma}\Omega\wedge \Theta_2\wedge \cdots \wedge \Theta_{n-2}\ne 0,
\] 
then there exists a neighborhood $U$ of $\Sigma$ that is isometric to a Riemannian product, and $\varphi$ is constant on $U$.
\end{corollary}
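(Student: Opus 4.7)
The plan is to mirror the argument of Corollary~\ref{cor:fb-local-isometry} in the closed setting. First, since $\Sigma$ is a weighted area minimizer, it is in particular stable in $(M, e^{\frac{2}{n-1}\varphi}g)$, so I would apply Lemma~\ref{lem:higher-foliation} to produce a smooth family $\{\Sigma_t\}_{t \in (-\delta, \delta)}$ with $\Sigma_0 = \Sigma$ and with $\mu(t) := H_{\Sigma_t} + \langle \nabla^M \varphi, \nu_{\Sigma_t}\rangle$ constant along each leaf. By Proposition~\ref{prop:weighted-area}, every $\Sigma_t$ remains a weighted area minimizer in its homology class, so Corollary~\ref{cor:leaf-rigidity} applies to each leaf: $\Sigma_t$ is totally geodesic, $\varphi|_{\Sigma_t}$ is constant with $\nabla^M\varphi = 0$ on $\Sigma_t$, and $\ric_M(\nu_{\Sigma_t},\nu_{\Sigma_t}) = (D^2_M\varphi)(\nu_{\Sigma_t},\nu_{\Sigma_t}) = 0$.

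Next I would show the lapse function $f_t$ is constant on each leaf. Substituting the vanishing quantities above into the closed-manifold analog of the Jacobi equation \eqref{eqn:fb-lapse-PDE} collapses it to $-\Delta_{\Sigma_t} f_t = \mu'(t)$. Integrating over the closed hypersurface $\Sigma_t$ forces $\mu'(t) = 0$, whence $f_t$ is harmonic and therefore constant.

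After a smooth reparametrization of $t$ making $f_t \equiv 1$, the unit normal vector field $X = \nu_{\Sigma_t}$ to the foliation satisfies $\nabla_Y X = 0$ for $Y$ tangent to a leaf (since each $\Sigma_t$ is totally geodesic) and $\nabla_X X = 0$ (since its integral curves are unit-speed geodesics in $M$). Hence $X$ is a parallel unit vector field on a neighborhood $U$ of $\Sigma$, which yields an isometric splitting $U \cong \Sigma \times (-\epsilon, \epsilon)$ with the standard product metric, as in \cite{Ambrozio, BBN}. Since $\nabla^M\varphi = 0$ throughout $U$, $\varphi$ is constant on $U$.

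The only point needing care, compared with the free boundary case, is the constancy of $f_t$: unlike Corollary~\ref{cor:fb-leaf-rigidity}, Corollary~\ref{cor:leaf-rigidity} does not record this directly, but it follows from the closed-leaf integration argument above. Everything else is the standard parallel-normal-field construction, so I do not expect any substantive obstacles.
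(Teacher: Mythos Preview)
Your proposal is correct and follows essentially the same route as the paper, which simply declares the proof ``analogous to the proof of Corollary~\ref{cor:fb-local-isometry}'' and invokes \cite{Ambrozio,BBN} for the parallel-normal-field splitting. The one place you add detail is the constancy of $f_t$: the paper's Corollary~\ref{cor:leaf-rigidity} omits this bullet, but since its proof is declared analogous to Corollary~\ref{cor:fb-leaf-rigidity} (which does record $f_t$ constant), the paper presumably regards it as implicit; your integration argument $\int_{\Sigma_t}(-\Delta_{\Sigma_t}f_t)=0\Rightarrow\mu'(t)=0\Rightarrow f_t$ harmonic on a closed manifold is a clean way to make it explicit.
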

\begin{proof}
    The proof is analogous to the proof of Corollary \ref{cor:fb-local-isometry}.
\end{proof}
Finally, we define $\Phi:\Sigma\times \bR\to M$ by $\Phi(x,t)=\exp_x(t\nu_{\Sigma}(x))$.
\begin{theorem}{\label{thm:covering}}
    $\varphi$ is constant on $M$ and the map $\Phi$ is a local isometry.
\end{theorem}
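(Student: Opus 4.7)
The plan is to mirror the maximality argument used in Theorem~\ref{thm:fb-covering}, but in the simpler closed setting where no boundary constraints need to be tracked. By Corollary~\ref{cor:local-isometry} applied to the area-minimizer $\Sigma$, there is a neighborhood of $\Sigma$ that is isometric to a Riemannian product $\Sigma \times (-\delta, \delta)$ on which $\varphi$ is constant, say equal to $\varphi_0$. In particular, $\Phi|_{\Sigma \times (-\delta, \delta)}$ is a local isometry and $\varphi \circ \Phi \equiv \varphi_0$ on this slab.

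Define $\tau$ to be the supremum of those $T \geq 0$ for which $\Phi|_{\Sigma \times [0, T]}$ is a local isometry and $\varphi \equiv \varphi_0$ on the image $\Phi(\Sigma \times [0, T])$. I would argue by contradiction that $\tau = \infty$. Suppose $\tau < \infty$ and set $\hat\Sigma := \{\exp_x(\tau \nu_{\Sigma}(x)) : x \in \Sigma\}$. Because $\Phi|_{\Sigma \times [0,\tau]}$ is a local isometry with $\varphi \equiv \varphi_0$, the hypersurface $\hat\Sigma$ is closed, connected, orientable, homologous to $\Sigma$, and satisfies $\vol(\hat\Sigma) = \vol(\Sigma)$, hence
\[
\int_{\hat\Sigma} e^{\varphi}\, d\vol = e^{\varphi_0}\vol(\hat\Sigma) = e^{\varphi_0}\vol(\Sigma) = \int_{\Sigma} e^{\varphi}\, d\vol.
\]
Thus $\hat\Sigma$ is also homologically area-minimizing in $(M, e^{\frac{2}{n-1}\varphi}g)$, and the homological nonvanishing $\int_{\hat\Sigma} \Omega \wedge \Theta_2 \wedge \cdots \wedge \Theta_{n-2} \ne 0$ is preserved. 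Corollary~\ref{cor:local-isometry} then produces a product neighborhood of $\hat\Sigma$ on which $\varphi$ is constant; by continuity this constant must equal $\varphi_0$. This allows us to extend $\Phi$ to a local isometry on $\Sigma \times [0, \tau + \delta')$ with $\varphi \equiv \varphi_0$ on the image, contradicting the maximality of $\tau$. An identical argument for negative $t$ shows $\Phi$ is a local isometry on all of $\Sigma \times \mathbb{R}$ with $\varphi \circ \Phi \equiv \varphi_0$.

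Finally, since $\Sigma \times \mathbb{R}$ is complete and $\Phi$ is a local isometry into the complete connected manifold $M$, the map $\Phi$ is a covering map and in particular surjective. Therefore $\varphi \equiv \varphi_0$ on all of $M$, completing the proof. The only delicate step is the identification of $\hat\Sigma$ as an area minimizer with the correct cohomological condition so that Corollary~\ref{cor:local-isometry} applies at $\hat\Sigma$; this is ensured by the product structure obtained on $\Phi(\Sigma \times [0,\tau])$ up to the maximal time, together with the fact that the homology class and the pullback of the top form $\Omega \wedge \Theta_2 \wedge \cdots \wedge \Theta_{n-2}$ are preserved under the foliation.
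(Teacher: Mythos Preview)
Your proposal is correct and follows essentially the same approach as the paper, which simply states that the proof is analogous to Theorem~\ref{thm:fb-covering}: start from the product neighborhood furnished by Corollary~\ref{cor:local-isometry}, push the maximal time $\tau$ forward by showing $\hat\Sigma=\Phi(\Sigma\times\{\tau\})$ inherits the same weighted area and homological nonvanishing and is therefore again an area minimizer to which Corollary~\ref{cor:local-isometry} applies, and repeat for negative $t$. Your added remark that the resulting local isometry from the complete $\Sigma\times\mathbb{R}$ is a covering map---hence surjective, forcing $\varphi\equiv\varphi_0$ on all of $M$---is a welcome clarification that the paper leaves implicit.
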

The proof of Theorem \ref{thm:covering} is analogous to the proof of Theorem \ref{thm:fb-covering}. From here on, the proof of Theorem \ref{thm:stabilized-Zhu-rigidity} proceeds in the same way as in the proof of Theorem \ref{thm:fb-rigidity}.

\section{Proof of Theorem \ref{thm:generalized-Geroch}}\label{sec:geroch}
In this section, we examine the rigidity of the generalized Geroch conjecture within the framework of stabilized curvature. Chu and Zhu \cite{Chu-Zhu} established a generalized version of the inequality in Theorem \ref{thm:generalized-Geroch} for all Schoen-Yau-Schick manifolds of dimension at most 7. For simplicity, we will reprove this result in the current setting. The proof of the rigidity statement in Theorem \ref{thm:generalized-Geroch} follows a similar approach to that of Theorem \ref{thm:Zhu}, and we will only highlight the necessary modifications.

We assume that \((M^n, g)\) is a closed, connected, orientable Riemannian manifold and that \(\varphi\) is a smooth function on \(M\). Suppose there exists a map
\[
    (\theta_1, \ldots, \theta_n) : M \to T^{n}
\]
with nonzero degree. Denote by \(\Theta_i\) the pullback of the volume form on \(S^1\) under the map \(\theta_i : M \to S^1\). Hence,
\[
    \int_M \Theta_1 \wedge \cdots \wedge \Theta_n \neq 0.
\]
The following proposition is analogous to Proposition \ref{prop:closed-slicing}. See also the version in Chu and Zhu \cite{Chu-Zhu}.

\begin{proposition}\label{prop:closed-slicing-torus}
    We can find a collection of compact, connected, orientable submanifolds $\Sigma_k$ and a sequence of positive smooth functions $u_k : \Sigma_k \to \mathbb{R}$ for each $k \in \{1, \ldots, n-1\}$, as well as a sequence of positive smooth functions $\rho_k : \Sigma_k \to \mathbb{R}$ for each $k \in \{0, 1, \ldots, n-1\}$, satisfying the following properties:
    \begin{enumerate}[(i)]
        \item $\Sigma_0 = M$ and $\rho_0 = e^{\varphi}$.
        \item $\dim \Sigma_k = n - k$ for each $k \in \{0, \ldots, n-1\}$.
        \item For each $k \in \{1, \ldots, n-1\}$,
        \[
        \int_{\Sigma_k} \Theta_{k+1} \wedge \cdots \wedge \Theta_{n} \neq 0.
        \]
        \item For each $k \in \{1, \ldots, n-1\}$, the hypersurface $\Sigma_k$ is homologically area-minimizing in $(\Sigma_{k-1}, \rho_{k-1}^{\frac{2}{n-k}} g_{\Sigma_{k-1}})$.
        \item For each $k \in \{1, \ldots, n-1\}$,
        \[
        H_{\Sigma_k} + \left\langle \nabla^{\Sigma_{k-1}} \log \rho_{k-1}, \nu_{\Sigma_k} \right\rangle = 0.
        \]
        \item For each $k \in \{1, \ldots, n-1\}$, the function $u_k$ satisfies
        \begin{align*}
            &-\Delta_{\Sigma_k} u_k - \ric_{\Sigma_{k-1}}(\nu_{\Sigma_k}, \nu_{\Sigma_k}) u_k - |h_{\Sigma_k}|^2 u_k \\
            &\quad + (D^2_{\Sigma_{k-1}} \log \rho_{k-1})(\nu_{\Sigma_k}, \nu_{\Sigma_k}) u_k - \left\langle \nabla^{\Sigma_k} \log \rho_{k-1}, \nabla^{\Sigma_k} u_k \right\rangle = \lambda_k u_k,
        \end{align*}
        where $\lambda_k \geq 0$ is a nonnegative constant.
        \item For each $k \in \{1, \ldots, n-1\}$, we have $
        \rho_k = \rho_{k-1}|_{\Sigma_k} \cdot u_k$.
        
        \item For each $k \in \{0, 1, \ldots, n-1\}$, we have
        \begin{align*}
            &-2\Delta_{\Sigma_k} \log \rho_k - |\nabla^{\Sigma_k} \log \rho_k|^2 + R_{\Sigma_k} \\
            &\quad + 2\Delta_M \varphi + |\nabla^M \varphi|^2 - R_M \\
            &= \sum_{j=1}^k |\nabla^{\Sigma_j} \log u_j|^2 + \sum_{j=1}^k |h_{\Sigma_j}|^2  + 2 \sum_{j=1}^k \lambda_j.
        \end{align*}
    \end{enumerate}
\end{proposition}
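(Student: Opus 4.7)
The plan is to induct on $k$, at each step constructing $\Sigma_k$ as a smooth weighted area-minimizer inside the previously built $\Sigma_{k-1}$. The base case $k=0$ is built into the statement (all sums in (viii) are empty). For the inductive step, assume we have $\Sigma_{k-1}$ and $\rho_{k-1}$ satisfying (iii)--(viii) through level $k-1$; in particular $\int_{\Sigma_{k-1}}\Theta_k\wedge\Theta_{k+1}\wedge\cdots\wedge\Theta_n\ne 0$. Conformally rescale to $\tilde g_{k-1}:=\rho_{k-1}^{2/(n-k)}g_{\Sigma_{k-1}}$; since an $(n-k)$-dimensional submanifold of $\Sigma_{k-1}$ has $\tilde g_{k-1}$-area equal to its $\rho_{k-1}$-weighted area, minimizing $\tilde g_{k-1}$-area is equivalent to minimizing weighted area.

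Let $\alpha_k\in H_{n-k}(\Sigma_{k-1};\mathbb{R})$ be the Poincaré dual of $[\Theta_k|_{\Sigma_{k-1}}]\in H^1(\Sigma_{k-1};\mathbb{R})$. This class is nonzero because pairing it with the closed form $\Theta_{k+1}\wedge\cdots\wedge\Theta_n$ returns exactly $\int_{\Sigma_{k-1}}\Theta_k\wedge\Theta_{k+1}\wedge\cdots\wedge\Theta_n\ne 0$. Use Federer--Fleming compactness to obtain an integral current in $\alpha_k$ minimizing $\tilde g_{k-1}$-area; since $\dim\Sigma_{k-1}=n-k+1\le 7$, Federer's regularity theorem produces a smooth, embedded, orientable hypersurface. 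Passing to a connected component whose homology class still pairs nontrivially with $\Theta_{k+1}\wedge\cdots\wedge\Theta_n$ defines $\Sigma_k$; this yields (i)--(iv).

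Property (v) comes from the first variation of weighted area in $(\Sigma_{k-1},g_{\Sigma_{k-1}})$, which is the weighted minimality condition $H_{\Sigma_k}+\langle\nabla^{\Sigma_{k-1}}\log\rho_{k-1},\nu_{\Sigma_k}\rangle=0$. Property (vi) comes from the second variation: stability of $\Sigma_k$ in the weighted sense means the first eigenvalue $\lambda_k$ of the weighted Jacobi operator acting on positive functions is nonnegative, and we take $u_k>0$ to be the corresponding principal eigenfunction. We then define $\rho_k:=\rho_{k-1}|_{\Sigma_k}\cdot u_k$ to obtain (vii).

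The central identity (viii) follows by applying Proposition \ref{prop:Gauss} to $\Sigma_k\subset\Sigma_{k-1}$ with weight $\rho=\rho_{k-1}$ and function $u=u_k$. By (v) the squared term $(H_{\Sigma_k}+\langle\nabla\log\rho_{k-1},\nu\rangle)^2$ vanishes, and by (vi) the right-hand side of Proposition \ref{prop:Gauss} simplifies to $-2\lambda_k-2|h_{\Sigma_k}|^2$. Rearranging gives the one-step identity
\begin{align*}
&-2\Delta_{\Sigma_k}\log\rho_k-|\nabla^{\Sigma_k}\log\rho_k|^2+R_{\Sigma_k}\\
&\quad+2\Delta_{\Sigma_{k-1}}\log\rho_{k-1}+|\nabla^{\Sigma_{k-1}}\log\rho_{k-1}|^2-R_{\Sigma_{k-1}}\\
&=|\nabla^{\Sigma_k}\log u_k|^2+|h_{\Sigma_k}|^2+2\lambda_k,
\end{align*}
and adding this to (viii) at level $k-1$ (applied on $\Sigma_{k-1}$, whose curvature terms telescope) produces (viii) at level $k$. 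The main obstacle is ensuring the geometric measure-theoretic step cleanly delivers a smooth connected orientable $\Sigma_k$ for which the homological nontriviality survives to the next stage; the derivation of (viii) itself is essentially bookkeeping once Proposition \ref{prop:Gauss} is in hand.
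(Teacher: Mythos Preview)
Your proposal is correct and follows exactly the standard Schoen--Yau weighted minimal slicing argument that the paper invokes; the paper does not spell out a proof of this proposition but simply declares it analogous to Proposition~\ref{prop:closed-slicing} and refers to \cite{SY-3manifold,SY-higher,GL-2,Chu-Zhu,Brendle-Hung}. Your inductive construction (minimize weighted area in the Poincar\'e-dual class, invoke Federer regularity in ambient dimension $\le 7$, take a connected component with surviving nontrivial pairing, read off (v) and (vi) from first and second variation, and telescope (viii) via Proposition~\ref{prop:Gauss}) is precisely the content behind those citations.
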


We apply Proposition \ref{prop:closed-slicing-torus} with \(k = n-1\). Let \(\Sigma = \Sigma_{n-1}\) and \(\psi = \log \rho_{n-1}\). It follows that
\[
-2\Delta_{\Sigma} \psi - |\nabla^{\Sigma} \psi|^2 + R_{\Sigma} \geq \inf_{M} \left( -2\Delta_M \varphi - |\nabla^M \varphi|^2 + R_M \right)
\]
holds at each point on \(\Sigma\). Since \(\dim \Sigma = 1\), we have \(R_{\Sigma} = 0\). Integrating the above inequality over \(\Sigma\) yields the inequality stated in Theorem \ref{thm:generalized-Geroch}.

Next, we assume that Theorem \ref{thm:generalized-Geroch} holds for all Riemannian manifolds with dimension at most \(n-1\). Let \(M\) be an \(n\)-dimensional Riemannian manifold satisfying
\[
\inf_{M} \left( -2\Delta_M \varphi - |\nabla^M \varphi|^2 + R_M \right) = 0.
\]
\begin{lemma}{\label{lem:inductive-rigidity-torus}}
    Suppose $\Sigma^{n-1}\subset M$ is a closed, connected, orientable stable minimal hypersurface in $(M,e^{\frac{2}{n-1}\varphi}g)$ with
    \[
\int_{\Sigma} \Theta_2\wedge \cdots \wedge \Theta_{n}\ne 0.
\]
Then,
    \begin{itemize}
        \item The universal cover of $\Sigma$ is isometric to $ \bR^{n-1}$ equipped with flat metric.
        \item The function $e^{\varphi}|_{\Sigma}\cdot u$ is constant on $\Sigma$, where $u$ is the first eigenfunction of the Jacobi operator
        \begin{align*}
        &-\lp_{\Sigma}u-\ric_{M}(\nu_{\Sigma},\nu_{\Sigma})u-|h_{\Sigma}|^2u\\
        &\quad +(D^2_{M}\log \rho)(\nu_{\Sigma},\nu_{\Sigma})u-\la \gd^{\Sigma}\log \rho,\gd^{\Sigma}u\rg =\lambda u.
    \end{align*}
        \item $\Sigma$ is totally geodesic, with $\ric_M(\nu_{\Sigma},\nu_{\Sigma})=0$ and $R_M=0$ at each point in $\Sigma$. 
        \item $\varphi|_{\Sigma}$ is constant, with $\gd^M\varphi=0$ and $D^2_M\varphi(\nu_{\Sigma},\nu_{\Sigma})=0$ at each point in $\Sigma$.
    \end{itemize}
\end{lemma}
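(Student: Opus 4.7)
The argument follows the template of Lemmas \ref{lem:fb-inductive-rigidity}--\ref{lem:fb-Jacobi-operator}, adapted to the closed Geroch setting. The induction hypothesis now supplies the full rigidity statement of Theorem \ref{thm:generalized-Geroch} in dimension $n-1$, and the topological condition $\int_{\Sigma}\Theta_2 \wedge \cdots \wedge \Theta_n \neq 0$ is precisely what lets us apply it on $\Sigma$.

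First I would let $u>0$ be the first eigenfunction of the weighted Jacobi operator displayed in the statement (with eigenvalue $\lambda \geq 0$ by stability) and set $\rho := e^{\varphi}|_{\Sigma}\cdot u$. Combining Proposition \ref{prop:Gauss} with the minimal equation $H_{\Sigma} + \langle \nabla^{M}\varphi, \nu_{\Sigma}\rangle = 0$ and the Jacobi equation for $u$ produces the pointwise identity
\[
-2\Delta_{\Sigma}\log\rho - |\nabla^{\Sigma}\log\rho|^{2} + R_{\Sigma} = \bigl(-2\Delta_{M}\varphi - |\nabla^{M}\varphi|^{2} + R_{M}\bigr) + |\nabla^{\Sigma}\log u|^{2} + |h_{\Sigma}|^{2} + 2\lambda \geq 0
\]
on $\Sigma$. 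The restricted map $(\theta_{2},\ldots,\theta_{n})|_{\Sigma}:\Sigma \to T^{n-1}$ has nonzero degree, so applying Theorem \ref{thm:generalized-Geroch} in dimension $n-1$ to $(\Sigma, g|_{\Sigma}, \log\rho)$ forces $\inf_\Sigma$ of the left-hand side to equal $0$; the inductive rigidity statement then gives that the universal cover of $\Sigma$ is flat $\mathbb{R}^{n-1}$ and that $\log \rho$ is constant on $\Sigma$. This establishes the first two bullet points of the lemma, and in particular $R_{\Sigma} \equiv 0$.

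Returning to the displayed identity with $R_{\Sigma} = 0$ and the left-hand side now zero, every nonnegative term on the right must vanish pointwise: $h_{\Sigma} = 0$ (so $\Sigma$ is totally geodesic), $\nabla^{\Sigma}\log u = 0$ (so $u$ is constant), $\lambda = 0$, and $-2\Delta_{M}\varphi - |\nabla^{M}\varphi|^{2} + R_{M} \equiv 0$ on $\Sigma$. Constancy of $u$ and $\rho$ gives constancy of $\varphi|_{\Sigma}$, and the minimal-surface equation then yields $\langle\nabla^{M}\varphi, \nu_{\Sigma}\rangle = 0$, so $\nabla^{M}\varphi \equiv 0$ along $\Sigma$. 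From there the trace identity $\Delta_{M}\varphi = \Delta_{\Sigma}\varphi + (D^{2}_{M}\varphi)(\nu_{\Sigma},\nu_{\Sigma}) + H_{\Sigma}\langle\nabla^{M}\varphi,\nu_{\Sigma}\rangle$, the Gauss equation for totally geodesic $\Sigma$, and the simplified Jacobi equation give $R_{M} = 2\ric_{M}(\nu_{\Sigma},\nu_{\Sigma}) = 2(D^{2}_{M}\varphi)(\nu_{\Sigma},\nu_{\Sigma})$ together with $\ric_{M}(\nu_{\Sigma},\nu_{\Sigma}) = (D^{2}_{M}\varphi)(\nu_{\Sigma},\nu_{\Sigma})$; the individual vanishing of all three quantities on $\Sigma$ is then extracted exactly as in the closing paragraph of the proof of Lemma \ref{lem:fb-Jacobi-operator}.

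The main obstacle is the inductive application in the second paragraph, where the rigidity part of Theorem \ref{thm:generalized-Geroch} in dimension $n-1$ is needed to upgrade the nonnegativity of the weighted stabilized scalar curvature on $\Sigma$ to the much stronger assertions that the universal cover of $\Sigma$ is flat and that $\log\rho$ is constant. Once this is granted, the remaining conclusions reduce to the algebraic manipulation of the Gauss equation, the Jacobi equation, and the constraint $-2\Delta_{M}\varphi-|\nabla^{M}\varphi|^{2}+R_{M}\equiv 0$ along $\Sigma$, which is routine and parallels the free boundary case.
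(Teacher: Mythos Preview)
Your proposal is correct and follows essentially the same approach as the paper: apply Proposition~\ref{prop:Gauss} to obtain nonnegativity of the stabilized scalar curvature on $(\Sigma,\log\rho)$, invoke the induction hypothesis (Theorem~\ref{thm:generalized-Geroch} in dimension $n-1$) to force equality and hence flatness of the universal cover together with constancy of $\log\rho$, and then unwind the resulting identities exactly as in Lemma~\ref{lem:fb-Jacobi-operator}. The paper's own proof is terser---it simply records the key inequality and then says ``applying the induction hypothesis and a similar argument as in Lemma~\ref{lem:Jacobi-operator} yields the desired conclusions''---but your expansion of the details matches this outline step for step.
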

\begin{proof}
    Proposition \ref{prop:Gauss} implies that for $\rho=e^{\varphi}|_{\Sigma}\cdot u$,
    \begin{align*}
    &-2\lp_{\Sigma}\log \rho-|\gd^{\Sigma}\log\rho|^2+R_{\Sigma}\\
    &\ge \inf_M(-2\lp_M\varphi-|\gd^M\varphi|^2+R_M)\\
    &=0
    \end{align*}
    at each point in $\Sigma$. Applying the induction hypothesis and a similar argument as in Lemma \ref{lem:Jacobi-operator} yields the desired conclusions.
\end{proof}
By Proposition \ref{prop:closed-slicing-torus}, we can construct a closed, connected, orientable hypersurface $\Sigma\subset M$ that is area-minimizing in its homology class within $(M,e^{\frac{2}{n-1}\varphi}g)$. Moreover, this hypersurface $\Sigma$ satisfies
\[
\int_{\Sigma} \Theta_2\wedge \cdots \wedge \Theta_{n}\ne 0.
\] 
Let 
\[
        \Sigma_t=\{\exp_{x}(w(x,t)\nu_{\Sigma}(x)):x\in \Sigma\}
\]
be the foliation constructed analogously to Lemma \ref{lem:higher-foliation} for $t\in (-\delta,\delta)$, such that $\mu(t):=H_{\Sigma_t}+\la \gd^M\varphi,\nu_{\Sigma_t}\rg$ depends only on $t$. Then ,$\Sigma_t$ is homologous to $\Sigma$, and
\[
\int_{\Sigma_t} \Theta_2\wedge \cdots \wedge \Theta_{n}\ne 0.
\]

We define the lapse function $f_t:\Sigma_t \to \bR$ analogously to \eqref{eqn:lapse-function}. By choosing $\delta$ smaller if necessary, we may assume $f_t>0$ on $\Sigma_t$ for all $t\in (-\delta,\delta)$. The lapse function $f_t$ satisfies a Jacobi equation analogous to \eqref{eqn:fb-lapse-PDE}.
\begin{lemma}{\label{lem:Ht-sign-torus}}
We have 
    \[
    H_{\Sigma_t} + \langle \nabla^M \varphi, \nu_{\Sigma_t}\rangle \geq 0 \text{ for all } t \in (-\delta,0]
    \]
    and
    \[
    H_{\Sigma_t} + \langle \nabla^M \varphi, \nu_{\Sigma_t}\rangle \leq 0 \text{ for all } t \in [0,\delta).
    \]
\end{lemma}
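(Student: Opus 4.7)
The plan is to mirror the argument used for Lemma \ref{lem:Ht-sign}, with the Zhu-type systolic bound replaced by the generalized Geroch inequality established earlier in this section. The setup is identical: I would let $\mu(t) := H_{\Sigma_t} + \langle \nabla^M \varphi, \nu_{\Sigma_t}\rangle$ and argue that $\mu'(t) \leq 0$ for all $t \in (-\delta, \delta)$. Since $\Sigma = \Sigma_0$ is homologically weighted area minimizing in $(M, e^{\frac{2}{n-1}\varphi}g)$, it satisfies $H_\Sigma + \langle \nabla^M\varphi, \nu_\Sigma\rangle = 0$, so $\mu(0) = 0$. The conclusion of the lemma then follows immediately from monotonicity.

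To prove $\mu'(t) \leq 0$, I would argue by contradiction. Suppose $\mu'(t_0) > 0$ at some $t_0 \in (-\delta, \delta)$. Using compactness of $\Sigma_{t_0}$ and positivity of the lapse function $f_{t_0}$, I can select $\tau > 0$ such that $\mu'(t_0)/f_{t_0} \geq \tau$ pointwise on $\Sigma_{t_0}$. Plugging this into the Jacobi equation for $f_{t_0}$ (the torus analogue of \eqref{eqn:fb-lapse-PDE}, which has the same form since $\Sigma_{t_0}$ is closed) gives
\begin{align*}
    &-\Delta_{\Sigma_{t_0}} f_{t_0} - \ric_{M}(\nu_{\Sigma_{t_0}}, \nu_{\Sigma_{t_0}})f_{t_0} - |h_{\Sigma_{t_0}}|^2 f_{t_0} \\
    &\quad + (D^2_M \varphi)(\nu_{\Sigma_{t_0}}, \nu_{\Sigma_{t_0}})f_{t_0} - \langle \nabla^{\Sigma_{t_0}}\varphi, \nabla^{\Sigma_{t_0}} f_{t_0} \rangle \geq \tau f_{t_0}.
\end{align*}
Setting $v := e^{\varphi}|_{\Sigma_{t_0}} \cdot f_{t_0}$ and applying Proposition \ref{prop:Gauss}, together with the hypothesis $-2\Delta_M\varphi - |\nabla^M\varphi|^2 + R_M \geq 0$, yields
\[
-2\Delta_{\Sigma_{t_0}}\log v - |\nabla^{\Sigma_{t_0}}\log v|^2 + R_{\Sigma_{t_0}} \geq 2\tau > 0
\]
at every point of $\Sigma_{t_0}$.

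Now $\Sigma_{t_0}$ is a closed, connected, orientable $(n-1)$-dimensional manifold, and by construction it is homologous to $\Sigma$, so the map $(\theta_2, \ldots, \theta_n)|_{\Sigma_{t_0}} : \Sigma_{t_0} \to T^{n-1}$ has nonzero degree. The inequality part of Theorem \ref{thm:generalized-Geroch}, applied to $\Sigma_{t_0}$ with the weight function $\log v$ in place of $\varphi$, forces
\[
\inf_{\Sigma_{t_0}}\bigl(-2\Delta_{\Sigma_{t_0}}\log v - |\nabla^{\Sigma_{t_0}}\log v|^2 + R_{\Sigma_{t_0}}\bigr) \leq 0,
\]
which contradicts the strict positivity established above. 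Hence $\mu'(t) \leq 0$ throughout $(-\delta, \delta)$, and since $\mu(0) = 0$, this gives $\mu(t) \geq 0$ for $t \in (-\delta, 0]$ and $\mu(t) \leq 0$ for $t \in [0, \delta)$.

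The main point requiring care is the applicability of Theorem \ref{thm:generalized-Geroch} to $\Sigma_{t_0}$: one must confirm that the degree condition is preserved under isotopy of $\Sigma$ through the foliation (which is automatic because $\Sigma_{t_0}$ is homologous to $\Sigma$ and the forms $\Theta_i$ are closed), and that the inequality part does not require the inductive rigidity hypothesis — only the slicing-based inequality proved at the start of Section \ref{sec:geroch}, which holds in all admissible dimensions. No further obstacles arise; everything else is a transcription of the argument used in Lemma \ref{lem:Ht-sign} and Lemma \ref{lem:fb-Ht-sign}.
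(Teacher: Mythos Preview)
Your proposal is correct and follows essentially the same approach as the paper. The paper's proof is terser---it establishes the inequality $-2\Delta_{\Sigma_{t_0}}\log v - |\nabla^{\Sigma_{t_0}}\log v|^2 + R_{\Sigma_{t_0}} \geq 2\tau$ and then simply writes ``This leads to a contradiction''---while you spell out that the contradiction comes from applying the inequality part of Theorem \ref{thm:generalized-Geroch} to $(\Sigma_{t_0}, g|_{\Sigma_{t_0}}, \log v)$, which is exactly the intended mechanism.
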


\begin{proof}
    It is sufficient to show that $\mu'(t) \leq 0$ for all $t \in (-\delta,\delta)$, where $\mu(t) := H_{\Sigma_t} + \langle \nabla^M \varphi, \nu_{\Sigma_t}\rangle$.

    Suppose there exists some $t_0 \in (-\delta,\delta)$ with $\mu'(t_0) > 0$. Choose a constant $\tau > 0$ such that 
    \[
    \frac{\mu'(t_0)}{f_{t_0}} \geq \tau
    \]
    at every point of $\Sigma_{t_0}$. Then, a similar argument as in Lemma \ref{lem:Ht-sign} shows that the function \(v := e^{\varphi}|_{\Sigma_{t_0}} \cdot f_{t_0}\) satisfies
\begin{align*}
    &-2\Delta_{\Sigma_{t_0}} \log v - |\nabla^{\Sigma_{t_0}} \log v|^2 + R_{\Sigma_{t_0}} \\
    &\geq -2\Delta_M \varphi - |\nabla^M \varphi|^2 + R_M + 2\tau \\
    &\geq 2\tau
\end{align*}
at each point on \(\Sigma_{t_0}\). 
This leads to a contradiction.
\end{proof}

\begin{corollary}\label{cor:local-isometry-torus}
    If \(\Sigma \subset M\) is a closed, connected, orientable hypersurface that is homologically area-minimizing in \((M, e^{\frac{2}{n-1}\varphi} g)\), and satisfies
    \[
    \int_{\Sigma} \Theta_2 \wedge \cdots \wedge \Theta_{n} \neq 0,
    \]
    then there exists a neighborhood \(U\) of \(\Sigma\) that is isometric to a Riemannian product, and \(\varphi\) is constant on \(U\).
\end{corollary}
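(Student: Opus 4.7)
The plan is to reproduce, in the torus setting, the same chain of reasoning used to establish Corollary \ref{cor:fb-local-isometry} and Corollary \ref{cor:local-isometry}. Since $\Sigma$ is homologically area-minimizing in the weighted metric, it is in particular stable, so Lemma \ref{lem:inductive-rigidity-torus} applies: $\Sigma$ is totally geodesic, $\nabla^M\varphi = 0$ along $\Sigma$, $\ric_M(\nu_\Sigma,\nu_\Sigma) = 0$, $(D^2_M\varphi)(\nu_\Sigma,\nu_\Sigma)=0$, and $\varphi|_\Sigma$ is constant. Because of these vanishings, the Jacobi operator that appears in the weighted mean curvature equation reduces on $\Sigma$ to $-\Delta_\Sigma$, whose cokernel on functions of zero average is trivial. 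An implicit function theorem argument identical to that of Lemma \ref{lem:higher-foliation} (now only requiring one scalar constraint since there is no boundary) then produces $\delta>0$ and a smooth foliation $\Sigma_t=\{\exp_x(w(x,t)\nu_\Sigma(x)):x\in\Sigma\}$ for $t\in(-\delta,\delta)$ on which $\mu(t):=H_{\Sigma_t}+\langle\nabla^M\varphi,\nu_{\Sigma_t}\rangle$ depends only on $t$, with lapse function $f_t>0$.

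Next I would use Lemma \ref{lem:Ht-sign-torus}, which gives $\mu(t)\ge 0$ for $t\in(-\delta,0]$ and $\mu(t)\le 0$ for $t\in[0,\delta)$. The first variation formula for weighted area reads
\[
\frac{d}{dt}\int_{\Sigma_t}e^{\varphi}=\int_{\Sigma_t}\mu(t)\,e^{\varphi}f_t,
\]
so $\int_{\Sigma_t}e^{\varphi}\le\int_{\Sigma}e^{\varphi}$ for all $t\in(-\delta,\delta)$. On the other hand, each $\Sigma_t$ is homologous to $\Sigma$, so the area-minimizing property of $\Sigma$ in $(M,e^{\frac{2}{n-1}\varphi}g)$ forces the reverse inequality. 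Hence $\int_{\Sigma_t}e^{\varphi}$ is constant in $t$, and each leaf $\Sigma_t$ is itself a weighted area-minimizer in its homology class. Since $\Sigma_t$ still satisfies $\int_{\Sigma_t}\Theta_2\wedge\cdots\wedge\Theta_n\ne 0$, Lemma \ref{lem:inductive-rigidity-torus} applies to every leaf: each $\Sigma_t$ is totally geodesic, $\nabla^M\varphi=0$, $\ric_M(\nu_{\Sigma_t},\nu_{\Sigma_t})=0$, $(D^2_M\varphi)(\nu_{\Sigma_t},\nu_{\Sigma_t})=0$, and $\varphi|_{\Sigma_t}$ is constant.

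Differentiating $\int_{\Sigma_t}e^{\varphi}\equiv\int_{\Sigma}e^{\varphi}$ and using $\mu(t)$ constant on $\Sigma_t$ and $f_t>0$ yields $\mu(t)\equiv 0$, so $\mu'(t)\equiv 0$. Substituting the vanishings just obtained into the Jacobi equation for $f_t$ analogous to \eqref{eqn:fb-lapse-PDE} reduces it to $\Delta_{\Sigma_t}f_t=0$ on the closed manifold $\Sigma_t$, forcing $f_t$ to be constant on each leaf. Thus the deformation is by a constant speed along a parallel unit normal (compare \cite{Ambrozio, BBN}), the neighborhood $U=\bigcup_t \Sigma_t$ is isometric to the Riemannian product $\Sigma\times(-\delta,\delta)$, and $\varphi$, being constant on each leaf with $\nabla^M\varphi=0$ throughout $U$, is constant on $U$.

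The only step that requires real work beyond citation is the construction of the foliation; however, this is a direct transcription of Lemma \ref{lem:higher-foliation} in the closed case without boundary and poses no new difficulty once Lemma \ref{lem:inductive-rigidity-torus} provides the linearization. Everything else is a formal matching of Proposition \ref{prop:weighted-area} and Corollary \ref{cor:leaf-rigidity} to the torus setting.
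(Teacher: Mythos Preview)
Your proposal is correct and follows essentially the same route as the paper, which simply says the proof is analogous to that of Corollary \ref{cor:local-isometry} (and hence to Corollary \ref{cor:fb-local-isometry}): construct the CMC foliation, use Lemma \ref{lem:Ht-sign-torus} and the first variation to show each leaf is a weighted minimizer, apply Lemma \ref{lem:inductive-rigidity-torus} leafwise, reduce the Jacobi equation to $\Delta_{\Sigma_t}f_t=0$ so $f_t$ is constant, and conclude the product splitting with $\varphi$ constant. You have spelled out exactly the chain the paper leaves implicit.
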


The proof of Corollary \ref{cor:local-isometry-torus} is analogous to that of Corollary \ref{cor:local-isometry}. Finally, we define the map \(\Phi : \Sigma \times \mathbb{R} \to M\) by
\[
\Phi(x, t) = \exp_x(t \nu_{\Sigma}(x)).
\]
Analogous to Theorem \ref{thm:covering}, we can show that \(\varphi\) is constant on \(M\) and that the map \(\Phi\) is a local isometry. From here, the proof of Theorem \ref{thm:generalized-Geroch} proceeds in the same manner as the proof of Theorem \ref{thm:stabilized-Zhu-rigidity}.

\section{Proof of Theorem \ref{thm:generalized-Geroch-spin}}{\label{sec:spin}}
In this section, we assume \(M^n\) is closed, connected, spin, and admits a degree non-zero map to the \(n\)-dimensional torus \(T^n\). We first prove the inequality of Theorem \ref{thm:generalized-Geroch-spin}.  

\begin{proposition}\label{prop:strict-inequality}
    Let \(g\) be a Riemannian metric on \(M\) and let \(\varphi\) be a smooth function on \(M\). Then,
    \[
        \inf_{M} \left( -2\lp_M \varphi - |\gd^M \varphi|^2 + R_M \right) \leq 0.
    \]
\end{proposition}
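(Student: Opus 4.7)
The plan is to use the $T^N$-symmetrization described in the introduction to reduce to the classical Gromov--Lawson theorem for spin manifolds. The key observation is that the assumption on $M$ is preserved and even strengthened when we pass to a high-dimensional warped product.

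Concretely, I would argue by contradiction. Suppose
\[
\inf_M\bigl(-2\Delta_M\varphi-|\nabla^M\varphi|^2+R_M\bigr)=\alpha>0.
\]
For each positive integer $N$, consider $\tilde M:=M\times T^N$ equipped with the warped product metric $\tilde g:=g+e^{\frac{2}{N}\varphi}g_{T^N}$, where $g_{T^N}$ is a flat metric on the torus. By the formula recalled in the introduction,
\[
R_{\tilde M}=R_M-2\Delta_M\varphi-\tfrac{N+1}{N}|\nabla^M\varphi|^2=\bigl(-2\Delta_M\varphi-|\nabla^M\varphi|^2+R_M\bigr)-\tfrac{1}{N}|\nabla^M\varphi|^2.
\]
Since $M$ is compact, $|\nabla^M\varphi|^2$ is uniformly bounded, so choosing $N$ large enough yields $R_{\tilde g}\geq \alpha/2>0$ on all of $\tilde M$.

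Next I would verify that $\tilde M$ satisfies the hypotheses of the Gromov--Lawson theorem for enlargeable spin manifolds. First, $\tilde M$ is closed, connected and spin, being the product of the closed spin manifold $M$ with the closed spin manifold $T^N$. Second, if $f:M\to T^n$ is a smooth map of non-zero degree, then
\[
F:=f\times \mathrm{id}_{T^N}:\tilde M=M\times T^N\longrightarrow T^n\times T^N=T^{n+N}
\]
is a smooth map of non-zero degree $\deg F=\deg f\neq 0$. By the classical Gromov--Lawson theorem \cite{GL-1,GL-2}, no closed spin manifold of dimension $n+N$ admitting a non-zero degree map to $T^{n+N}$ can carry a metric of positive scalar curvature. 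This contradicts $R_{\tilde g}>0$, completing the proof.

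The argument has essentially no main obstacle; the only subtlety is to make the error term $\tfrac{1}{N}|\nabla^M\varphi|^2$ arising from the warped product computation small relative to $\alpha$, which is immediate from compactness of $M$. The key structural input is precisely that $M$ is spin and admits a non-zero degree map to a torus, so that $\tilde M=M\times T^N$ inherits the same properties and falls within the scope of the Dirac operator obstruction, bypassing the dimension restriction $n\leq 7$ that arises in the minimal slicing approach of Theorem \ref{thm:generalized-Geroch}.
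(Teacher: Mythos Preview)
Your argument is correct and is essentially identical to the paper's own proof: both proceed by contradiction, pass to the warped product $(M\times T^N,\,g+e^{2\varphi/N}g_{T^N})$, choose $N$ large enough to make the scalar curvature strictly positive, and then invoke the Gromov--Lawson obstruction for spin manifolds admitting a non-zero degree map to a torus. If anything, you have spelled out a couple of details (the explicit map $f\times\mathrm{id}$ to $T^{n+N}$ and the preservation of the spin condition under products) that the paper leaves implicit.
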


\begin{proof}
    Suppose there exists a Riemannian metric \(g\) on \(M\) and a smooth function \(\varphi \in C^{\infty}(M)\) such that
    \[
        -2\lp_M \varphi - |\gd^M \varphi|^2 + R_M > 0.
    \]
    Choose a sufficiently large positive integer \(N\) such that
    \[
        -2\lp_M \varphi - \frac{N+1}{N} |\gd^M \varphi|^2 + R_M > 0.
    \]
    This implies that the Riemannian manifold \((M \times T^{N}, g + e^{\frac{2\varphi}{N}} g_{T^N})\) has strictly positive scalar curvature, where \(g_{T^N}\) is a flat metric on the \(N\)-dimensional torus. However, \(M \times T^N\) is a spin manifold that admits a degree non-zero map to \(T^{n+N}\). This leads to a contradiction by the work of Gromov and Lawson \cite{GL-2}.
\end{proof}

We are now ready to prove Theorem \ref{thm:generalized-Geroch-spin}. We evolve \(g\) by the Ricci flow and \(\varphi\) by the heat equation
\[
    \begin{cases}
        \frac{\partial }{\partial t}g(t) = -2\ric_{g(t)}, & g(0) = g, \\
        \frac{\partial }{\partial t}\varphi(t) = \lp_{g(t)} \varphi(t), & \varphi(0) = \varphi.
    \end{cases}
\]
Corollary \ref{cor:rf-monotonicity} implies that the quantity \(S := -2\lp_{g(t)} \varphi(t) - |d \varphi|_{g(t)}^2 + R_{g(t)}\) satisfies the equation
\[
    \frac{\partial}{\partial t} S - \lp S = 2|\ric - D^2 \varphi|^2.
\]
By the strong maximum principle and Proposition \ref{prop:strict-inequality}, we must have \(\ric_g = D^2 \varphi\) and \(-2\lp_g \varphi - |\gd^M \varphi|_g^2 + R_g = 0\) at each point of \(M\). In particular, we must have \(-\lp_g \varphi - |\gd^M \varphi|_g^2 = 0\) identically. Integrating over \(M\) implies that \(\varphi\) is constant. Hence, the metric \(g\) must be flat.

\appendix
\section{Evolution of Stabilized Scalar Curvature under Ricci Flow Coupled with the Heat Equation}\label{sec:RF}
In this section, we show that the infimum of the stabilized scalar curvature is monotone under the Ricci flow coupled with the heat equation. Let \((M^n, g)\) be a closed Riemannian manifold evolved by the Ricci flow
\[
    \frac{\partial g(t)}{\partial t} = -2\ric_{g(t)},
\]
and let \(\varphi(x,t): M \to \mathbb{R}\) be a time-dependent smooth function. We define
\[
    S = 2\Delta_{g(t)} \varphi(t) - |\nabla \varphi|_{g(t)}^2 + R_{g(t)}
\]
to be the stabilized scalar curvature at time \(t\). Note that we are using a slightly different sign convention here compared to other parts of the article, which is essentially equivalent by replacing \(\varphi\) with \(-\varphi\) in the definition. We refer the reader to \cite{perelman2002entropyformularicciflow} and \cite{coupled-flow} for general results on Ricci flows coupled with other parabolic equations.
\begin{lemma}
    We have
    \begin{align*}{\label{eqn:monotone-stablized-scalar}}
        &\frac{\p S}{\p t}-\lp S\\
        &=2|\ric+D^2\varphi|^2+2\lp \left(\frac{\p}{\p t}\varphi-\lp\varphi\right)-2\p^i \varphi\p_i \left(\frac{\p}{\p t}\varphi-\lp\varphi\right).
    \end{align*}
\end{lemma}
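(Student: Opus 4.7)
The strategy is a direct computation of the three building blocks of $S = 2\Delta_{g(t)}\varphi - |\nabla\varphi|_{g(t)}^2 + R_{g(t)}$, followed by a comparison of $\partial_t S$ with $\Delta S$. Throughout I will use the standard Ricci flow identities $\partial_t g_{ij} = -2R_{ij}$, $\partial_t g^{ij} = 2R^{ij}$, and $\partial_t \Gamma_{ij}^k = -g^{kl}(\nabla_i R_{jl} + \nabla_j R_{il} - \nabla_l R_{ij})$, together with Hamilton's formula $\partial_t R = \Delta R + 2|\ric|^2$.

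First I would compute $\partial_t |\nabla\varphi|^2 = 2\ric(\nabla\varphi,\nabla\varphi) + 2\langle\nabla\varphi, \nabla \partial_t\varphi\rangle$, which is immediate from $\partial_t g^{ij} = 2R^{ij}$. Next, I would compute $\partial_t \Delta\varphi$. Writing $\Delta\varphi = g^{ij}\nabla_i\nabla_j\varphi$ and using the evolution of the Christoffel symbols, the Ricci-gradient terms that arise from $\partial_t \Gamma_{ij}^k$ take the form $g^{ij}g^{kl}(\nabla_i R_{jl} + \nabla_j R_{il} - \nabla_l R_{ij})\partial_k \varphi$, which vanishes by the contracted second Bianchi identity $g^{ij}\nabla_i R_{jl} = \tfrac12 \nabla_l R$. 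The surviving contribution is $\partial_t \Delta \varphi = 2\langle \ric, D^2 \varphi\rangle + \Delta(\partial_t \varphi)$.

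Then I would expand $\Delta S = 2\Delta(\Delta \varphi) - \Delta |\nabla\varphi|^2 + \Delta R$, with Bochner's formula supplying $\Delta|\nabla \varphi|^2 = 2|D^2\varphi|^2 + 2\langle \nabla\varphi, \nabla\Delta\varphi\rangle + 2\ric(\nabla\varphi,\nabla\varphi)$. Subtracting $\Delta S$ from $\partial_t S$, the $\Delta R$ terms cancel, the two copies of $\ric(\nabla\varphi,\nabla\varphi)$ cancel, and the $\Delta(\Delta\varphi)$ and $\langle \nabla\varphi, \nabla\Delta\varphi\rangle$ terms combine with $\Delta(\partial_t\varphi)$ and $\langle \nabla\varphi, \nabla\partial_t\varphi\rangle$ to produce exactly $2\Delta(\partial_t\varphi - \Delta\varphi) - 2\langle\nabla\varphi, \nabla(\partial_t\varphi - \Delta\varphi)\rangle$. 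What remains from the curvature terms is $2|\ric|^2 + 4\langle \ric, D^2\varphi\rangle + 2|D^2\varphi|^2 = 2|\ric + D^2\varphi|^2$, completing the identity.

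The only genuinely delicate piece is the computation of $\partial_t \Delta\varphi$: one has to track the time derivative of the Christoffel symbols and verify that the resulting Ricci-gradient term collapses via the contracted Bianchi identity. Every other step is bookkeeping, and the final recombination is just matching squares; the signs work out because $S$ uses $+2\Delta\varphi$ here (rather than the $-2\Delta\varphi$ convention used earlier in the paper), which is precisely what makes $\ric + D^2\varphi$ (not $\ric - D^2\varphi$) the natural completion of the square.
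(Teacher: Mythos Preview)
Your proposal is correct and follows essentially the same route as the paper: both compute $(\partial_t-\Delta)$ of $2\Delta\varphi$, $|\nabla\varphi|^2$, and $R$ separately (via the commutator $\partial_t\Delta\varphi-\Delta\partial_t\varphi=2\langle\ric,D^2\varphi\rangle$, Bochner, and Hamilton's formula) and then complete the square. The only difference is that you supply a derivation of the commutator identity via the Christoffel evolution and contracted Bianchi, whereas the paper simply cites it as well known.
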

\begin{proof}
 It is also well-known that for any function $\varphi$, the following holds:
    \[
    \frac{\p}{\p t}\lp\varphi-\lp\frac{\p}{\p t}\varphi=2\ric^{ij}D^2_{i,j}\varphi.
    \]
    This implies
    \[
   \frac{\p}{\p t}\lp\varphi-\lp\lp\varphi =\lp \left(\frac{\p}{\p t}\varphi-\lp\varphi\right)+2\ric^{ij}D^2_{i,j}\varphi.
    \]
    Additionally, the Bochner formula gives
    \begin{align*}
        &\frac{\p}{\p t}|d\varphi|^2-\lp|d\varphi|^2\\
        &=2\p^i \varphi\p_i\frac{\p}{\p t}\varphi+2\ric_{ij}\p^i\varphi \p^j \varphi\\
        &-2\p^i \varphi\p_i\lp \varphi-2|D^2\varphi|^2-2\ric_{ij}\p^i \varphi\p^j\varphi \\
        &=2\p^i \varphi\p_i \left(\frac{\p}{\p t}\varphi-\lp\varphi\right)-2|D^2\varphi|^2.
    \end{align*}
    Finally, recall that the evolution of the scalar curvature along the Ricci flow is given by
    \[
   \frac{\p}{\p t}R-\lp R=2|\ric|^2.
    \]
    Combining these three identities, we obtain
    \begin{align*}
        &\frac{\p}{\p t}-\lp S\\
        &=4\ric^{ij}D^2_{i,j}\varphi+2|D^2\varphi|^2+2|\ric|^2\\
        &+2\lp \left(\frac{\p}{\p t}\varphi-\lp\varphi\right)-2\p^i \varphi\p_i \left(\frac{\p}{\p t}\varphi-\lp\varphi\right)\\
        &=2|\ric+D^2\varphi|^2+2\lp \left(\frac{\p}{\p t}\varphi-\lp\varphi\right)-2\p^i \varphi\p_i \left(\frac{\p}{\p t}\varphi-\lp\varphi\right).
    \end{align*}
    This completes the proof.
\end{proof}
\begin{corollary}\label{cor:rf-monotonicity}
    If we evolve \(\varphi\) by the heat equation
    \[
            \frac{\partial }{\partial t}\varphi(t) = \Delta_{g(t)} \varphi(t),
    \]
    then 
    \[
        \frac{\partial S}{\partial t} - \Delta S = 2|\ric + D^2 \varphi|^2.
    \]
\end{corollary}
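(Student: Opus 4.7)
The plan is to apply the preceding lemma directly to the coupled system. That lemma already establishes, for an arbitrary time-dependent smooth function $\varphi(x,t)$ evolving along the Ricci flow, the identity
\begin{align*}
\frac{\partial S}{\partial t}-\Delta S
&= 2|\ric+D^2\varphi|^2 + 2\Delta\!\left(\frac{\partial}{\partial t}\varphi-\Delta\varphi\right) \\
&\quad - 2\,\partial^i\varphi\,\partial_i\!\left(\frac{\partial}{\partial t}\varphi-\Delta\varphi\right).
\end{align*}
The corollary is the specialization where $\varphi$ satisfies the heat equation $\partial_t\varphi=\Delta_{g(t)}\varphi$.

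Under this hypothesis the quantity $\partial_t\varphi-\Delta\varphi$ vanishes identically on $M\times[0,T)$. Hence both correction terms on the right-hand side of the lemma's formula, namely the Laplacian of $\partial_t\varphi-\Delta\varphi$ and the pairing of $\nabla\varphi$ with its gradient, vanish pointwise. What remains is exactly
\[
\frac{\partial S}{\partial t}-\Delta S = 2|\ric+D^2\varphi|^2,
\]
which is the asserted evolution equation.

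There is essentially no obstacle to carrying this out. The only point worth a brief remark is that the substitution is legitimate on the level of smooth functions: since $M$ is closed and the metrics $g(t)$ are smooth along the Ricci flow, standard parabolic regularity for the linear heat equation on $(M,g(t))$ ensures that $\varphi(t)$ is smooth in space and time, so differentiating the relation $\partial_t\varphi-\Delta\varphi\equiv 0$ to conclude that $\Delta(\partial_t\varphi-\Delta\varphi)$ and $\partial_i(\partial_t\varphi-\Delta\varphi)$ both vanish is fully justified. Accordingly, the corollary is an immediate consequence of the lemma, and no additional computation is required.
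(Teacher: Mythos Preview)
Your proposal is correct and is precisely the intended argument: the paper states this corollary immediately after the lemma without proof, since it follows by substituting $\partial_t\varphi=\Delta\varphi$ into the lemma's identity so that the two correction terms vanish. Your additional remark on smoothness is harmless and justified, though the paper evidently regards this as obvious.
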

\begin{remark}\label{rmk:Perelman}
    Corollary \ref{cor:rf-monotonicity} can be compared to Perelman's calculation of the \(\cF\)-functional in \cite{perelman2002entropyformularicciflow}. Instead of solving the forward equation, Perelman considers \(\varphi\) as the solution to the backward heat equation
    \[
        \frac{\partial \varphi}{\partial t} + \lp \varphi - |\gd \varphi|^2 + R = 0,
    \]
    and shows that the quantity \(S e^{-\varphi}\) is a supersolution to the adjoint heat equation
    \[
        \left( \frac{\partial}{\partial t} + \lp - R \right) (S e^{-\varphi}) = 2 e^{-\varphi} |\ric + D^2 \varphi|^2 \geq 0.
    \]
    The monotonicity of the \(\cF\)-functional follows from integrating this inequality over \(M\).
\end{remark}


\end{document}